\numberwithin{equation}{section}
\theoremstyle{definition}
\newtheorem{defn}[equation]{Definition}
\newtheorem{definition-proposition}[equation]{Definition/Proposition}
\newtheorem{assumption}[equation]{Assumptions}
\newtheorem{convention}[equation]{Convention}
\theoremstyle{remark}
\newtheorem{rem}[equation]{Remark}
\newtheorem{remark}[equation]{Remark}
\newtheorem{example}[equation]{Example}
\theoremstyle{plain}
\newtheorem{thm}[equation]{Theorem}
\newtheorem{lem}[equation]{Lemma}
\newtheorem{proposition}[equation]{Proposition}
\newtheorem{cor}[equation]{Corollary}
\newcommand{\lra}{\longrightarrow}
\newcommand{\bN}{\mathbb{N}}
\newcommand{\bR}{\mathbb{R}}
\newcommand{\bZ}{\mathbb{Z}}
\newcommand{\cC}{\mathcal{C}}
\newcommand{\cD}{\mathcal{D}}
\newcommand{\cB}{\mathcal{B}}
\newcommand{\cA}{\mathcal{A}}
\newcommand{\cL}{\mathcal{L}}
\newcommand{\Aut}{\operatorname{Aut}}
\DeclareMathOperator*{\colim}{colim}
\newcommand{\Sing}{\mathrm{Sing}}
\newcommand{\eps}{\epsilon}
\newcommand{\inter}[1]{\mathrm{int}(#1)} 
\newcommand{\id}{\mathrm{Id}}
\newcommand{\inc}{\operatorname{inc}}
\newcommand{\lift}{\mathrm{Lift}}
\newcommand{\inj}{\mathrm{inj}}
\newcommand{\Top}{\mathrm{Top}}
\newcommand{\Set}{\mathrm{Set}}
\newcommand{\Fun}{\mathrm{Fun}}
\newcommand{\Mor}{\operatorname{Mor}}
\newcommand{\Ob}{\operatorname{Ob}}
\newcommand{\hofib}{\operatorname{hofib}}
\newcommand{\norm}[1]{\| #1 \|}
\newcommand{\abs}[1]{| #1 |}
\newcommand{\si}{\mathrm{s}}
\newcommand{\se}{\mathrm{ss}}
\newcommand{\bis}{\mathrm{bis}}
\title[Semi-simplicial spaces]{Semi-simplicial spaces}
\author{Johannes Ebert}
\address{Mathematisches Institut, Universit\"at M\"unster\\
Einsteinstra{\ss}e 62\\
48149 M\"unster\\
Germany}
\email{johannes.ebert@uni-muenster.de}
\author{Oscar Randal-Williams}
\address{DPMMS\\
Centre for Mathematical Sciences\\
Wilberforce Road\\
Cambridge CB3 0WB\\
UK}
\email{or257@cam.ac.uk}
\date{\today}
\begin{document}

\begin{abstract}
This is an exposition of homotopical results on the geometric realization of semi-simplicial spaces. We then use these to derive basic foundational results about classifying spaces of topological categories, possibly without units. The topics considered include: fibrancy conditions on topological categories; the effect on classifying spaces of freely adjoining units; approximate notions of units; Quillen's Theorems A and B for non-unital topological categories; the effect on classifying spaces of changing the topology on the space of objects; the Group-Completion Theorem. 
\end{abstract}

\maketitle

\setcounter{tocdepth}{1}
\tableofcontents

Semi-simplicial spaces play an important role in the theory of moduli spaces of manifolds, beginning with \cite{GRW1} and \cite{RWStab}, and continuing in \cite{GRW14}, \cite{GRWHomStab2}, \cite{GRWHomStab1}. In those papers, a number of key properties of semi-simplicial spaces are used. While such results are mostly well-known to experts, a consistent exposition seems to be missing. The first goal of the present note is to give such an exposition; we hope that it helps to make the basic technology of those papers more understandable to the non-expert. Results which are repeatedly used in \cite{GRW1}, \cite{GRW14}, \cite{GRWHomStab2} and \cite{GRWHomStab1} are stated in this paper as Theorem \ref{thm:levelwiseequivalence}, Theorem \ref{lem:simplicial-hocartesianness} and Lemma \ref{lem:connecttivity-of-semisimplicial-amp}. (One piece of semi-simplicial technology from those papers is not explained here, namely \cite[Theorem 6.2]{GRW14} and its 
elaboration \cite{GRW14Err} which has been abstracted in \cite[Theorem 6.4]{BotPerl}. But these are explained in full detail in the cited papers.)

The second goal of this note is to establish basic foundational results about classifying spaces of topological categories, possibly without units (we define these in Section \ref{sec:TopCat}). The topics we will consider are: fibrancy conditions on topological categories; the effect on classifying spaces of freely adjoining units to a non-unital topological category; approximate notions of units; Quillen's Theorems A and B; the effect on classifying spaces of changing the topology on the space of objects of a topological category. In order to prove Quillen's Theorems A and B in this setting, in Section \ref{sec:Resolution} we describe a bi-semi-simplicial resolution of a semi-simplicial map induced by a functor between topological categories. We shall use our version of Quillen's Theorem B (whose formulation is a mild generalisation, due to Blumberg--Mandell \cite{BluMan}, of the usual one) in a crucial way in our 
forthcoming paper \cite{ERW17}: clarifying the details of this foundational result has been our main motivation for writing this note.

The third goal of this note is to give a proof of the Group-Completion Theorem, which plays a crucial role in \cite{GMTW}, \cite{GRW1}, and \cite{GRW14}. The formulation of this theorem which is most convenient for geometric applications is due to McDuff--Segal \cite{McDuffSeg}, but their paper elides many details. A detailed exposition of McDuff and Segal's proof has been given by Miller--Palmer \cite{MilPal}, which in combination with \cite{RWGC} proves a stronger result than the classical formulation. There are several other proofs of the group-completion theorem, due to Jardine \cites{Jardine, Goerss-Jardine},  Moerdijk \cite{Moerdijk}, and Pitsch--Scherer \cite{PitschScherer}. Our proof avoids the point-set topological subtleties of \cite{McDuffSeg}, and the model categorical subtleties of \cite{Jardine, Goerss-Jardine, Moerdijk, PitschScherer}; we think it is as elementary as possible.

Finally, we give an elementary proof that for two simplicial spaces there is a natural weak equivalence $\norm{(X \times Y)_\bullet} \simeq \norm{X_\bullet} \times \norm{Y_\bullet}$ (this can be extracted from Segal's paper \cite{Segal}). This fact has been implicitly used at some places in the literature.
\vspace{2ex}

We have attempted to make this note as self-contained as possible, and a large portion can be read with relatively little background knowledge. We assume that the reader is familiar with the language of homotopy theory and with the definition of a simplicial object and the basic examples, though we repeat the definitions. Some key results on simplicial sets, namely Lemma \ref{lem:comparison-fat-to-thin-simplicialset} and \ref{lem:RealSing} are used without proof, but in both cases there are easily accessible references. For the results of Section \ref{sec:homotopy-geoemtric-realization}, we use a fairly simple but powerful local-to-global principle for highly connected maps \cite[Theorem 6.7.9]{tomDieck}, and either Mather's first cube theorem \cite{Mather} or the Dold--Thom criterion for quasifibrations \cite{DoldThom}. In two proofs (of Theorem \ref{thm:SegalDeltaSpace} and Lemma \ref{groupcompletionproof-lemma1}) we use spectral sequences. Section \ref{sec:products} is almost elementary.

\subsection*{Acknowledgements}

J.\ Ebert was partially supported by the SFB 878. O.\ Randal-Williams was supported by EPSRC grant number EP/M027783/1.

\section{Semi-simplicial spaces}\label{sec:semisimplicialspaces}

\subsection{(Semi-)simplicial objects}

For $n \in \bN_0$, let us write $[n]$ for the linearly ordered set $ \{0 < 1 < \ldots < n\}$. Let $\Delta$ denote the category with objects the linearly ordered sets $[n]$ with $n \in \bN_0$, and with morphisms $[n] \to [m]$ the monotone functions, with composition given by composition of functions. Let $\Delta_{\inj} \subset \Delta$ denote the subcategory contains all objects, but only the \emph{injective} monotone maps. 

\begin{defn}
A \emph{simplicial object} in a category $\cC$ is a functor $\Delta^{op} \to \cC$. A \emph{semi-simplicial object} in $\cC$ is a functor $\Delta_{\inj}^{op} \to C$. We denote such a (semi-)simplicial object by $X_\bullet$, and write $X_p = X_\bullet([p])$.

A morphism of (semi-)simplicial objects is a natural transformation of functors. In this way the simplicial objects in a category $\cC$ form a category $\si \cC$, and the semi-simplicial objects form a category $\se \cC$. There is a functor $F:\si C \to \se C$, defined by restricting functors along $\Delta^{op}_{\inj} \subset \Delta^{op}$.
\end{defn}

While the description of (semi-)simplicial objects given above is convenient for certain manipulations, it is often convenient to also have a more hands-on description. The datum of a semi-simplicial object in $\cC$ is equivalent to giving a collection of objects $X_p \in \Ob(\cC)$, $p \geq 0$, together with morphisms $d_i: X_p \to X_{p-1}$ ($0 \leq i \leq p$), called \emph{face maps} which satisfy 
\[
d_i d_j = d_{j-1} d_i \,\,\text{ if }\,\, i < j.
\]
The morphism $d_i$ is associated to the unique injective monotone map $[p-1] \to [p]$ which does not hit $i$: any monotone injective map can be written as a composition of such maps, uniquely up to the identity above.

Similarly, a simplicial object in $\cC$ is given by objects $X_p \in \Ob (\cC)$, together with face maps $d_i: X_p \to X_{p-1}$ ($ 0 \leq i \leq p$) and \emph{degeneracy maps} $s_i: X_p \to X_{p+1}$ ($0 \leq i \leq p$) which satisfy the \emph{simplicial identities}
\begin{align*}
d_i d_j &= d_{j-1} d_i \,\,\text{ if }\,\, i < j,\\
s_i s_j &= s_{j+1} s_j \,\,\text{ if }\,\, i \leq  j,\\
d_i s_j &= s_{j-1} d_i \,\,\text{ if }\,\, i < j, \\
d_j s_j &= d_{j+1} s_j = \id, \\
d_i s_j &= s_j d_{i-1}\,\,\text{ if }\,\, i > j+1.
\end{align*}

In this paper, we usually think of simplicial objects as semi-simplicial objects which are equipped with additional structure, namely the degeneracy maps.

\begin{example}
The \emph{simplicial $p$-simplex} $\Delta^p_\bullet$ is the simplicial set $\Delta^p_q := \Delta ([q],[p])$. For $p=0$, one obtains $\Delta^0_q =*$, which is a terminal object in the category $\si \Set$. 

The \emph{semi-simplicial $p$-simplex} $\nabla^p_\bullet$ is the semi-simplicial set $\nabla_q^p := \Delta_{\inj} ([q],[p])$. It only has simplices in degrees $\leq p$. Note that $\nabla_q^0$ is a point when $q=0$ and empty when $q>0$.
\end{example}

\begin{defn}
An \emph{augmented semi-simplicial object} in $\cC$ is a triple $(X_{\bullet},X_{-1},\eps_\bullet)$, with $X_{-1} \in \Ob (\cC)$, $X_\bullet \in \Ob (\se \cC)$ a semi-simplicial object and morphisms $\eps_p:X_p\to X_{-1}$ such that $\eps_p \circ d_i =\eps_{p-1}$ for all $p\geq 1$ and all $0 \leq i \leq p$.
\end{defn}

Equivalently, it is a semi-simplicial object in the over-category $\cC/X_{-1}$ (see Section \ref{subsec:softunits} for a reminder of this notion).

\subsubsection*{Bi-(semi-)simplicial objects}

As the (semi-)simplicial objects in $\cC$ form a category in their own right, we may consider (semi-)simplicial objects in this category. By adjunction, this leads to the following definition.

\begin{defn}
A \emph{bi-simplicial object} in $\cC$ is a functor $X_{\bullet,\bullet}:(\Delta \times \Delta)^{op} \to \cC$ and a \emph{bi-semi-simplicial object} in $\cC$ is a functor $X_{\bullet,\bullet} : (\Delta_{\inj}\times \Delta_{\inj})^{op} \to \cC$. In either case we write $X_{p,q} = X_{\bullet, \bullet}([p], [q])$.
\end{defn}

One can think of a bi-simplicial object in $\cC$ as a simplicial object in $\si \cC$ in two ways: namely as
\[
[p] \mapsto ([q] \mapsto X_{p,q}) \quad \text{and} \quad [q] \mapsto ([p] \mapsto X_{p,q}),
\]
and similarly for bi-semi-simplicial objects. The \emph{diagonal} simplicial object $\delta X_\bullet$ is the composition of $X_{\bullet,\bullet}$ with the diagonal functor $\Delta \to \Delta \times \Delta$; similarly for bi-semi-simplicial objects. Hence $\delta X_p = X_{p,p}$.

If the category $\cC$ has finite products, we can form the \emph{exterior} product $X_\bullet \otimes Y_\bullet$ of two simplicial objects $X_\bullet, Y_\bullet \in \si \cC$; it is 
\[
(X_\bullet \otimes Y_\bullet)([p], [q]) := X([p]) \times Y([q]).
\]
The \emph{interior} product of two simplicial objects is then $X_\bullet \times Y_\bullet:=\delta (X_\bullet \otimes Y_\bullet)$, concretely 
\[
(X_\bullet \times Y_\bullet)([p]) := X([p]) \times Y([p]).
\]
Parallel notions can be defined for semi-simplicial objects, but are not very useful.

\subsubsection*{Freely adding degeneracies}

If the category $\cC$ has finite coproducts, then the forgetful functor $F: \si \cC \to \se \cC$ has a left adjoint $E$, which has the following explicit description. For a semi-simplicial object $X_\bullet \in  \se \cC$, define
\[
EX_p := \coprod_{\alpha:[p] \twoheadrightarrow [q] } X_q.
\]
Let $\beta:[r] \to [p]$ be a morphism in $\Delta$. For a surjection $\alpha: [p] \to [q]$, we factor $\alpha \circ \beta:[r] \to [q]$ as $ [r] \stackrel{\alpha'}{\twoheadrightarrow} [s] \stackrel{\beta'}{\rightarrowtail} [q]$, and define $\beta: EX_p \to EX_r$ on the summand indexed by $\alpha$ as the map $\beta'^*:X_q \to X_s \subset EX_r$.

From this adjunction, we obtain the counit $c:EFY_\bullet \to Y_\bullet$ for each $Y_\bullet \in \si \cC$, and the unit $u: X_\bullet \to F E X_\bullet$ for each $X_\bullet \in \se \cC$. Concretely, the counit is the map 
\[
EFY_p = \coprod_{\alpha:[p] \twoheadrightarrow [q] } Y_q \lra Y_p
\]
which on the summand indexed by $\alpha$ is given by $\alpha^*$. Similarly, the unit is the map 
\[
X_p \lra FEX_p = \coprod_{\alpha:[p] \twoheadrightarrow [q] } X_q
\]
which sends $X_p$ identically to the component indexed by $\id: [p]\to [p]$. Further details may be found in \cite[p. 166]{FrPic}.

\subsection{Semi-simplicial spaces and their geometric realisation}

\begin{convention}
Throughout this paper, we work in the category of compactly generated spaces as defined in \cite{Strick} (the difference to the category considered by Steenrod in \cite{Steenrod} is that we do not require the Hausdorff condition). All products of spaces are understood to be taken in the category of compactly generated spaces. One key advantage of compactly generated spaces is that taking quotients commutes with taking products in full generality, by \cite[Proposition 2.1 and 2.20]{Strick}.  Slightly abusing notation, we shall denote this category by $\Top$ call its objects topological spaces.

We think of the category $\Set$ of sets as a full subcategory of $\Top$, namely that of spaces with the discrete topology. A similar convention applies to (semi-)simplicial sets.
\end{convention}

Recall that the \emph{standard $p$-simplex} is the space
$$\Delta^p = \left\{(t_0, t_1, \ldots, t_p) \in \bR^{p+1}\, \Bigm\vert\, \sum_{i=0}^p t_i = 1 \text{ and }t_i \geq 0 \text{ for each } i \right\}.$$
To a morphism $\varphi : [p] \to [q]$ in $\Delta$ there is an associated continuous map $\varphi_* : \Delta^p \to \Delta^q$ given by $\varphi_*(t_0, t_1, \ldots, t_p) = (s_0, s_1, \ldots, s_q)$ where $s_j= \sum_{i \in \varphi^{-1}(j)} t_i$. In particular, let $d^i : \Delta^{p-1} \to \Delta^p$ be given by $(t_0, t_1, \ldots, t_p) \mapsto (t_0, t_1, \ldots, t_{i-1}, 0, t_{i}, \ldots,  t_p)$, and $s^i : \Delta^p \to \Delta^{p-1}$ be given by $(t_0, t_1, \ldots, t_p) \mapsto (t_0, t_1, \ldots, t_{i-1}, t_i + t_{i+1}, t_{i+2},  \ldots,  t_p)$.

\vspace{1ex}

The \emph{geometric realisation} of a semi-simplicial space $X_\bullet$ is the quotient space
$$\norm{X_\bullet} = \left(\coprod_p X_p \times \Delta^p\right)/\sim$$
by the equivalence relation $(x,\varphi_* t)\sim (\varphi^* x,t)$ where $\varphi$ is a morphism in $\Delta_{\inj}$. This equivalence relation is generated by the requirement that $(x,d^i t)\sim (d_i x,t)$. The \emph{$n$-skeleton} $\norm{X_\bullet}^{(n)}$ of $\norm{X_\bullet}$ is the image of $\coprod_{p=0}^n X_p \times \Delta^p$ under the quotient map. The natural map
$$\colim\limits_{n \to \infty}\norm{X_\bullet}^{(n)} \lra \norm{X_\bullet}$$
is a homeomorphism.

\begin{example}
The geometric realisation of the semi-simplicial $p$-simplex $\nabla_\bullet^p$ is the topological $p$-simplex, $\norm{\nabla_\bullet^p} \cong \Delta^p$.
\end{example}

\vspace{1ex}

The \emph{(thin) geometric realisation} of a simplicial space $X_\bullet$ is the quotient space 
$$\abs{X_\bullet} = \left(\coprod_p X_p \times \Delta^p\right)/\approx,$$
with the equivalence relation $(x,\varphi_* t)\approx (\varphi^* x,t)$ where $\varphi$ is a morphism in $\Delta$. In addition to imposing the relation $\sim$ above, the relation $\approx$ imposes $(x,s^i t)\approx (s_i x,t)$. The \emph{fat geometric realisation} of a simplicial space $X_\bullet$ is by definition $\norm{X_\bullet} := \norm{F (X_\bullet)}$, and it has a canonical map to $\abs{X_\bullet}$. Skeleta of $\abs{X_\bullet}$ are defined as above, and $\abs{X_\bullet}$ is again the colimit of its skeleta.
\begin{lem}\label{lem:comparison-fat-to-thin-simplicialset}
For each simplicial set $Y_\bullet$, the quotient map $\norm{Y_\bullet} \to \abs{Y_\bullet}$ 
is a homotopy equivalence. 
\end{lem}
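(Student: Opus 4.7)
The plan is induction on the simplicial skeleta of $Y_\bullet$. Let $\mathrm{sk}_n Y_\bullet \subset Y_\bullet$ denote the sub-simplicial set generated by simplices of dimension $\leq n$, so that $Y_\bullet = \colim_n \mathrm{sk}_n Y_\bullet$ is an increasing union. Both $\norm{-}$ and $\abs{-}$ send this filtration to a filtration of the realisations by closed cofibrations and commute with its colimit, so it suffices to show that each induced map $\norm{\mathrm{sk}_n Y_\bullet} \to \abs{\mathrm{sk}_n Y_\bullet}$ is a homotopy equivalence and then pass to colimits.

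For the inductive step, the Eilenberg--Zilber lemma yields a pushout of simplicial sets
\[
\mathrm{sk}_n Y_\bullet \;\cong\; \mathrm{sk}_{n-1} Y_\bullet \;\cup_{\partial \Delta^n_\bullet \times Y_n^{\mathrm{nd}}}\; \bigl(\Delta^n_\bullet \times Y_n^{\mathrm{nd}}\bigr),
\]
where $Y_n^{\mathrm{nd}}$ is the discrete set of non-degenerate $n$-simplices. Both $\norm{-}$ and $\abs{-}$ preserve this pushout, so by the gluing lemma for pushouts of closed cofibrations along homotopy equivalences, the inductive step reduces to verifying the lemma for $Y_\bullet = \Delta^n_\bullet$ and $Y_\bullet = \partial \Delta^n_\bullet$.

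For $Y_\bullet = \Delta^n_\bullet = N([n])$, the existence of a terminal object $n \in [n]$ provides an ``extra degeneracy'' $s_{k+1}:\Delta^n_k \to \Delta^n_{k+1}$ that appends the terminal vertex; its identities with face maps (e.g.\ $d_{k+1}s_{k+1}=\id$ and $d_i s_{k+1} = s_k d_i$ for $i\leq k$) are strong enough to produce explicit contractions of both $\abs{\Delta^n_\bullet}$ and $\norm{\Delta^n_\bullet}$ onto the terminal vertex, so the quotient map between them is automatically a homotopy equivalence. For $Y_\bullet = \partial \Delta^n_\bullet$ one runs a secondary induction: since $\partial \Delta^n_\bullet$ is generated in dimensions $\leq n-1$, its skeletal filtration has strictly shorter length, and the outer induction applies, terminating with the trivial case $\partial \Delta^1_\bullet$ of two points.

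The main obstacle is the technical verification that fat realisation really preserves the Eilenberg--Zilber pushout in a form amenable to the gluing lemma---in particular, that the resulting attaching maps $\norm{\partial\Delta^n_\bullet}\times Y_n^{\mathrm{nd}} \to \norm{\mathrm{sk}_{n-1}Y_\bullet}$ are closed cofibrations, and that the ``extra degeneracy'' contractions of $\Delta^n_\bullet$ really do give a continuous contraction of $\norm{\Delta^n_\bullet}$ (not just of $\abs{\Delta^n_\bullet}$). The discreteness of $Y_n^{\mathrm{nd}}$ (because $Y_\bullet$ is a simplicial \emph{set}) makes this essentially routine, but the analogous assertions for simplicial spaces fail without additional ``goodness'' hypotheses on degeneracies---which is precisely the reason the fat realisation remains the preferred object in the topological setting.
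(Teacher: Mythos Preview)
Your argument is correct. Note, however, that the paper does not actually prove this lemma: immediately after the statement it simply cites \cite[Proposition 2.1]{RourkeSanderson}. So there is no ``paper's own proof'' to compare against; you have supplied a self-contained argument where the paper chose to outsource.

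A few remarks on how your proof sits relative to the rest of the paper. Your key ingredient---that $\norm{\Delta^n_\bullet}$ is contractible via the extra degeneracy appending the terminal vertex---is exactly what the paper later records as Example~\ref{ex:realization-of-simplicial-simplex}, proved there via Lemma~\ref{lem:semisimplicialcontraction}. So your argument is internally consistent with the paper's toolkit, and indeed could replace the citation. The point you flag in your final paragraph, that $\norm{-}$ preserves the Eilenberg--Zilber pushout, is unproblematic here: the forgetful functor $F:\si\Set \to \se\Set$ preserves colimits (both sides compute them levelwise in $\Set$), and semi-simplicial realisation is itself a colimit, so the composite $\norm{-} = \norm{F(-)}$ preserves all colimits of simplicial sets. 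The cofibrancy of the attaching maps is immediate since everything in sight is a subcomplex inclusion of CW complexes.

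The Rourke--Sanderson proof is in the same spirit (a skeletal/cell-by-cell comparison), so your approach is the standard one; you have just written out what the paper elected to cite.
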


The proof can be found in \cite[Proposition 2.1]{RourkeSanderson}. The following lemma allows us to compare the geometric realisation of a semi-simplicial set with the geometric realisation of the simplicial set obtained by freely adding degeneracies. Later, in Lemma \ref{lem:AdjUnitsTop}, we will prove the analogue for semi-simplicial spaces.

\begin{lem}\label{lem:AdjUnits}
For each semi-simplicial set $X_\bullet$, the map $\norm{X_\bullet} \to \norm{E X_\bullet}$ is a homotopy equivalence.
\end{lem}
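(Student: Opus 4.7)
The plan is to factor the map $\|u\|: \|X_\bullet\| \to \|EX_\bullet\|$ through the thin geometric realisation of the simplicial set $EX_\bullet$, and then apply Lemma \ref{lem:comparison-fat-to-thin-simplicialset}. The key observation is that the thin realisation kills exactly the extra cells that $E$ has added, so it is canonically homeomorphic to $\|X_\bullet\|$.

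First I would construct a homeomorphism $\pi: |EX_\bullet| \xrightarrow{\cong} \|X_\bullet\|$ explicitly. A point of $|EX_\bullet|$ is represented by a triple $(x, \alpha, t)$ with a surjection $\alpha: [p] \twoheadrightarrow [q]$, an element $x \in X_q$, and $t \in \Delta^p$; send it to $[x, \alpha_* t] \in \|X_\bullet\|$. For well-definedness one must check compatibility with the relation $(y, \varphi_* s) \approx (\varphi^* y, s)$ for arbitrary $\varphi: [r] \to [p]$ in $\Delta$: factor $\alpha \circ \varphi$ as $\beta' \circ \alpha'$ with $\alpha'$ surjective and $\beta'$ injective, so that $\varphi^*(x, \alpha) = (\beta'^* x, \alpha')$ by the definition of $E$; then both representatives map to $[\beta'^* x, \alpha'_* t]$ in $\|X_\bullet\|$, using the face relation for the injective map $\beta'$. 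In the other direction, send $[x, t] \in \|X_\bullet\|$ to the class of $(x, \mathrm{id}_{[p]}, t)$; the same factorisation argument shows this is inverse to $\pi$.

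Second, apply Lemma \ref{lem:comparison-fat-to-thin-simplicialset} to the simplicial set $EX_\bullet$, giving that the comparison map $q: \|EX_\bullet\| \to |EX_\bullet|$ is a homotopy equivalence. Finally, observe that the composition
$$\|X_\bullet\| \xrightarrow{\|u\|} \|EX_\bullet\| \xrightarrow{q} |EX_\bullet| \xrightarrow{\pi} \|X_\bullet\|$$
is the identity: the unit sends $x \in X_p$ to the summand of $EX_p$ indexed by $\mathrm{id}_{[p]}$, so at the level of realisations $[x,t] \mapsto [(x,\mathrm{id}_{[p]}), t] \mapsto [x, (\mathrm{id}_{[p]})_* t] = [x,t]$. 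Since $\pi \circ q$ is a homotopy equivalence which left-inverts $\|u\|$, a standard two-out-of-three style argument (if $g \circ f = \mathrm{id}$ with $g$ a homotopy equivalence, then $f$ is also a homotopy equivalence, with inverse $g$) concludes that $\|u\|$ is a homotopy equivalence.

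The only step that requires genuine work is the well-definedness check for $\pi$, since it requires unwinding the definition of the face map $\varphi^*: EX_p \to EX_r$ on each summand of the coproduct. Everything else is formal.
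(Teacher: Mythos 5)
Your proof is correct and follows essentially the same route as the paper: both establish that the composition $\norm{X_\bullet} \to \norm{EX_\bullet} \to \abs{EX_\bullet}$ is a homeomorphism and then conclude via Lemma \ref{lem:comparison-fat-to-thin-simplicialset}. The only variation is in how the homeomorphism is established --- you write an explicit inverse $\abs{EX_\bullet} \to \norm{X_\bullet}$ and verify well-definedness via the surjection-followed-by-injection factorisation of $\alpha \circ \varphi$, whereas the paper argues by induction on skeleta, comparing the pushout presentations of $\norm{X_\bullet}^{(n)}$ and $\abs{EX_\bullet}^{(n)}$ using the observation that the non-degenerate simplices of $EX_\bullet$ are exactly the simplices of $X_\bullet$ --- but both variants are complete.
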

\begin{proof}
We will show that the composition 
$$\norm{X_\bullet} \lra \norm{E X_\bullet} \lra \abs{E X_\bullet}$$
is a homeomorphism, whence the claim follows from Lemma \ref{lem:comparison-fat-to-thin-simplicialset}. For any simplicial set $Y_\bullet$, each point in $\abs{Y_\bullet}^{(n)} \setminus \abs{Y_\bullet}^{(n-1)}$ may be uniquely represented by a $(\sigma ; t_0, \ldots, t_n)$ with $\sigma \in Y_n$ a non-degenerate simplex. As the non-degenerate simplices of $E X_n$ are precisely given by $X_n \subset E X_n$, we may describe $\abs{EX_\bullet}^{(n)}$ as the push-out
\begin{equation*}
\xymatrix{
X_n \times \partial \Delta^n \ar[r] \ar[d] & X_n \times  \Delta^n \ar[d]\\
\abs{EX_\bullet}^{(n-1)} \ar[r] & \abs{EX_\bullet}^{(n)}.
}
\end{equation*}
Now $\norm{X_\bullet}^{(n)}$ is obtained from $\norm{X_\bullet}^{(n-1)}$ by precisely the same push-out description, which proves by induction that $\norm{X_\bullet}^{(n)} \to \abs{EX_\bullet}^{(n)}$ is a homeomorphism. Taking colimits over $n$ gives the required result.
\end{proof}

If $X_{\bullet,\bullet}$ is a bi-semi-simplicial space, we define its geometric realisation as the quotient space
\[
\norm{X_{\bullet,\bullet}} := \coprod_{p,q} X_{p,q} \times \Delta^p \times \Delta^q / \sim
\]
by the equivalence relation $((\varphi \times \psi)^*x, t, s) \sim (x, \varphi_*t, \psi_*s)$ for morphisms $\varphi \times \psi$ in $\Delta_{\inj} \times \Delta_{\inj}$. There are homeomorphisms
\begin{equation}\label{geometric-realization-bisemisimplicial}
\norm{X_{\bullet,\bullet}} \cong \norm{[p] \mapsto \norm{[q] \mapsto X_{p,q}}} \cong \norm{[q] \mapsto \norm{[p] \mapsto X_{p,q}}}
\end{equation}
and 
\begin{equation}\label{geometric-realization-bisemisimplicial2}
\norm{X_\bullet \otimes Y_\bullet} \cong \norm{X_\bullet} \times \norm{Y_\bullet},
\end{equation}
which use that we are working in the category of compactly generated spaces.

\subsubsection*{The singular simplicial set}

The \emph{singular simplicial set} of a topological space $X$ is the simplicial set with $p$-simplices $\Sing_p X:= \Top (\Delta^p,X)$, the set of continuous maps from the standard $p$-simplex to $X$, where $\varphi : [p] \to [q]$ acts via $\Top(\varphi_*, X)$. The evaluation maps
$$(\sigma, t) \mapsto \sigma(t) : \Top (\Delta^p,X) \times \Delta^p \lra X$$
assemble to a map $\abs{\Sing_\bullet X} \to X$.

\begin{lem}\label{lem:RealSing}
The maps
\begin{equation*}
\norm{\Sing_\bullet X} \stackrel{\sim}{\lra} \abs{\Sing_\bullet X} \stackrel{\sim}{\lra} X.
\end{equation*}
are weak homotopy equivalences.
\end{lem}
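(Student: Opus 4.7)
The plan is to treat the two maps separately. The first map $\norm{\Sing_\bullet X} \to \abs{\Sing_\bullet X}$ is a homotopy equivalence by an immediate application of Lemma \ref{lem:comparison-fat-to-thin-simplicialset} to the simplicial set $Y_\bullet = \Sing_\bullet X$, so all the content lies in showing that $\ev : \abs{\Sing_\bullet X} \to X$ is a weak equivalence.

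The key combinatorial input I would use is the following construction: given any finite simplicial complex $K$ and any continuous map $g : \abs{K} \to X$, each simplex $\tau : \Delta^k \hookrightarrow \abs{K}$ yields a singular simplex $g \circ \tau \in \Sing_k X$, and since the face maps of $\Sing_\bullet X$ act by restriction along $d^i : \Delta^{k-1} \to \Delta^k$, these simplices assemble (via the quotient map defining $\abs{\Sing_\bullet X}$) into a canonical continuous map $\tilde g_K : \abs{K} \to \abs{\Sing_\bullet X}$ that strictly satisfies $\ev \circ \tilde g_K = g$.

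To see that $\pi_n(\ev)$ is surjective, choose any triangulation of $S^n$ and apply the construction to a given representative $f : S^n \to X$; this directly produces a lift. For injectivity, the strategy is to apply the same construction relatively. If $f_0, f_1 : S^n \to \abs{\Sing_\bullet X}$ satisfy $\ev \circ f_0 \simeq \ev \circ f_1$ via a homotopy $H : S^n \times I \to X$, I would pick a triangulation of $S^n \times I$ restricting on $S^n \times \partial I$ to chosen triangulations $T_0, T_1$ of $S^n$, and apply the construction to $H$. This produces $\tilde H : S^n \times I \to \abs{\Sing_\bullet X}$ whose restrictions to $S^n \times \{i\}$ are the standard lifts $\widetilde{(\ev f_i)}_{T_i}$.

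The main obstacle is then to compare these standard lifts to the originally given maps: one must show that any map $f : \abs{K} \to \abs{\Sing_\bullet X}$ is homotopic in $\abs{\Sing_\bullet X}$ to some standard lift $\widetilde{(\ev f)}_T$ of its image under $\ev$. This is the technical heart of the argument; I would prove it by a cellular/simplicial approximation argument inside the CW complex $\abs{\Sing_\bullet X}$, using crucially the fact that non-degenerate simplices of $\Sing_\bullet X$ are literally continuous maps out of standard simplices, so that the natural combinatorial and topological structures on $\abs{K}$ and on the skeleta of $\abs{\Sing_\bullet X}$ are compatible. Granted this, chaining $f_0 \simeq \widetilde{(\ev f_0)}_{T_0} = \tilde H|_{S^n \times 0} \simeq \tilde H|_{S^n\times 1} = \widetilde{(\ev f_1)}_{T_1} \simeq f_1$ completes the injectivity argument, and hence the proof that $\ev$ is a weak equivalence.
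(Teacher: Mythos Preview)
Your treatment of the first map matches the paper's exactly: both invoke Lemma~\ref{lem:comparison-fat-to-thin-simplicialset}. For the second map, however, the paper does not give a proof at all; it simply cites \cite[Theorem 16.6]{May} and \cite[Theorem 4.5.30]{FrPic}, and indeed the introduction explicitly flags this lemma as one of the results ``used without proof''. So your proposal already goes well beyond what the paper does.

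Your sketch is one of the standard direct approaches, and the surjectivity half is complete as written. For injectivity you correctly isolate the crux: an arbitrary $f : \abs{K} \to \abs{\Sing_\bullet X}$ should be homotopic to the ``standard lift'' of $\ev \circ f$. This is true, but ``a cellular/simplicial approximation argument'' undersells what is needed. Simplicial approximation for maps into $\abs{\Sing_\bullet X}$ uses that $\Sing_\bullet X$ is a Kan complex, which you do not mention; once $f$ has been homotoped to the realisation $\abs{g}$ of a simplicial map $g : K' \to \Sing_\bullet X$, one must still observe that $\abs{g}$ literally coincides with the standard lift of $\ev \circ \abs{g}$ (this uses the degeneracy identifications in the thin realisation), and then rebase the homotopy $H$ so that it runs between $\ev\circ\abs{g_0}$ and $\ev\circ\abs{g_1}$ rather than between $\ev\circ f_0$ and $\ev\circ f_1$. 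None of this is difficult, but it is precisely the content of the cited references, and your sketch stops just short of supplying it. For comparison, the proof in \cite{May} is organised differently: it proceeds via the Kan property of $\Sing_\bullet X$ together with the identification of simplicial and topological homotopy groups for Kan complexes, rather than through explicit lifts of triangulated spheres.
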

\begin{proof}
The first map is a weak homotopy equivalence by Lemma \ref{lem:comparison-fat-to-thin-simplicialset}. The second map is shown to be a weak equivalence in e.g.\ \cite[Theorem 16.6]{May} or \cite[Theorem 4.5.30]{FrPic}.
\end{proof}

\subsection{Extra degeneracies and semi-simplicial (null)homotopies}

If $(Y_\bullet, Y_{-1},\eps)$ is an augmented semi-simplicial space, then there is an induced map $\norm{\eps_\bullet}:\norm{Y_{\bullet}} \to Y_{-1}$. There is a standard technique for easily showing that such maps are homotopy equivalences, which goes under the name of ``having an extra degeneracy".

\begin{lem}\label{lem:ExtraDeg}
Let $(Y_\bullet, Y_{-1},\eps)$ be an augmented semi-simplicial space, and suppose there are maps $h_{p+1} : Y_p \to Y_{p+1}$ for $p \geq -1$ such that 
\begin{align*}
d_{p+1} h_{p+1} &= \mathrm{Id}_{Y_p},\\
d_i h_{p+1} &= h_p d_i \text{ for } 0 \leq i < p+1,\\
\epsilon_0 h_0 &= \mathrm{Id}_{Y_{-1}}
\end{align*}
then $\norm{\eps_\bullet}:\norm{Y_{\bullet}} \to Y_{-1}$ is a homotopy equivalence.

Dually, if there are maps $g_{p+1} : Y_p \to Y_{p+1}$ for $p \geq -1$ such that 
\begin{align*}
d_{0} g_{p+1} &= \mathrm{Id}_{Y_p},\\
d_i g_{p+1} &= g_p d_{i-1} \text{ for } 0 < i \leq p+1,\\
\epsilon_0 g_0 &= \mathrm{Id}_{Y_{-1}}
\end{align*}
then the same conclusion holds.
\end{lem}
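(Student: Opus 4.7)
The standard strategy for extra-degeneracy arguments is to write down an explicit deformation retraction of $\norm{Y_\bullet}$ onto $Y_{-1}$ (embedded via $h_0$). Define the section
$$\sigma := \bigl(Y_{-1} \xrightarrow{h_0} Y_0 \hookrightarrow \norm{Y_\bullet}\bigr),$$
where $Y_0 \hookrightarrow \norm{Y_\bullet}$ is the inclusion of the $0$-skeleton. The hypothesis $\eps_0 h_0 = \id_{Y_{-1}}$ gives $\norm{\eps_\bullet} \circ \sigma = \id_{Y_{-1}}$ immediately, so everything hinges on producing a homotopy $H: \norm{Y_\bullet} \times [0,1] \lra \norm{Y_\bullet}$ with $H_0 = \id$ and $H_1 = \sigma \circ \norm{\eps_\bullet}$.

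Define $H$ on each $Y_p \times \Delta^p \times [0,1]$ by sending $(y; t_0, \ldots, t_p, s)$ to the class of
$$\bigl(h_{p+1}(y);\, (1-s)t_0,\, \ldots,\, (1-s)t_p,\, s\bigr) \in Y_{p+1} \times \Delta^{p+1}.$$
The first step is to check that this descends to $\norm{Y_\bullet}$, i.e.\ that $H$ respects the relation $(y, d^i t) \sim (d_i y, t)$ for $0 \leq i \leq p$. Applying the formula to $(y, d^i t)$ yields a point of $Y_{p+1} \times \Delta^{p+1}$ whose barycentric coordinates insert a $0$ in position $i$ while preserving the last coordinate $s$ (since $i \leq p < p+1$), and so that point equals $(h_{p+1}(y), d^i((1-s)t_0, \ldots, (1-s)t_p, s))$; now applying $\sim$ and then the identity $d_i h_{p+1} = h_p d_i$ produces exactly the formula for $H$ applied to $(d_i y, t)$. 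This well-definedness verification is the main technical hurdle, but it is a direct computation.

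Next, evaluate at the endpoints. At $s = 0$ we obtain $[h_{p+1}(y); t_0, \ldots, t_p, 0]$, which is $\sim [d_{p+1} h_{p+1}(y); t_0, \ldots, t_p] = [y; t_0, \ldots, t_p]$ by the identity $d_{p+1} h_{p+1} = \id$, so $H_0 = \id$. At $s = 1$ we obtain $[h_{p+1}(y); 0, \ldots, 0, 1]$, and repeatedly applying the relation $(z, d^0 u) \sim (d_0 z, u)$ together with $d_0 h_{q+1} = h_q d_0$ collapses the barycentric coordinates one dimension at a time, reaching $[h_0 \eps_p(y); 1] \in Y_0 \times \Delta^0$ after $p+1$ steps; here we use the augmentation identity $\eps_p = \eps_0 \circ d_0^{\,p}$ (obtained by iterating $\eps_q \circ d_0 = \eps_{q-1}$) to identify the final face with the augmentation. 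Thus $H_1 = \sigma \circ \norm{\eps_\bullet}$, completing the argument.

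The dual case (with maps $g_{p+1}$) is formally identical: use the homotopy $(y; t_0, \ldots, t_p, s) \mapsto [g_{p+1}(y);\, s,\, (1-s)t_0,\, \ldots,\, (1-s)t_p]$ on $Y_{p+1} \times \Delta^{p+1}$, set $\sigma = g_0$ followed by the $0$-skeleton inclusion, and verify well-definedness using $d_i g_{p+1} = g_p d_{i-1}$ (now the extra coordinate is inserted at position $0$ rather than $p+1$, so $d^i$ shifts the position of the $s$-coordinate harmlessly). The endpoint computations use $d_0 g_{p+1} = \id$ and the iterated identity $d_{q+1} g_{q+1}$-style relations culminating in $\eps_0 g_0 = \id$. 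No new ideas are required.
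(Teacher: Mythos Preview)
Your proof is correct and follows essentially the same approach as the paper: both construct the section $h_0$ and the explicit homotopy $(y;t_0,\ldots,t_p,s)\mapsto \bigl(h_{p+1}(y);(1-s)t_0,\ldots,(1-s)t_p,s\bigr)$, then check compatibility with the face relations and evaluate at the endpoints. You give considerably more detail than the paper in verifying well-definedness and the $s=1$ endpoint, but the underlying argument is identical.
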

In the first case the conditions on the maps $h_{p+1}$ are formally identical to the conditions relating face maps $d_i$ to degeneracy maps $s_i$, except that $h_{p+1}$ behaves like a hypothetical degeneracy map $s_{p+1}$, whereas in the definition of a simplicial object there are only degeneracy maps $s_0, s_1, \ldots, s_p : Y_p \to Y_{p+1}$. For this reason such a collection of maps $h_{p+1}$ is often called an \emph{extra degeneracy}. (Similarly, $g_{p+1}$ behaves like a hypothetical degeneracy map $s_{-1} : Y_p \to Y_{p+1}$.)

\begin{proof}
Let us just consider the first case. We have $h_0 : Y_{-1} \to Y_0 \subset \norm{Y_\bullet}$ and $\norm{\epsilon_\bullet} \circ h_0 = \mathrm{Id}_{Y_{-1}}$. The maps $[0,1] \times Y_p \times \Delta^p \to Y_{p+1} \times \Delta^{p+1} \to \norm{Y_{\bullet}}$, defined by
\[
 (s;x;t_0,\ldots,t_p) \mapsto (h_{p+1} (x); (1-s)t_0, \ldots, (1-s)t_p,s),
\]
respect the equivalence relation used in the definition of the geometric realisation. Since taking products and taking quotients commutes in compactly generated spaces, this yields a homotopy
$ H: [0,1] \times \norm{Y_\bullet} \to \norm{Y_\bullet}$, and one verifies that $H(0,-)= \mathrm{Id}_{\norm{Y_\bullet}}$ and that $H(1, -) = h_0 \circ \norm{\epsilon_\bullet}$. 
\end{proof}

Any semi-simplicial space $Y_\bullet$ is augmented over a point $*$ in a unique way. The data of an extra degeneracy in this case gives in particular a point $y_0 : * \to Y_0$, and the homotopy in the proof gives a contraction of $\norm{Y_\bullet}$ to the point $\{y_0\} \subset Y_0 \subset \norm{Y_\bullet}$. This can be generalised to maps of semi-simplicial spaces, as follows.

\begin{lem}\label{lem:semisimplicialcontraction}
Let $f_\bullet: X_\bullet \to Y_\bullet$ be a map of semi-simplicial spaces and $y_0 \in Y_0$. A \emph{semi-simplicial nullhomotopy} from $f_\bullet$ to $y_0$ is a collection of continuous maps $h_{p+1}: X_p \to Y_{p+1}$ such that
\begin{align*}
d_{p+1} h_{p+1} &= f_p,\\
 d_i h_{p+1} &= h_p d_i \,\,\, \text{ for }\,0 \leq i \leq p \,\text{ and } p \geq 1,\\
   d_0 h_{1} &\equiv y_0.
\end{align*}
Such a semi-simplicial nullhomotopy induces a homotopy from $\norm{f_{\bullet}}$ to the constant map $\norm{X_\bullet} \to \{ y_0\} \subset Y_0 \subset \norm{Y_\bullet}$.
\end{lem}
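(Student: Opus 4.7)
The proof plan is to write down the evident explicit homotopy, in direct analogy with Lemma \ref{lem:ExtraDeg}, and verify its properties.

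First, I would define, for each $p \geq 0$, the continuous map
\[
\widetilde{H}_p : [0,1] \times X_p \times \Delta^p \lra \norm{Y_\bullet}, \qquad (s; x; t_0,\ldots,t_p) \longmapsto [h_{p+1}(x);\, (1-s)t_0, \ldots, (1-s)t_p,\, s],
\]
where the right-hand side denotes the class of the point $(h_{p+1}(x); (1-s)t_0,\ldots,(1-s)t_p,s) \in Y_{p+1} \times \Delta^{p+1}$ in $\norm{Y_\bullet}$. My goal is to show these maps descend to a single homotopy $H : [0,1] \times \norm{X_\bullet} \to \norm{Y_\bullet}$ with $H(0,-) = \norm{f_\bullet}$ and $H(1,-)$ the constant map at $y_0$.

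The first step is to check that the $\widetilde{H}_p$ respect the equivalence relation defining $\norm{X_\bullet}$. Given an injective monotone $\varphi : [q] \to [p]$, let $\widetilde\varphi : [q+1] \to [p+1]$ be the injective monotone map extending $\varphi$ by $\widetilde\varphi(q+1) = p+1$. A direct computation with the formula $(\varphi_* u)_j = \sum_{i \in \varphi^{-1}(j)} u_i$ shows
\[
\widetilde\varphi_* ((1-s)t_0,\ldots,(1-s)t_q, s) = ((1-s)(\varphi_* t)_0,\ldots,(1-s)(\varphi_* t)_p, s),
\]
while $\widetilde\varphi$ decomposes as a composition of face maps $d^i$ with $0 \leq i \leq p$, so the hypothesis $d_i h_{p+1} = h_p d_i$ (for $0 \leq i \leq p$) iterates to give $\widetilde\varphi^* h_{p+1}(x) = h_{q+1} \varphi^*(x)$. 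Combining these with the equivalence relation of $\norm{Y_\bullet}$ yields $\widetilde{H}_p(s;x;\varphi_* t) = \widetilde{H}_q(s;\varphi^* x; t)$. Since we work in compactly generated spaces, taking products commutes with taking quotients (as cited in the Convention), so the $\widetilde{H}_p$ assemble to a continuous map $H : [0,1] \times \norm{X_\bullet} \to \norm{Y_\bullet}$.

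The second step is to evaluate at the endpoints. At $s=0$ the image is $[h_{p+1}(x); t_0,\ldots, t_p, 0]$; applying the relation $(y, d^{p+1}t') \sim (d_{p+1}y, t')$ gives $[d_{p+1}h_{p+1}(x); t_0,\ldots,t_p] = [f_p(x); t_0,\ldots,t_p]$, by the identity $d_{p+1}h_{p+1} = f_p$. Hence $H(0,-) = \norm{f_\bullet}$. At $s=1$ the image is $[h_{p+1}(x); 0,\ldots,0,1]$; the vertex $(0,\ldots,0,1) \in \Delta^{p+1}$ is the image of the unique point of $\Delta^0$ under $p+1$-fold iteration of $d^0$, so this point equals $[d_0^{p+1}h_{p+1}(x); *]$ in $\norm{Y_\bullet}$. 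Iterating the identity $d_0 h_{k+1} = h_k d_0$ (valid for $k \geq 1$) collapses $d_0^{p+1} h_{p+1}(x)$ to $d_0 h_1 (d_0^p x)$, which equals $y_0$ by the final hypothesis. Thus $H(1,-)$ is the constant map at $y_0 \in Y_0 \subset \norm{Y_\bullet}$, completing the proof.

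The main obstacle is bookkeeping rather than substance: keeping straight the extension $\varphi \rightsquigarrow \widetilde\varphi$ and checking that the hypotheses $d_i h_{p+1} = h_p d_i$, $d_{p+1}h_{p+1} = f_p$, and $d_0 h_1 \equiv y_0$ correspond exactly to the identifications needed at the interior, at $s=0$, and at $s=1$ respectively. No new ideas beyond the extra-degeneracy argument of Lemma \ref{lem:ExtraDeg} are required.
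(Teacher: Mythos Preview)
Your proof is correct and follows exactly the approach indicated in the paper: the paper's proof simply says to use the same formula as in Lemma~\ref{lem:ExtraDeg}, and you have written out precisely that formula and carefully verified the required identifications. Your explicit introduction of the extension $\widetilde\varphi$ and the endpoint computations are the details the paper leaves to the reader.
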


\begin{proof}
Use the same formula as in the proof of Lemma \ref{lem:ExtraDeg} to obtain a homotopy  $H: [0,1] \times \norm{X_\bullet} \to \norm{Y_\bullet}$ with $H(0,- )= \norm{f_\bullet}$ and $H(1, -)$ the constant map with value $y_0$.  
\end{proof}

\begin{example}\label{ex:realization-of-simplicial-simplex}
The fat geometric realisation of the simplicial $n$-simplex $\Delta_\bullet^n$ is contractible (it is not homeomorphic to $\Delta^n$).
Recall that $\Delta^n_p = \Delta([p],[n])$ and let $h_{p+1} : \Delta([p],[n]) \to \Delta([p+1],[n])$ be the map that sends $\eta: [p] \to [n]$ to the map $\eta': [p+1]\to [n]$ which is defined by $\eta' (i)=\eta(i)$ for $i \leq p$ and $\eta' (p+1):= n$. 
This is a simplicial nullhomotopy from $\id_{\Delta_\bullet^n}$ to the vertex $n \in \Delta_0^n$, and hence the claim follows from Lemma \ref{lem:semisimplicialcontraction}.
\end{example}

More generally, we have the notion of a semi-simplicial homotopy between semi-simplicial maps.

\begin{lem}\label{lem:semisimplicialhomotopy}
Let $f_\bullet, g_\bullet: X_\bullet \to Y_\bullet$ be maps of semi-simplicial spaces. A \emph{semi-simplicial homotopy} from $f_\bullet$ to $g_\bullet$ is a collection of continuous maps $h_{p+1, i}: X_p \to Y_{p+1}$ for $i=0,1,\ldots, p$ such that
\begin{align*}
d_i h_{p+1, i} &= d_i h_{p+1, i-1} \,\,\, \text{ for }\,0 < i \leq p,\\
d_i h_{p+1, j} &= h_{p, j-1} d_i \,\,\, \text{ for }\,0 \leq i < j,\\
d_i h_{p+1, j} &= h_{p, j} d_i \,\,\, \text{ for }\, j+1 < i \leq p,\\
d_0 h_{p+1, 0} &= f_p,\\
d_{p+1} h_{p+1, p} &= g_p.
\end{align*}
Such a semi-simplicial homotopy induces a homotopy from $\norm{f_{\bullet}}$ to $\norm{g_\bullet}$.
\end{lem}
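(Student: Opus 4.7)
The plan is to construct an explicit homotopy $H: [0,1] \times \norm{X_\bullet} \to \norm{Y_\bullet}$ by a piecewise linear formula on each prism $[0,1] \times X_p \times \Delta^p$, analogous to the proof of Lemma \ref{lem:semisimplicialcontraction} but now using the standard ``shuffle'' decomposition of $[0,1] \times \Delta^p$ into $p+1$ top-dimensional simplices, instead of a single cone collapse. For each $i \in \{0, \ldots, p\}$, let $\sigma_i^p \subset [0,1] \times \Delta^p$ be the convex hull of the ordered vertices $(0, v_0), \ldots, (0, v_i), (1, v_i), \ldots, (1, v_p)$, where $v_0, \ldots, v_p$ are the vertices of $\Delta^p$. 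Each $\sigma_i^p$ is affinely identified with $\Delta^{p+1}$ via the barycentric coordinates $(\alpha_0, \ldots, \alpha_{p+1})$ associated to that vertex ordering, and on $\sigma_i^p \times X_p$ we set
\[ H(s, t, x) = [h_{p+1, i}(x); \alpha_0, \ldots, \alpha_{p+1}] \in \norm{Y_\bullet}. \]

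Four compatibilities then need to be verified. (i) The pieces agree on the common face $\sigma_i^p \cap \sigma_{i+1}^p$: on this face the coordinate $\alpha_{i+1}$ vanishes in either parametrisation, so the equivalence relation $(y, d^{i+1} w) \sim (d_{i+1} y, w)$ in $\norm{Y_\bullet}$ reduces both formulas to $d_{i+1}$ applied to either $h_{p+1, i}$ or $h_{p+1, i+1}$; the first identity of the lemma, with $j = i+1$, provides the agreement. (ii) At the endpoints only $\sigma_p^p$ meets $\{0\} \times \Delta^p$ and only $\sigma_0^p$ meets $\{1\} \times \Delta^p$; vanishing of $\alpha_{p+1}$ or $\alpha_0$ on these boundary faces reduces the formula, via the same equivalence relation, to $d_{p+1} h_{p+1, p} = g_p$ and $d_0 h_{p+1, 0} = f_p$, matching $\norm{g_\bullet}$ and $\norm{f_\bullet}$ respectively (possibly after reversing the direction of the interval).

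(iii) The formula must descend from $[0,1] \times X_p \times \Delta^p$ to $[0,1] \times \norm{X_\bullet}$, i.e., respect the relation $(x, d^k t) \sim (d_k x, t)$ arising from the face maps of $X_\bullet$. This is the main combinatorial step: the image of $\sigma_i^{p-1}$ under $\mathrm{id}_{[0,1]} \times d^k$ sits as a codimension-one face of $\sigma_i^p$ if $i < k$, and of $\sigma_{i+1}^p$ if $i \geq k$; in each case exactly one barycentric coordinate vanishes. Applying the equivalence relation in $\norm{Y_\bullet}$ and comparing with the value of $H$ at $(s, d_k x, t)$ reduces precisely to the identities $d_i h_{p+1, j} = h_{p, j-1} d_i$ (used when $i < j$) and $d_i h_{p+1, j} = h_{p, j} d_i$ (used when $i > j+1$), with the two cases of the position of $k$ triggering the two identities. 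Finally, continuity of $H$ is automatic from its explicit piecewise linear definition together with the fact that products with $[0,1]$ commute with quotients in the category of compactly generated spaces, so that $[0,1] \times \norm{X_\bullet}$ carries the quotient topology from $\coprod_p [0,1] \times X_p \times \Delta^p$. The main obstacle is the case analysis in step (iii), which requires careful bookkeeping of indices.
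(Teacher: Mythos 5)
Your proposal is correct and takes exactly the same route as the paper's own proof. The paper defines affine maps $\psi_{p+1,i}\colon\Delta^{p+1}\to\Delta^1\times\Delta^p$ whose images are precisely your simplices $\sigma_i^p$ (the shuffle decomposition of the prism), sets up the same formula on each piece, and then in a single sentence assigns the gluing on the interior walls to the first set of identities, the descent along face maps of $X_\bullet$ to the second and third, and the boundary identification with $\norm{f_\bullet}$ and $\norm{g_\bullet}$ to the fourth and fifth; your (i), (iii), (ii) spell out that case analysis in detail.
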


\begin{proof}
Consider the maps
\begin{align*}
\psi_{p+1,i} : \Delta^{p+1} &\lra \Delta^1 \times \Delta^p\\
\sum_{j=0}^{p+1} t_j e_j &\longmapsto \sum_{j=0}^i t_j (0, e_j) + \sum_{j=i}^{p} t_{j+1} (1, e_j)
\end{align*}
for $i=0,1,\ldots,p$, giving the standard decomposition of the prism into simplices. The maps 
\begin{align*}
\psi_{p+1,i}(\Delta^{p+1}) \times X_p &\lra \Delta^{p+1} \times Y_{p+1} \subset \norm{Y_\bullet}\\
(\psi_{p+1,i}(t), x) &\longmapsto (t, h_{p+1,i}(x))
\end{align*}
glue to maps $\phi_p : [0,1] \times \Delta^p \times X_p \to \norm{Y_\bullet}$ (using the first set of identities) which in turn glue to a map $\phi : [0,1] \times \norm{X_\bullet} \to \norm{Y_\bullet}$ (using the second and third set of identities). This gives the required homotopy (using the fourth and fifth set of identities).
\end{proof}

\subsection{Spectral sequences}\label{subsec:spectralsequence}

The space $\norm{X_\bullet}$ is filtered by its skeleta $\norm{X_{\bullet}}^{(n)}$, where $\norm{X_{\bullet}}^{(0)}=X_0$, and 
\begin{equation}\label{filtration-as-pushout}
 \norm{X_{\bullet}}^{(n)} = \norm{X_{\bullet}}^{(n-1)} \cup_{X_n \times \partial \Delta^n} X_n \times \Delta^n.
\end{equation}
This filtration has the property that each map $K \to \norm{X_{\bullet}}$ from a compact Hausdorff space $K$ factors through some finite stage; see e.g.\ \cite[Proposition A.1]{Hatcher} for a related argument, or \cite[Lemma 3.6]{Strick} for a general argument.

Recall that a \emph{local coefficient system} on a space $Y$ is a functor $\cL$ from the fundamental groupoid $\Pi_1 (Y)$ to the category of $R$-modules for a commutative ring $R$. If $Y$ is semi-locally simply-connected then we may also consider a local coefficient system on $Y$ to be a bundle $\cL \to Y$ of $R$-modules.

For any system of local coefficients $\cL$ on $\norm{X_\bullet}$, the skeletal filtration yields a spectral sequence 
$$E^1_{p,q} = H_{p+q}(\norm{X_{\bullet}}^{(q)},\norm{X_{\bullet}}^{(q-1)};\cL) \Longrightarrow H_{p+q}(\norm{X_{\bullet}};\cL),$$
which is strongly convergent as each map from a simplex to $\norm{X_\bullet}$ lands in some finite skeleton. Let $\cL\vert_{X_q \times \Delta^q}$ be the pullback of $\cL$ along $X_q \times \Delta^q \to \norm{X_\bullet}$, and $\cL_q$ be the restriction of $\cL\vert_{X_q \times \Delta^q}$ to $X_q \cong X_q \times b_q$ where $b_q \in \Delta^q$ is the barycentre. The natural map
$$H_{p+q}(X_q \times \Delta^q, X_q \times \partial \Delta^q;\cL\vert_{X_q \times \Delta^q}) \lra H_{p+q}(\norm{X_{\bullet}}^{(q)},\norm{X_{\bullet}}^{(q-1)};\cL)$$
is an isomorphism, using the description \eqref{filtration-as-pushout} and excision. The contraction of $\Delta^q$ to $b_q \in \Delta^q$ determines an isomorphism $\cL\vert_{X_q \times \Delta^q} \cong \pi_1^*\cL_q$, and the K{\"u}nneth map 
$$H_p(X_q ; \cL_q) \cong H_p(X_q ; \cL_q) \otimes H_q(\Delta^q, \partial \Delta^q ; \bZ) \lra H_{p+q}(X_q \times \Delta^q, X_q \times \partial \Delta^q;\pi_1^*\cL_q)$$
is an isomorphism (as the homology of $(\Delta^q, \partial \Delta^q)$ is free). Thus we obtain the description
\begin{equation*}
E^1_{p,q} \cong H_{p}(X_q ; \cL_q) \Longrightarrow H_{p+q}(\norm{X_{\bullet}};\cL)
\end{equation*}
for this spectral sequence. To each face map $d_i : X_q \to X_{q-1}$ there is a unique homotopy class of path in $\Delta^q$ from $d^i(b_{q-1})$ to $b_q$, monodromy along which gives a preferred map of local coefficient systems $\phi_i : \cL_q \to \cL_{q-1}$ covering $d_i$. One may show (see \cite[\S 5]{Segal-classifying}) that the $d^1$-differential is
$$d^1 = \sum_{i=0}^q (-1)^i (d_i, \phi_i)_* : H_{p}(X_q ; \cL_q) \lra H_{p}(X_{q-1}; \cL_{q-1})$$
the alternating sum of the maps induced on homology by the face maps.

More generally, if $(X_\bullet,X_{-1},\eps)$ is an augmented semi-simplicial space then (replacing $X_{-1}$ by the mapping cylinder of $\norm{\epsilon_\bullet} : \norm{ X_\bullet} \to X_{-1}$ and) setting $F_{-1} = (X_{-1}, X_{-1})$ and $F_q = (X_{-1}, \norm{X_{\bullet}}^{(q)})$ for $q \geq 0$ gives a filtration of pairs, and hence for each local coefficient system $\cL$ on $X_{-1}$ a spectral sequence with $E^1_{p,q} \cong H_{p}(X_q;\cL_q)$ for $p \geq 0$ and $q \geq -1$, which converges to $H_{p+q+1}(X_{-1}, \norm{X_{\bullet}};\cL)$.

\section{Results on the homotopy type of the geometric realisation}\label{sec:homotopy-geoemtric-realization}

In this section we shall collect results which allow one to deduce homotopical statements about geometric realisation of a map $f_\bullet : X_\bullet \to Y_\bullet$ of semi-simplicial spaces from homotopical statements about the maps $f_p : X_p \to Y_p$. One says that a semi-simplicial map $f_\bullet$ has a certain property \emph{levelwise} if each map $f_p$ has that property. As a basic technical tool for gluing together $k$-connected maps, we will take Theorem 6.7.9 of tom Dieck's book \cite{tomDieck}.

\begin{lem}\label{lem:inclusion-skeleta}
For $m \geq n$ the inclusion $\norm{X_{\bullet}}^{(n)} \to  \norm{X_{\bullet}}^{(m)}$ is $n$-connected, and the inclusion $\norm{X_{\bullet}}^{(n)} \to \norm{X_{\bullet}}$ is $n$-connected.
\end{lem}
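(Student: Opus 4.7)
The plan is to prove the connectivity of $\norm{X_\bullet}^{(n)} \hookrightarrow \norm{X_\bullet}^{(m)}$ by induction on $m-n$, reducing to the single-step case, and then to pass to the colimit for the infinite statement. The essential input is the pushout description \eqref{filtration-as-pushout} from the previous subsection.

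For the base step I would show that $\norm{X_\bullet}^{(k-1)} \hookrightarrow \norm{X_\bullet}^{(k)}$ is $(k-1)$-connected for each $k \geq 1$. Since $\Delta^k$ is contractible, the inclusion $X_k \times \partial \Delta^k \hookrightarrow X_k \times \Delta^k$ is weakly equivalent to the projection $X_k \times \partial \Delta^k \to X_k$, a trivial fibre bundle whose fibre is $\partial \Delta^k \cong S^{k-1}$, and the long exact sequence of homotopy groups for this bundle immediately yields $(k-1)$-connectivity. The inclusion is moreover a closed cofibration. The pushout-preserves-connectivity principle---a standard consequence of the local-to-global result \cite[Theorem 6.7.9]{tomDieck}, applied to the open cover of $\norm{X_\bullet}^{(k)}$ consisting of an open thickening $U$ of $\norm{X_\bullet}^{(k-1)}$ which deformation retracts onto it, together with $V$ the image of $X_k \times \inter{\Delta^k}$---then gives that $\norm{X_\bullet}^{(k-1)} \hookrightarrow \norm{X_\bullet}^{(k)}$ is $(k-1)$-connected.

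Composing these single-step inclusions for $k = n+1, \ldots, m$---each of which is at least $n$-connected---and using that the composition of $n$-connected maps is $n$-connected yields the first statement. For the infinite case, I would use the fact recalled after \eqref{filtration-as-pushout} that every continuous map from a compact Hausdorff space to $\norm{X_\bullet}$ factors through some finite skeleton. Given $i \leq n$, any element of $\pi_i(\norm{X_\bullet}, \norm{X_\bullet}^{(n)})$ is represented by a map out of the compact pair $(D^i, S^{i-1})$ that factors through some $(\norm{X_\bullet}^{(m)}, \norm{X_\bullet}^{(n)})$, where it vanishes by the finite case already proved. Hence $\pi_i(\norm{X_\bullet}, \norm{X_\bullet}^{(n)}) = 0$ for $i \leq n$, giving the second statement.

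The main obstacle is the pushout-connectivity step: while heuristically obvious, its rigorous verification through the cited local-to-global theorem requires care in choosing the open cover so that the pairwise intersection $U \cap V$ has the right homotopy type relative to $X_k \times \partial \Delta^k$ (essentially, $U \cap V$ should deformation retract onto an open collar of $X_k \times \partial\Delta^k$ in $X_k \times \Delta^k$). Once this is set up, the remaining assembly of the proof is routine bookkeeping.
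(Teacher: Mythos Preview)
Your proposal is correct and follows essentially the same approach as the paper: both arguments reduce to showing that the one-step inclusion $\norm{X_\bullet}^{(k-1)} \hookrightarrow \norm{X_\bullet}^{(k)}$ is $(k-1)$-connected via the pushout description \eqref{filtration-as-pushout}, an open cover by (a thickening of) the lower skeleton together with the interiors of the new top cells, and \cite[Theorem 6.7.9]{tomDieck}; the infinite case is then handled by the same compactness argument. The paper's cover is the explicit one $U_0 = \norm{X_\bullet}^{(k)} \setminus (X_k \times \{b\})$ and $U_1 = X_k \times \inter{\Delta^k}$, which realises exactly the ``thickening'' you describe and sidesteps the collar issue you flag as the main obstacle.
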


\begin{proof}
For the first claim, it is enough to prove that the inclusion $\norm{X_{\bullet}}^{(n)} \to \norm{X_{\bullet}}^{(n+1)}$ is $n$-connected. To see this, let $b \in \Delta^{n+1}$ be the barycentre and consider the covering of $\norm{X_{\bullet}}^{(n+1)}$ by the open sets 
\begin{align*}
U_0^X &=\norm{X_{\bullet}}^{(n+1)} \setminus (X_{n+1} \times \{b\})\simeq \norm{X_{\bullet}}^{(n)}\\
U_1^X &=X_{n+1} \times \inter{\Delta^{n+1}}\simeq X_{n+1}
\end{align*}
with intersection $U_{0}^X \cap U_{1}^X \simeq X_{n+1} \times \partial \Delta^{n+1}$. Applying \cite[Theorem 6.7.9]{tomDieck} to the map
$$(U_0^X, U_0^X, U_1^X \cap U_0^X) \lra (\norm{X_\bullet}^{(n+1)}, U_0^X, U_1^X)$$
shows that $\norm{X_\bullet}^{(n)} \overset{\sim}\to U_0^X \to \norm{X_\bullet}^{(n+1)}$ is $n$-connected, as required. The second claim follows from the first one and the fact that a map from a compact Hausdorff space to $\norm{X_\bullet}$ factors through a skeleton.
\end{proof}

\begin{thm}\label{thm:levelwiseequivalence}
Let $f_\bullet:X_\bullet\to Y_\bullet$ be a map of semi-simplicial spaces which is a levelwise weak homotopy equivalence. Then $\norm{f_\bullet}:\norm{X_\bullet}\to \norm{Y_\bullet}$ is a weak homotopy equivalence.
\end{thm}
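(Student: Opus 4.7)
The plan is to prove by induction on $n$ that $\norm{f_\bullet}^{(n)} \colon \norm{X_\bullet}^{(n)} \to \norm{Y_\bullet}^{(n)}$ is a weak homotopy equivalence for every $n \geq 0$, and then to deduce the theorem from Lemma \ref{lem:inclusion-skeleta}. The base case $n=0$ is immediate, since $\norm{X_\bullet}^{(0)} = X_0$ and the induced map is just $f_0$.

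For the inductive step, I mimic the strategy used in the proof of Lemma \ref{lem:inclusion-skeleta}. Let $b \in \Delta^{n+1}$ denote the barycentre and consider the open cover of $\norm{X_\bullet}^{(n+1)}$ by
\[
U_0^X = \norm{X_\bullet}^{(n+1)} \setminus (X_{n+1} \times \{b\}) \qquad \text{and} \qquad U_1^X = X_{n+1} \times \inter{\Delta^{n+1}},
\]
and the analogous cover $U_0^Y, U_1^Y$ of $\norm{Y_\bullet}^{(n+1)}$. The radial deformation of $\Delta^{n+1}\setminus\{b\}$ onto $\partial\Delta^{n+1}$ furnishes deformation retractions $U_0^X \simeq \norm{X_\bullet}^{(n)}$, $U_1^X \simeq X_{n+1}$, and $U_0^X \cap U_1^X \simeq X_{n+1} \times \partial \Delta^{n+1}$, all natural in $f_\bullet$. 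By the inductive hypothesis $\norm{f_\bullet}^{(n)}$ is a weak equivalence; by assumption $f_{n+1}$ is a weak equivalence, and hence so is $f_{n+1} \times \id_{\partial \Delta^{n+1}}$ (since $\partial\Delta^{n+1}$ is a finite CW complex). Applying tom Dieck's local-to-global principle \cite[Theorem 6.7.9]{tomDieck} to the map of excisive triads
\[
(\norm{X_\bullet}^{(n+1)}; U_0^X, U_1^X) \lra (\norm{Y_\bullet}^{(n+1)}; U_0^Y, U_1^Y)
\]
then shows that $\norm{f_\bullet}^{(n+1)}$ is $k$-connected for every $k$, hence a weak equivalence.

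To conclude, fix a basepoint and any $k \geq 0$. By Lemma \ref{lem:inclusion-skeleta}, the maps $\pi_k \norm{X_\bullet}^{(k+1)} \to \pi_k \norm{X_\bullet}$ and $\pi_k \norm{Y_\bullet}^{(k+1)} \to \pi_k \norm{Y_\bullet}$ are isomorphisms, and the inductive conclusion applied at level $n = k+1$ shows that $\pi_k \norm{f_\bullet}^{(k+1)}$ is an isomorphism; hence so is $\pi_k \norm{f_\bullet}$. The principal technical point I expect to dwell on is checking the three hypotheses of tom Dieck's theorem, and in particular that the deformation retract $U_0^X \cap U_1^X \simeq X_{n+1} \times \partial\Delta^{n+1}$ is natural enough that the induced map on intersections agrees up to homotopy with $f_{n+1} \times \id_{\partial\Delta^{n+1}}$; the rest is a routine skeletal induction.
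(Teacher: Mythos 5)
Your proposal is correct and follows essentially the same approach as the paper: a skeletal induction using the open cover $U_0, U_1$ from Lemma \ref{lem:inclusion-skeleta}, tom Dieck's local-to-global gluing theorem for the inductive step, and a final passage to the colimit via Lemma \ref{lem:inclusion-skeleta}. You spell out a couple of details the paper leaves implicit (identifying the maps on the pieces, and the $\pi_k$ bookkeeping at the end), but the argument is the same.
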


\begin{proof}
By Lemma \ref{lem:inclusion-skeleta}, it is enough to show that $\norm{f_\bullet}:\norm{X_\bullet}^{(n)}\to \norm{Y_\bullet}^{(n)}$ is a weak equivalence for each $n$, and this may be shown by induction on $n$. The case $n=0$ is trivial. For the induction step, consider the open sets $U_0^X, U_1^X \subset \norm{X_\bullet}^{(n+1)}$ from the proof of Lemma \ref{lem:inclusion-skeleta} and the analogous $U_0^Y, U_1^Y \subset \norm{Y_\bullet}^{(n+1)}$. By induction hypothesis, the restriction of $\norm{f_{\bullet}}$ to $U_0^X\to U_0^Y$ is a weak equivalence, and so is the restriction $U_1^X\to U_1^Y$ and $U_{0}^X \cap U_1^X \to U_{0}^Y \cap U_1^Y$. The inductive step then follows using \cite[Theorem 6.7.9]{tomDieck}.
\end{proof}

\begin{remark}
Theorem \ref{thm:levelwiseequivalence} is \emph{false} in general for the thin geometric realisation of simplicial spaces. This is the main reason why---even for simplicial spaces---it is often preferable to consider the fat geometric realisation. A concrete counterexample was given by Lawson in response to a question on MathOverflow \cite{Tyler}.
\end{remark}

Theorem \ref{thm:levelwiseequivalence} has the following useful generalisation.

\begin{lem}\label{lem:connecttivity-of-semisimplicial-amp}
Let $f_\bullet: X_\bullet \to Y_\bullet$ be a map of semi-simplicial spaces. If $f_p:X_p \to Y_p$ is $(k-p)$-connected for all $p$, then $\norm{f_\bullet}$ is $k$-connected.
\end{lem}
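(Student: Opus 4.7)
The plan is to induct on $n$, showing that $\norm{f_\bullet}:\norm{X_\bullet}^{(n)}\to\norm{Y_\bullet}^{(n)}$ is $k$-connected. By Lemma \ref{lem:inclusion-skeleta} and the fact that any map from a compact space to $\norm{X_\bullet}$ or $\norm{Y_\bullet}$ factors through a finite skeleton, this suffices. The base case $n=0$ is the hypothesis on $f_0$.

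For the inductive step I would take the open cover $\{U_0^X, U_1^X\}$ of $\norm{X_\bullet}^{(n+1)}$ and the analogous $\{U_0^Y, U_1^Y\}$ of $\norm{Y_\bullet}^{(n+1)}$ from the proof of Lemma \ref{lem:inclusion-skeleta}. Using the homotopy equivalences $U_0^X\simeq\norm{X_\bullet}^{(n)}$, $U_1^X\simeq X_{n+1}$, and $U_0^X\cap U_1^X\simeq X_{n+1}\times S^n$, the three restricted maps have connectivities $k$ (by the inductive hypothesis), $k-n-1$, and $k-n-1$ respectively. Invoking \cite[Theorem 6.7.9]{tomDieck} as in Theorem \ref{thm:levelwiseequivalence} only sees the minimum of these and would yield only $(k-n-1)$-connectedness, which is too weak. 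The additional input to exploit is the $n$-connectedness of the inclusion $U_0^X\cap U_1^X\hookrightarrow U_1^X$ (homotopy equivalent to $\id_{X_{n+1}}\times(S^n\hookrightarrow D^{n+1})$), reflecting that the cell being attached has dimension $n+1$; this is exactly what should account for the missing $n+1$ in the connectivity.

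The cleanest way to harvest this is by passing to cofibres. The quotient $\norm{X_\bullet}^{(n+1)}/\norm{X_\bullet}^{(n)}$ is homeomorphic to $(X_{n+1})_+\wedge(\Delta^{n+1}/\partial\Delta^{n+1})\simeq\Sigma^{n+1}(X_{n+1})_+$, and similarly for $Y$, so the induced map of cofibres is $\Sigma^{n+1}(f_{n+1})_+$, which is $k$-connected because reduced suspension raises connectivity by one and $f_{n+1}$ is $(k-n-1)$-connected. The inductive hypothesis and this cofibre calculation then provide the outer vertical arrows in a map of cofibration sequences, both $k$-connected, and a five-lemma style argument should deliver $k$-connectedness of the middle map $\norm{f_\bullet}^{(n+1)}$.

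The hard part will be making this five-lemma step rigorous without any simple-connectedness assumption on $\norm{X_\bullet}$ or $\norm{Y_\bullet}$, since cofibrations do not generally afford long exact sequences in homotopy. My fall-back would be to route through the spectral sequence of \S\ref{subsec:spectralsequence}: for any local coefficient system $\cL$ on $\norm{Y_\bullet}$, the hypothesis that $f_q$ is $(k-q)$-connected implies that $f_q$ induces an isomorphism on $H_p(-;\cL_q)$ for $p+q<k$ and a surjection for $p+q=k$, and this iso/surj pattern on $E^1$ persists to $E^\infty$, yielding an $H_*(-;\cL)$-equivalence through degree $k$. Combined with hands-on checks of the $\pi_0$ and (if $k\geq 1$) $\pi_1$ statements from the $0$- and $1$-skeleta, a relative Hurewicz argument with local coefficients then upgrades this to $k$-connectedness in homotopy.
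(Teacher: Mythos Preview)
Your diagnosis of why the direct open-cover argument fails is correct, and the spectral-sequence computation you outline does give the homology statement: for every local system $\cL$ on $\norm{Y_\bullet}$ one gets $H_i(\norm{Y_\bullet},\norm{X_\bullet};\cL)=0$ for $i\le k$ (most cleanly via the relative spectral sequence, whose $E^1_{p,q}=H_p(Y_q,X_q;\cL_q)$ vanishes for $p+q\le k$). The gap is the $\pi_1$ step. To upgrade a local-coefficient homology equivalence to $k$-connectedness you need $\norm{f_\bullet}$ to be a $\pi_1$-isomorphism when $k\ge 2$, and this cannot be read off ``from the $0$- and $1$-skeleta'': injectivity on $\pi_1$ requires the $2$-skeleton. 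Establishing the $\pi_1$-isomorphism for $\norm{f_\bullet}^{(2)}$ is itself a nontrivial gluing statement of exactly the kind you are trying to prove, and neither the cofibre sequence (which has no homotopy long exact sequence) nor the spectral sequence supplies it. So as written the argument is circular at this point.

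The paper's proof avoids this entirely by a factorisation trick rather than a cofibre or spectral-sequence argument. One factors $f_\bullet$ as
\[
X_\bullet \overset{j_\bullet}{\lra} W_\bullet \overset{g_\bullet}{\lra} Z_\bullet \overset{h_\bullet}{\lra} Y_\bullet,
\]
where $W_i=Y_i$ for $i<n$ and $W_n=X_n$; then $Z_\bullet$ is obtained from $W_\bullet$ by replacing $X_n$ by a space built from it by attaching cells of dimension $\ge k-n+1$ (a factorisation of $f_n$); and $h_\bullet$ is a levelwise weak equivalence. For $j_\bullet$ the top-degree map is the identity, so in the open cover the maps $U_1^X\to U_1^W$ and $U_0^X\cap U_1^X\to U_0^W\cap U_1^W$ are homeomorphisms, and tom~Dieck's theorem now gives the full $k$-connectedness (not $k-n-1$). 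For $g_\bullet$ one has $\norm{W_\bullet}^{(n-1)}=\norm{Z_\bullet}^{(n-1)}$ and the new cells, after multiplying by $\Delta^n$, have dimension $\ge k+1$, so $\norm{g_\bullet}^{(n)}$ is $k$-connected by inspection. This decomposition is what lets the argument stay entirely within elementary connectivity-of-pushouts reasoning and sidestep any separate $\pi_1$ analysis.
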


\begin{proof}
By Lemma \ref{lem:inclusion-skeleta} it is enough to show that $\norm{f_\bullet}^{(n)} : \norm{X_\bullet}^{(n)} \to \norm{Y_\bullet}^{(n)}$ is $k$-connected for each $n$. The case $n=0$ is trivial. For the induction step, we may as well suppose that $X_i = Y_i = \emptyset$ for $i > n$ and that $\norm{f_\bullet}^{(n-1)}$ is $k$-connected. We factorise $f_\bullet$ as 
\begin{equation}\label{eq:lem:connecttivity-of-semisimplicial-amp-proof}
X_\bullet \stackrel{j_\bullet}{\lra} W_\bullet \stackrel{g_\bullet}{\lra} Z_\bullet \stackrel{h_\bullet}{\lra} Y_\bullet
\end{equation}
as follows. The semi-simplicial space $W_\bullet$ has $W_i = Y_i$ for $i<n$, $W_n =X_n$ and $W_i = \emptyset$ for $i>n$. The face maps $W_n \to W_{n-1}$ are the compositions $f_{n-1} \circ d_i = d_i \circ f_n$, and the other face maps are the same as those for $X_\bullet$. The map $j_n$ is the identity, and $j_i = f_i$ for $i< n$. 

Then factorise $f_n$ as
$$f_n : X_n \overset{g_n}\lra Z_n \overset{h_n}\lra Y_n$$
where $h_n$ is a weak homotopy equivalence, and $Z_n$ is obtained from $X_n$ by attaching cells of dimension at least $(k-n+1)$. For $i < n$ let $Z_i = Y_i$, and for $i > n$ let $Z_i = \emptyset$. The map $g_i$ is the identity for $i<n$, and $h_i: Z_i \to Y_i$ is the identity as well. This yields the factorisation \eqref{eq:lem:connecttivity-of-semisimplicial-amp-proof}. 

The map $h_\bullet$ is a levelwise weak equivalence, and so $\norm{h_\bullet}$ is a weak equivalence by Theorem \ref{thm:levelwiseequivalence}. Moreover, $\norm{W_\bullet}^{(n-1)} = \norm{Z_\bullet}^{(n-1)}$ and the pair $(Z_n \times \Delta^n, Z_n \times \partial \Delta^n)$ is obtained from the pair $(W_n \times \Delta^n, W_n \times \partial \Delta^n)$ by attaching cells of dimension at least $(k+1)$, so $\norm{Z_\bullet}^{(n)}$ is obtained from $\norm{W_\bullet}^{(n)}$ by attaching cells of dimension at least $(k+1)$: in particular, $\norm{g_\bullet}:\norm{W_\bullet}^{(n)} \to \norm{Z_\bullet}^{(n)}$ is $k$-connected.
By the inductive hypothesis, $\norm{j_\bullet}^{(n-1)}: \norm{X_\bullet}^{(n-1)} \to \norm{W_\bullet}^{(n-1)}$ is $k$-connected and $j_n$ is the identity. Using the notation introduced in the proof of Theorem \ref{thm:levelwiseequivalence}, we get that $U_0^X \to U_0^W$ is $k$-connected, while $U_1^X \to U_1^W$ and $U_{0}^X \cap U_{1}^X\to U_{0}^W \cap U_{1}^W$ are weak equivalences. From \cite[Theorem 6.7.9]{tomDieck}, it follows that $\norm{j_\bullet}$ is $k$-connected. 
\end{proof}

Using this we can now prove the analogue of Lemma \ref{lem:AdjUnits} for semi-simplicial spaces, rather than semi-simplicial sets.

\begin{lem}\label{lem:AdjUnitsTop}
For each semi-simplicial space $X_\bullet$, the map $\norm{X_\bullet} \to \norm{E X_\bullet}$ is a weak homotopy equivalence.

\end{lem}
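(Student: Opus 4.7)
The plan is to reduce the claim to the semi-simplicial set case (Lemma \ref{lem:AdjUnits}) by resolving each space $X_p$ by its singular simplicial set. Define bi-semi-simplicial sets $Y_{p,q} := \Sing_q(X_p)$ and $Y'_{p,q} := \Sing_q(FEX_p)$, with bi-semi-simplicial structure in $(p,q)$ inherited from the face maps of $X_\bullet$ (respectively $FEX_\bullet$) and from the face maps of $\Sing_\bullet$, after forgetting the degeneracies of the latter.

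First, by Lemmas \ref{lem:RealSing} and \ref{lem:comparison-fat-to-thin-simplicialset}, for each fixed $p$ the evaluation $\norm{Y_{p,\bullet}} \to X_p$ is a weak homotopy equivalence, and similarly $\norm{Y'_{p,\bullet}} \to FEX_p$. Applying Theorem \ref{thm:levelwiseequivalence} in the $p$-direction, together with the commutativity of bi-semi-simplicial realisation \eqref{geometric-realization-bisemisimplicial}, produces weak equivalences
\[
\norm{Y_{\bullet,\bullet}} \xrightarrow{\sim} \norm{X_\bullet} \quad\text{and}\quad \norm{Y'_{\bullet,\bullet}} \xrightarrow{\sim} \norm{FEX_\bullet},
\]
which fit into a commuting square with the adjunction unit $X_\bullet \to FEX_\bullet$ and the induced map $Y_{\bullet,\bullet} \to Y'_{\bullet,\bullet}$.

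The key step, which I expect to be the main obstacle due to the required bookkeeping, is to identify, for each fixed $q$, the semi-simplicial set $Y'_{\bullet,q}$ with $FE(Y_{\bullet,q})$. Since each $\Delta^q$ is connected, $\Sing_q$ commutes with disjoint unions, giving on underlying sets
\[
Y'_{p,q} = \Sing_q\bigl(\textstyle\coprod_\alpha X_r\bigr) \cong \coprod_\alpha \Sing_q(X_r) = FE(Y_{\bullet,q})_p,
\]
where $\alpha$ ranges over surjections $[p] \twoheadrightarrow [r]$. I would then verify that this identification is natural in $q$ and respects the $p$-direction face maps on both sides, so that the natural comparison $Y_{\bullet,q} \to Y'_{\bullet,q}$ coincides with the adjunction unit $Y_{\bullet,q} \to FE(Y_{\bullet,q})$ for the semi-simplicial set $Y_{\bullet,q}$.

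Granting this identification, Lemma \ref{lem:AdjUnits} applied to $Y_{\bullet,q}$ for each fixed $q$ shows that $\norm{Y_{\bullet,q}} \to \norm{Y'_{\bullet,q}}$ is a homotopy equivalence, so Theorem \ref{thm:levelwiseequivalence} in the $q$-direction gives a weak equivalence $\norm{Y_{\bullet,\bullet}} \simeq \norm{Y'_{\bullet,\bullet}}$. Combining with the first step yields the desired weak equivalence $\norm{X_\bullet} \to \norm{FEX_\bullet} = \norm{EX_\bullet}$.
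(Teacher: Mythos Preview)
Your proposal is correct and is essentially the paper's own proof: both resolve $X_\bullet$ by the bi-semi-simplicial set $\Sing_\bullet X_\bullet$, use the identification $\Sing_q(EX_\bullet) \cong E(\Sing_q X_\bullet)$ (which you justify via connectedness of $\Delta^q$, while the paper simply asserts it), apply Lemma~\ref{lem:AdjUnits} levelwise in the singular direction, and conclude via Lemma~\ref{lem:RealSing} and Theorem~\ref{thm:levelwiseequivalence}. The only difference is that you spell out the bookkeeping for the key identification in more detail than the paper does.
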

\begin{proof}
Consider the bi-semi-simplicial set $\Sing_p(X_q)$, with $\Sing_p(EX_\bullet) = E (\Sing_p X_\bullet)$ so giving a commutative square
\begin{equation*}
\xymatrix{
\norm{\Sing_\bullet X_\bullet} \ar[d] \ar[r]& \norm{E (\Sing_\bullet X_\bullet)} \ar@{=}[r]& \norm{\Sing_\bullet(EX_\bullet)} \ar[d]\\
\norm{X_\bullet} \ar[rr]&& \norm{EX_\bullet}.
}
\end{equation*}
The vertical maps are weak equivalences by Lemma \ref{lem:RealSing} and Theorem \ref{thm:levelwiseequivalence}, and the top map is a weak equivalence by Lemma \ref{lem:AdjUnits} and Theorem \ref{thm:levelwiseequivalence}; hence the bottom map is a weak equivalence. 
\end{proof}

\begin{defn}
A commutative square 
\[
 \xymatrix{X_1 \ar[d]^{f} \ar[r]^{k_1} & Y_1 \ar[d]^{g}\\
 X_0 \ar[r]^{k_0} & Y_0
 }
\]
is called \emph{homotopy cartesian} if for each basepoint $x \in X_0$, the map $\hofib_x (f)  \to \hofib_{k_0(x)} (g)$, induced by $k_0$ and $k_1$, is a weak homotopy equivalence. 
\end{defn}

\begin{rem}\label{remark.hocartesianness-symmetric}
Equivalently, one can express this condition by saying that for all $y \in Y_1$, the induced map $\hofib_y (k_1) \to \hofib_{g(y)} (k_0)$ is a weak homotopy equivalence. 

More symmetrically, one can express this condition by saying that the canonical map from $X_1$ to the \emph{homotopy fibre product}
$$X_0 \times_{Y_0}^h Y_1 := \{(x_0, y_1, \gamma) \in X_0 \times Y_1 \times \mathrm{map}([0,1], Y_0) \, \vert \, \gamma(0) = k_0(x_0), \gamma(1) = g(y_1)\}$$
is a weak homotopy equivalence. 
\end{rem}

Let us record the 2-out-of-3 properties enjoyed by homotopy cartesian squares. Given adjacent commutative squares
\begin{equation*}
\xymatrix{
X_1 \ar[r]^{k_1} \ar[d]^f& Y_1 \ar[r]^{l_1} \ar[d]^g & Z_1 \ar[d]^h\\
X_0 \ar[r]^{k_0}& Y_0  \ar[r]^{l_0} & Z_0
}
\end{equation*}
then
\begin{enumerate}[(i)]
\item if the left and right squares are homotopy cartesian, the outer square is homotopy cartesian;

\item is the right and outer squares are homotopy cartesian, the left square is homotopy cartesian;

\item if the left and outer squares are homotopy cartesian, and $k_0$ is 0-connected, the right square is homotopy cartesian.
\end{enumerate}

\begin{defn}\label{defn:hocartesian.morpism}
A map $f_\bullet: X_\bullet \to Y_\bullet$ of semi-simplicial spaces is called \emph{homotopy cartesian} if for each $p \geq 1 $ and each $0 \leq i \leq p$, the square 
\begin{equation}\label{diag:hocartesian.morpism}
\begin{gathered}
\xymatrix{
X_p \ar[d]^{f_p} \ar[r]^{d_i}  & X_{p-1} \ar[d]^{f_{p-1}}\\
Y_p \ar[r] \ar[r]^{d_i} & Y_{p-1}
}
\end{gathered}
\end{equation}
is homotopy cartesian.
\end{defn}

For each $p$ there are $p+1$ conditions to be checked. The next lemma shows that the number of conditions to be checked can be drastically reduced.

\begin{lem}\label{lem:minimal-checking-for-hocartesianness}
To prove that $f_\bullet: X_\bullet \to Y_\bullet$ is homotopy cartesian, it is enough to verify that \eqref{diag:hocartesian.morpism} is homotopy cartesian for those $(p,i)$ with $i=0$ and for $(p,i)=(1,1)$. Dually, it is enough to verify that \eqref{diag:hocartesian.morpism} is homotopy cartesian for those $(p,i)$ with $i=p$ and for $(p,i)=(1,0)$.
\end{lem}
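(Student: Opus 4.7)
The plan is to induct on $p$, combining the simplicial identity $d_0 d_{i+1} = d_i d_0$ (a special case of $d_a d_b = d_{b-1} d_a$ for $a<b$) with properties~(i) and~(ii) of the 2-out-of-3 list for homotopy cartesian squares recorded just before Definition~\ref{defn:hocartesian.morpism}. The base case $p=1$ is immediate: the only squares \eqref{diag:hocartesian.morpism} to check at this level are $(1,0)$ and $(1,1)$, both of which are explicitly in the hypotheses.

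For the inductive step, fix $p \geq 2$ and assume that \eqref{diag:hocartesian.morpism} is homotopy cartesian for every $(q, j)$ with $q<p$; the square $(p,0)$ is homotopy cartesian by hypothesis, so it remains to handle $(p, i+1)$ for $0 \leq i \leq p-1$. Horizontally concatenating two adjacent squares of type \eqref{diag:hocartesian.morpism} along $X_p \stackrel{d_0}{\to} X_{p-1} \stackrel{d_i}{\to} X_{p-2}$ gives a rectangle whose two inner squares are $(p,0)$ and $(p-1,i)$, homotopy cartesian by hypothesis and by the inductive hypothesis respectively; property~(i) then says that the outer rectangle is homotopy cartesian. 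The simplicial identity $d_i d_0 = d_0 d_{i+1}$ means this outer rectangle coincides with the one obtained by factoring the same pair of vertical maps instead through $X_p \stackrel{d_{i+1}}{\to} X_{p-1} \stackrel{d_0}{\to} X_{p-2}$, whose inner decomposition is $(p,i+1)$ followed by $(p-1,0)$. Since $(p-1,0)$ is homotopy cartesian by hypothesis, property~(ii) now forces $(p, i+1)$ to be homotopy cartesian, completing the induction.

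The dual statement --- reducing to squares with $i=p$ together with $(1,0)$ --- is proved by the mirror argument, using the identity $d_i d_p = d_{p-1} d_i$ for $i<p$ to swap $d_p$ past $d_i$. One pairs $(p,p)$ (assumed) with $(p-1,i)$ (known by induction) to obtain a homotopy cartesian outer rectangle via~(i), then compares against the decomposition $(p,i)$ followed by $(p-1, p-1)$ and applies~(ii), using that $(p-1,p-1)$ is known by induction for $p \geq 2$ and that the case $p=1$ is covered by the explicit hypothesis $(1,0)$.

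I do not anticipate a genuine obstacle: properties~(i) and~(ii) hold with no connectivity hypothesis, so the argument is purely formal once the correct simplicial identity is identified. The only subtle point worth flagging is that the extra base-case square $(1,1)$ (respectively $(1,0)$ in the dual) is genuinely necessary, since it is not reachable from the remaining assumed squares by the horizontal concatenation manoeuvre that drives the inductive step.
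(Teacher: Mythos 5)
Your proof is correct, and its organization is cleaner than the paper's, while resting on exactly the same two ingredients: the simplicial identity $d_id_j=d_{j-1}d_i$ for $i<j$ and the 2-out-of-3 properties (i) and (ii) for homotopy cartesian squares. The paper proceeds in three stages: first a commutative cube (comparing the two factorisations $d_1 \circ d_0^{p-1}$ and $d_0^{p-1}\circ d_p$ of the long composite $X_p\to X_0$) is used to establish that the last-face squares $(p,p)$ are homotopy cartesian; from this it deduces that every vertex map $\eta^*:X_p\to X_0$ gives a homotopy cartesian square; and finally it cancels against these to handle a general $d_i$. Your induction on $p$ collapses all of this into a single local two-step zig-zag per level, using $d_id_0=d_0d_{i+1}$ to compare the factorisation $(p,0)$-then-$(p-1,i)$ with $(p,i+1)$-then-$(p-1,0)$, so you never need to pass through $X_0$ or draw the cube. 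This buys a more economical argument and makes the role of the extra assumption $(1,1)$ transparent (it is the base-case instance needed when $i=p-1$). One tiny imprecision in your dual case: $(p-1,p-1)$ is among the \emph{assumed} squares (it is an $i=p$ square for the index $p-1$), not something supplied by the inductive hypothesis, though this does not affect the correctness of the argument.
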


\begin{proof}
We treat only the first case. Consider the commutative cube
\begin{equation*}
\xymatrix{
 & X_k \ar'[d]^-{f_k}[dd] \ar[dl]_-{d_k} \ar[rr]^-{d_0^{k-1}} & & X_1 \ar[dd]^-{f_1} \ar[dl]^-{d_1}\\
X_{k-1} \ar[dd]^-{f_{k-1}} \ar[rr]_-{d_0^{k-1}} & & X_0 \ar[dd]^-{f_0}\\
 & Y_k \ar[dl]_-{d_k} \ar'[r]^-{d_0^{k-1}}[rr] & & Y_1 \ar[dl]^-{d_1}\\
Y_{k-1} \ar[rr]^-{d_0^{k-1}} & & Y_0.
}
\end{equation*}
By hypothesis the front, back, and right faces are homotopy cartesian, so the left face is too. But each structure map $X_p \to X_0$ can be written as the composition of maps of the form $d_0$ and $d_k : X_k \to X_{k-1}$. Therefore, for each $\eta: [0] \to [p]$, the square
\[
\xymatrix{
X_p \ar[d]^{f_p} \ar[r]^{\eta^* }  & X_{0} \ar[d]^{f_{0}}\\
Y_p \ar[r] \ar[r]^{\eta^*} & Y_{0}
}
\]
is homotopy cartesian. The result then follows easily. 
\end{proof}

The following is due to Segal \cite[Proposition 1.6]{Segal}.

\begin{thm}\label{lem:simplicial-hocartesianness}
Let $f: X_{\bullet} \to Y_{\bullet}$ be a homotopy cartesian map of semi-simplicial spaces. Then the square
\[
\xymatrix{
X_0 \ar[d]^{f_0} \ar[r] & \norm{X_{\bullet}}\ar[d]^{\norm{f_{\bullet}}}\\
Y_0 \ar[r] & \norm{Y_{\bullet}}
}
\]
is also homotopy cartesian.
\end{thm}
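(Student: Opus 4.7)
The plan is to prove, by induction on $n \ge 0$, the skeletal statement $(\star_n)$ that the square
\[
\xymatrix{
X_0 \ar[d]^{f_0} \ar[r] & \norm{X_\bullet}^{(n)} \ar[d]\\
Y_0 \ar[r] & \norm{Y_\bullet}^{(n)}
}
\]
is homotopy cartesian, and then to pass to the colimit $n\to\infty$. Since a map from a compact Hausdorff space to $\norm{X_\bullet}$ factors through some finite skeleton (noted just below \eqref{filtration-as-pushout}) and the skeletal inclusions are cofibrations, for each $y\in Y_0$ the homotopy fibre $\hofib_y(\norm{f_\bullet})$ is the sequential colimit of the $\hofib_y(\norm{f_\bullet}^{(n)})$ along weak equivalences; each of these is weakly equivalent to $\hofib_y(f_0)$ by $(\star_n)$, and so is the colimit.

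The base case $n=0$ is trivial. For the inductive step I would use the pushout
\[
\norm{X_\bullet}^{(n)} = \norm{X_\bullet}^{(n-1)} \cup_{X_n \times \partial \Delta^n} X_n \times \Delta^n
\]
and its analogue for $Y$ to form a commutative cube whose top and bottom faces are homotopy pushouts. Mather's cube theorem, in the form: when the top and bottom of a cube are homotopy pushouts, having two adjacent vertical faces homotopy cartesian forces the remaining two to be homotopy cartesian, would then, once verified on the appropriate pair, give that the skeletal-inclusion face
\[
\xymatrix{\norm{X_\bullet}^{(n-1)} \ar[r] \ar[d] & \norm{X_\bullet}^{(n)} \ar[d]\\ \norm{Y_\bullet}^{(n-1)} \ar[r] & \norm{Y_\bullet}^{(n)}}
\]
is homotopy cartesian; pasting with $(\star_{n-1})$ via the composition rule (i) for homotopy cartesian squares then yields $(\star_n)$.

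The two vertical faces to verify share the edge $X_n \times \partial \Delta^n \to Y_n \times \partial \Delta^n$. One of them, comparing $X_n \times \partial\Delta^n \to X_n \times \Delta^n$ with $Y_n \times \partial\Delta^n \to Y_n \times \Delta^n$, is automatic since its vertical maps are products of $f_n$ with identities and both columns have homotopy fibre $\hofib(f_n)$. The \emph{attaching-map} face
\[
\xymatrix{X_n \times \partial \Delta^n \ar[r] \ar[d] & \norm{X_\bullet}^{(n-1)} \ar[d]\\ Y_n \times \partial \Delta^n \ar[r] & \norm{Y_\bullet}^{(n-1)}}
\]
is the main obstacle. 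My plan is to verify it by a secondary induction of the same shape on the skeletal filtration of $\partial \Delta^n$; the base case (the $0$-skeleton of $\partial \Delta^n$) reduces to the assertion that every chain of face maps $X_n \to X_0$ sits in a homotopy cartesian square over the corresponding chain $Y_n \to Y_0$, which follows from the hypothesis by Lemma \ref{lem:minimal-checking-for-hocartesianness} and iterated pasting of homotopy cartesian squares. Alternatively, the secondary induction can be absorbed into the main one by strengthening $(\star_n)$ to include the homotopy-cartesianness of the square for each cell inclusion $X_p \hookrightarrow \norm{X_\bullet}^{(n)}$ simultaneously.
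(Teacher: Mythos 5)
Your overall strategy matches the paper's first proof: induct on skeleta, treat the skeletal inclusion via Mather's cube theorem, and identify the attaching-map face as the crux, with the key observation being that composed face-map squares are homotopy cartesian by iterated pasting (your ``base case''). The colimit step you spell out is elided in the paper, and is fine given that skeletal inclusions are closed cofibrations.

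Where you diverge from the paper is in the attaching-map face, and here the paper has a cleaner route than your proposed secondary induction. Rather than climbing the skeleta of $\partial\Delta^n$, observe that the inclusion $Y_n \times \{v_0\} \hookrightarrow Y_n \times \partial\Delta^n$ is $0$-connected once $n\ge 2$. The square on the left of
\[
\xymatrix{X_n \times \{v_0\} \ar[r]\ar[d] & X_n \times \partial\Delta^n \ar[r]\ar[d] & \norm{X_\bullet}^{(n-1)}\ar[d]\\
Y_n \times \{v_0\} \ar[r] & Y_n \times \partial\Delta^n \ar[r] & \norm{Y_\bullet}^{(n-1)}}
\]
is homotopy cartesian trivially (it is $f_n$ crossed with a map of spaces), and the outer rectangle factors as the composed-face-maps square $X_n\to X_0$ over $Y_n \to Y_0$ followed by the square from $(\star_{n-1})$, both homotopy cartesian, so the outer square is too. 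By the $2$-out-of-$3$ property (iii), the right square is homotopy cartesian. This does in one step what your secondary induction does in $n-1$ steps; for $n=1$, where $\partial\Delta^1$ is discrete, one uses the hypothesis for $d_0$ and $d_1$ directly (this is your base case, and is the only step needed). So your approach is correct in spirit, but the secondary induction --- whose inductive step you left unwritten, and whose naive version (a parallel Mather cube on the top and bottom faces $X_n \times sk_{k-1}\partial\Delta^n$ etc.) only recovers trivially $f_n$-product faces --- is best replaced by a single application of $2$-out-of-$3$. If you do want to run it, the inductive step is itself a $2$-out-of-$3$ argument: $X_n \times sk_{k-1} \to X_n \times sk_k$ over $Y$ is trivially homotopy cartesian, $X_n\times sk_{k-1}\to \norm{X_\bullet}^{(n-1)}$ over $Y$ is the secondary hypothesis, and $Y_n \times sk_{k-1} \to Y_n\times sk_k$ is $0$-connected for $k\geq 1$; but this is strictly more work. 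Finally, the citation of Lemma \ref{lem:minimal-checking-for-hocartesianness} is slightly misplaced --- that lemma runs in the opposite direction (reducing the hypotheses one must check); what you want is simply its argument (iterated pasting) applied to the full hypothesis.
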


\begin{proof}[First proof]
We prove the result by induction on skeleta. There are commutative cubes
\[
 \xymatrix{
& & X_p \times \{v_0\} \ar'[d]'[dd][ddd]^-{f_p \times \{v_0\}}\ar[ld] \ar[rrr]^{d_0^p} &&& X_0 \ar[ld] \ar[ddd]^-{f_0}\\
& X_p \times \partial \Delta^p \ar[ld] \ar[rrr] \ar'[d][ddd]^-{f_p \times \partial \Delta^p} & & & \norm{X_\bullet}^{(p-1)} \ar[ddd]^-{\norm{f_\bullet}^{(p-1)}} \ar[ld]\\
X_p \times \Delta^p  \ar[rrr] \ar[ddd]^-{f_p \times \Delta^p} & & & \norm{X_\bullet}^{(p)} \ar[ddd]^-{\norm{f_\bullet}^{(p)}}\\
& & Y_p \times \{v_0\} \ar[ld] \ar'[r]'[rr]^-{d_0^p}[rrr] &&& Y_0 \ar[ld]\\
& Y_p \times \partial \Delta^p \ar[ld]\ar'[rr][rrr] & & & \norm{Y_\bullet}^{(p-1)} \ar[ld]\\
Y_p \times \Delta^p  \ar[rrr] & & & \norm{Y_\bullet}^{(p)}.
 }
\]

Consider first the back cube. If $p=1$ then the front face is homotopy cartesian by hypothesis. If $p>1$ then the right-hand face is homotopy cartesian by inductive assumption, the left-hand face is homotopy cartesian, the back face is homotopy cartesian by hypothesis, and $Y_p \times \{v_0\} \to Y_p \times \partial \Delta^p$ is 0-connected: thus by the 2-out-of-3 property of homotopy cartesian squares the front face of the back cube is homotopy cartesian.

Consider now the front cube. The left-hand face is homotopy cartesian and by the above the back face is too. The top and bottom faces are homotopy co-cartesian, so this cube satisfies the hypotheses of Mather's first cube theorem \cite{Mather}. Thus the right-hand face of the front cube is homotopy cartesian, and hence the right-hand face of the outer cube is also homotopy cartesian, as required.
\end{proof}
\begin{proof}[Second proof]
First consider the case where each $f_p$ is a fibration. In this case, the lemma follows from the fact that the geometric realisation $\norm{f_\bullet}:\norm{X_\bullet} \to \norm{Y_\bullet}$ is a quasifibration, which in turn follows from applying the Dold--Thom criterion \cite[Satz 2.2, Hilfssatz 2.10 and Satz 2.12]{DoldThom} (a convenient reference is \cite[Lemma 4.K.3]{Hatcher}).

In the general case, we factor $f_p$ functorially as a composition $X_p \stackrel{h_p}{\to} Z_p \stackrel{g_p}{\to} Y_p$ with a weak equivalence $h_p$ and a fibration $g_p$. Then $Z_\bullet$ is a semi-simplicial space, and $h_\bullet$, $g_\bullet$ are semi-simplicial maps. 
In the diagram
\[
\xymatrix{
X_p \ar[d]^{d_i} \ar[r]^{h_p} & Z_p \ar[d]^{d_i}\ar[r]^{g_p} & Y_p \ar[d]^{d_i}\\
X_{p-1} \ar[r]^{h_{p-1}} & Z_{p-1} \ar[r]^{g_{p-1}} & Y_{p-1},
}
\]
the maps $h_p$ and $h_{p-1}$ are weak homotopy equivalences, and it follows that the right square is homotopy cartesian.  The lower square in 
\[
\xymatrix{
X_0 \ar[r] \ar[d]^{h_0}& \norm{X_\bullet}\ar[d]^{\norm{h_\bullet}}\\
Z_0 \ar[r] \ar[d]^{g_0}& \norm{Z_\bullet}\ar[d]^{\norm{g_\bullet}}\\
Y_0 \ar[r] & \norm{Y_\bullet}
}
\]
is homotopy cartesian by the first part of the proof, and the upper square is homotopy cartesian as $h_0$ and $\norm{h_\bullet}$ are both weak equivalences, by Theorem \ref{thm:levelwiseequivalence}.
\end{proof}

\begin{lem}\label{lem:augmented-cartesian}
Let $\eps:X_\bullet\to X_{-1}$ and $\eps:Y_\bullet\to Y_{-1}$ be augmented semi-simplicial spaces and let $(f_\bullet,f):(X_{\bullet},X_{-1})\to (Y_{\bullet},Y_{-1})$ be a map of augmented semi-simplicial spaces. If for each $p\geq 0$ the square
\[
\xymatrix{
X_p \ar[r]^{f_p} \ar[d]^{\eps_p} & Y_p \ar[d]^{\eps_p}\\
X_{-1} \ar[r]^{f} & Y_{-1}
}
\]
is homotopy cartesian, then so is the square
\[
\xymatrix{
\norm{X_\bullet} \ar[r]^{\norm{f_\bullet}} \ar[d]^{\norm{\eps_\bullet}} & \norm{Y_\bullet} \ar[d]^{\norm{\eps_\bullet}}\\
X_{-1} \ar[r]^{f} & Y_{-1}.
}
\]
\end{lem}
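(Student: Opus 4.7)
The strategy is to deduce the lemma from Theorem \ref{lem:simplicial-hocartesianness} by two applications of the $2$-out-of-$3$ calculus for homotopy cartesian squares: first promote the hypothesis into the statement that $f_\bullet$ is a homotopy cartesian semi-simplicial map, then splice Segal's conclusion with the hypothesis at $p=0$ to peel off the augmentation.

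For the first step, I would verify that $f_\bullet$ is homotopy cartesian in the sense of Definition \ref{defn:hocartesian.morpism}. For each $p \geq 1$ and $0 \leq i \leq p$ consider the horizontal composite
\[
\xymatrix{
X_p \ar[r]^{d_i} \ar[d]_{f_p} & X_{p-1} \ar[r]^{\eps_{p-1}} \ar[d]_{f_{p-1}} & X_{-1} \ar[d]^{f} \\
Y_p \ar[r]^{d_i} & Y_{p-1} \ar[r]^{\eps_{p-1}} & Y_{-1}.
}
\]
Compatibility of augmentations with face maps identifies the outer rectangle with the hypothesis square at index $p$, and the right square is the hypothesis square at index $p-1$; both are homotopy cartesian. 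Property (ii) of the $2$-out-of-$3$ calculus then shows the left square is homotopy cartesian, so $f_\bullet$ is homotopy cartesian. Theorem \ref{lem:simplicial-hocartesianness} therefore gives that
\[
\xymatrix{X_0 \ar[r] \ar[d]_{f_0} & \norm{X_\bullet} \ar[d]^{\norm{f_\bullet}} \\ Y_0 \ar[r] & \norm{Y_\bullet}}
\]
is homotopy cartesian.

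For the second step, I would consider the horizontal composite
\[
\xymatrix{
X_0 \ar[r] \ar[d]_{f_0} & \norm{X_\bullet} \ar[r]^{\norm{\eps_\bullet}} \ar[d]_{\norm{f_\bullet}} & X_{-1} \ar[d]^{f} \\
Y_0 \ar[r] & \norm{Y_\bullet} \ar[r]^{\norm{\eps_\bullet}} & Y_{-1}.
}
\]
The composite $X_0 \hookrightarrow \norm{X_\bullet} \xrightarrow{\norm{\eps_\bullet}} X_{-1}$ is $\eps_0$ (and similarly for $Y$), so the outer rectangle coincides with the hypothesis square at $p=0$ and is homotopy cartesian; the left square is homotopy cartesian by the previous paragraph. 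The map $Y_0 \hookrightarrow \norm{Y_\bullet}$ is $0$-connected, since any point $(y,t)\in Y_n\times\Delta^n$ may be joined by the straight-line path to $(y,e_0)$, which is identified with an element of $Y_0$ under the equivalence relation (and the corner case $Y_0=\emptyset$ forces all $Y_p=\emptyset$ and is vacuous). Property (iii) of the $2$-out-of-$3$ calculus now yields that the right square is homotopy cartesian, which is exactly the conclusion of the lemma.

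The main obstacle is conceptual: correctly stacking the two composite rectangles so that the $2$-out-of-$3$ principle converts the ``column'' hypothesis (squares over the augmentation) first into a ``row'' statement (that $f_\bullet$ is homotopy cartesian) and then back into a column statement at the level of realisations. The genuine work is done by Theorem \ref{lem:simplicial-hocartesianness}; the only additional analytic input is the essentially trivial $0$-connectedness of $Y_0\to\norm{Y_\bullet}$.
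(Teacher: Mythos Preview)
Your proof is correct and follows essentially the same argument as the paper: first use the $2$-out-of-$3$ property on the composite rectangle with rows $X_p \to X_{p-1} \to X_{-1}$ to show $f_\bullet$ is homotopy cartesian, then apply Theorem \ref{lem:simplicial-hocartesianness}, and finally use the $2$-out-of-$3$ property again on the composite $X_0 \to \norm{X_\bullet} \to X_{-1}$ together with the $0$-connectedness of $Y_0 \hookrightarrow \norm{Y_\bullet}$. The paper's proof is identical in structure; you have additionally supplied the elementary justification for the $0$-connectedness of $Y_0 \to \norm{Y_\bullet}$, which the paper simply asserts.
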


\begin{proof}
The diagram
\begin{equation*}
\xymatrix{
X_p \ar[d]^{f_p} \ar[r]^{d_i } & X_{p-1}\ar[d]^{f_{p-1}} \ar[r]^{\eps_{p-1}} & X_{-1}\ar[d]^{f}\\
Y_p \ar[r]^{d_i} & Y_{p-1} \ar[r]^{\eps_{p-1}} & Y_{-1}
}
\end{equation*}
has right-hand and outer squares homotopy cartesian by hypothesis, so the left-hand square is also homotopy cartesian. Thus the map $f_\bullet$ is homotopy cartesian and we can apply Theorem \ref{lem:simplicial-hocartesianness}, which shows that the left-hand square in
\[
\xymatrix{
X_0 \ar[r]^{\iota} \ar[d]^{f_0} & \norm{X_\bullet} \ar[d]^{\norm{f_\bullet}} \ar[r]^{\norm{\eps_\bullet}} & X_{-1} \ar[d]^{f}\\
Y_0 \ar[r]^{\iota} & \norm{Y_\bullet} \ar[r]^{\norm{\eps_\bullet}} & Y_{-1}
}
\]
is homotopy cartesian. As $\norm{\eps_\bullet} \circ \iota = \eps_0$ the outer square is homotopy cartesian by hypothesis, and $\iota: Y_0 \to \norm{Y_\bullet}$ is $0$-connected, so the right-hand square is also homotopy cartesian as required.
\end{proof}

\begin{lem}\label{lem:augmented-fibration}
Let $\eps_\bullet:X_\bullet\to X_{-1}$ be an augmented semi-simplicial space such that each $\eps_p:X_p \to X_{-1}$ is a quasifibration. Then for each $x \in X_{-1}$, the natural map
\[
\norm{\eps^{-1}_\bullet (x)} \lra \hofib_{x} \norm{\eps_\bullet}
\]
is a weak homotopy equivalence.
\end{lem}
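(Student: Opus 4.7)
The plan is to reduce this to Lemma \ref{lem:augmented-cartesian}, where the heavy lifting has already been done. The key observation is that the levelwise preimages $\epsilon_p^{-1}(x) \subset X_p$ assemble, via the restrictions of the face maps of $X_\bullet$ (which preserve these preimages since $\epsilon_{p-1} \circ d_i = \epsilon_p$), into a semi-simplicial space augmented over the single point $\{x\}$. The inclusion $(\epsilon_\bullet^{-1}(x), \{x\}) \hookrightarrow (X_\bullet, X_{-1})$ is then a map of augmented semi-simplicial spaces.

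I would first verify that for each $p \geq 0$, the square
\[
\xymatrix{
\epsilon_p^{-1}(x) \ar[r] \ar[d] & X_p \ar[d]^{\epsilon_p}\\
\{x\} \ar[r] & X_{-1}
}
\]
is homotopy cartesian. This is precisely the defining property of a quasifibration: the strict fibre maps by a weak homotopy equivalence to the homotopy fibre. (Concretely, in the triangle $\epsilon_p^{-1}(x) \to \hofib_x(\epsilon_p) \to \{x\}$ the first arrow is a weak equivalence, and by Remark \ref{remark.hocartesianness-symmetric} this is equivalent to the square being homotopy cartesian.)

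Now I apply Lemma \ref{lem:augmented-cartesian} to the map of augmented semi-simplicial spaces above to conclude that the outer square
\[
\xymatrix{
\norm{\epsilon_\bullet^{-1}(x)} \ar[r] \ar[d] & \norm{X_\bullet} \ar[d]^{\norm{\epsilon_\bullet}}\\
\{x\} \ar[r] & X_{-1}
}
\]
is homotopy cartesian. Since the homotopy fibre of the left-hand vertical map over the unique point of $\{x\}$ is just $\norm{\epsilon_\bullet^{-1}(x)}$, being homotopy cartesian unwinds to exactly the statement that the natural comparison map $\norm{\epsilon_\bullet^{-1}(x)} \to \hofib_x \norm{\epsilon_\bullet}$ is a weak homotopy equivalence, which is what we wanted.

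There is no real obstacle here; the only thing one must check is that the fibrewise face maps really do define a semi-simplicial space $\epsilon_\bullet^{-1}(x)$ and that the comparison map in the conclusion agrees with the one arising from homotopy cartesianness, both of which are immediate from the definitions. All the delicate inductive work (pasting the hypotheses over each level into a statement about the realisation) is precisely what Lemma \ref{lem:augmented-cartesian} was set up to do.
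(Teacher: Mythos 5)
Your proof is correct and is essentially identical to the paper's: both reduce the statement to Lemma \ref{lem:augmented-cartesian} by observing that the levelwise squares comparing $\epsilon_p^{-1}(x) \to \{x\}$ with $\epsilon_p : X_p \to X_{-1}$ form a homotopy cartesian map of augmented semi-simplicial spaces, which is exactly what the quasifibration hypothesis gives. (One small simplification: you don't need Remark \ref{remark.hocartesianness-symmetric} here, since the definition of homotopy cartesian applied directly to the square, with the unique basepoint of $\{x\}$, already asks that $\epsilon_p^{-1}(x) \to \hofib_x(\epsilon_p)$ be a weak equivalence.)
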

\begin{proof}
The diagrams
\[
\xymatrix{
\eps_p^{-1}(x) \ar[d] \ar[r] & X_p \ar[d]^{\eps_p}\\
\{x\} \ar[r] & X_{-1}
}
\]
form a map of augmented semi-simplicial spaces, and by assumption this map is homotopy cartesian. The statement then follows from Lemma \ref{lem:augmented-cartesian}.
\end{proof}

\begin{cor}\label{cor:constant-simplicial-space}
Let $X$ be a topological space and consider the constant semi-simplicial space $X_\bullet$ ($X_p:=X$ and all face maps are the identity). Then the inclusion $\iota:X =\norm{X_\bullet}^{(0)} \to \norm{X_\bullet}$ is a weak equivalence. 
\end{cor}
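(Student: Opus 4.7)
The plan is to augment the constant semi-simplicial space $X_\bullet$ over $X_{-1}:=X$ via $\eps_p:=\id_X$ and then apply the extra degeneracy argument of Lemma \ref{lem:ExtraDeg}. The augmentation relations $\eps_p \circ d_i = \eps_{p-1}$ hold trivially because every face map is the identity.

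For the extra degeneracy, I would take $h_{p+1}:=\id_X:X_p \to X_{p+1}$ for each $p\geq -1$. Checking the three conditions of Lemma \ref{lem:ExtraDeg} is immediate: $d_{p+1}h_{p+1}=\id_X$, $d_i h_{p+1}=\id_X=h_p d_i$ for $0\leq i<p+1$, and $\eps_0 h_0=\id_X$. Thus Lemma \ref{lem:ExtraDeg} gives that the augmentation map $\norm{\eps_\bullet}:\norm{X_\bullet}\to X$ is a homotopy equivalence.

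Finally, since the inclusion $\iota:X=\norm{X_\bullet}^{(0)}\to \norm{X_\bullet}$ satisfies $\norm{\eps_\bullet}\circ \iota = \eps_0 = \id_X$, the two-out-of-three property (or the observation that a section of a homotopy equivalence is itself a homotopy equivalence) shows that $\iota$ is a weak homotopy equivalence, as required. There is no real obstacle here; the only mildly subtle point is choosing the right augmentation so that the constant identity maps qualify as an extra degeneracy, after which everything is a formal verification.
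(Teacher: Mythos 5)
Your proof is correct, and it takes a genuinely different and in fact more elementary route than the paper. The paper also augments $X_\bullet$ over $X$ via the identity maps and also observes that $\norm{\eps_\bullet}\circ\iota=\id_X$, but it then establishes that $\norm{\eps_\bullet}$ is a weak equivalence via Lemma \ref{lem:augmented-fibration}: the maps $\eps_p$ are (quasi)fibrations with fibre the terminal semi-simplicial space, whose realisation is contractible, so each homotopy fibre of $\norm{\eps_\bullet}$ is weakly contractible. That lemma sits on top of Lemma \ref{lem:augmented-cartesian} and ultimately on Segal's Theorem \ref{lem:simplicial-hocartesianness}, whose proof invokes either Mather's cube theorem or the Dold--Thom quasifibration criterion. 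Your argument instead produces an explicit extra degeneracy $h_{p+1}=\id_X$, which manifestly satisfies the identities of Lemma \ref{lem:ExtraDeg}, so that $\norm{\eps_\bullet}$ is an honest homotopy equivalence by a direct prism homotopy. You then conclude via the two-out-of-three property. Your route buys two things: it avoids the cube-theorem/quasifibration machinery entirely, and it yields the stronger conclusion that $\iota$ is a genuine homotopy equivalence (being a section, and hence homotopic to a homotopy inverse, of the homotopy equivalence $\norm{\eps_\bullet}$), not merely a weak one. The paper's route has the modest pedagogical virtue of illustrating Lemma \ref{lem:augmented-fibration} in a trivial case, but your proof is shorter and uses less.
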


\begin{proof}
The identity map(s) $X_p \to X$ form an augmentation $\eps_\bullet:X_\bullet \to X$ and the composition $\norm{\epsilon_\bullet} \circ \iota$ is the identity. The semi-simplicial space $\eps^{-1}_\bullet (x)$ is the terminal semi-simplicial space and hence has contractible geometric realisation. It then follows from Lemma \ref{lem:augmented-fibration} that $\norm{\eps_\bullet}$ is a weak homotopy equivalence, whence the claim follows.
\end{proof}

The following result is due to Segal \cite[Proposition 1.5]{Segal}, though we have generalised the formulation a little. It plays a key role in his theory of $\Gamma$-spaces (which will be used in e.g.\ \cite{ERW17}), and is also a key ingredient in \cite{GRW1}.

\begin{thm}\label{thm:SegalDeltaSpace}
Let $X_\bullet$ be a semi-simplicial space and assume that 
\begin{enumerate}[(i)]
\item $X_0 \simeq *$.
\item The map $\kappa_p:X_p \to (X_1)^p$ given by $(\iota_1^*, \ldots, \iota_p^*)$, where $\iota_j:[1] \to [p]$ is the map $0 \mapsto j-1$, $1 \mapsto j$, is a weak homotopy equivalence.
\end{enumerate}
Then 
\begin{enumerate}[(i)]
\setcounter{enumi}{2}
\item If $X_1$ is $k$-connected, then $\norm{X_\bullet}$ is $(k+1)$-connected. 
\item If the squares
\[
\xymatrix{
X_2 \ar[r]^{d_1} \ar[d]^{d_2} & X_1 \ar[d]^{d_1} & \ar@{}[d]|{\mbox{and}} & X_2 \ar[r]^{d_1} \ar[d]^{d_0} & X_1 \ar[d]^{d_0}\\
X_1 \ar[r]^{d_1} & X_0 & & X_1 \ar[r]^{d_1} & X_0
}
\]
are homotopy cartesian and $X_1 \neq \emptyset$, then the tautological map 
\[
X_1 \to \tilde{\Omega} \norm{X_\bullet}
\]
(the target is the space of paths that begin and end in the contractible subspace $X_0 \subset \norm{X_\bullet}$) is a weak homotopy equivalence.
\end{enumerate}
\end{thm}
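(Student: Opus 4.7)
For (iii), the plan is to apply Lemma~\ref{lem:connecttivity-of-semisimplicial-amp} to the map $f_\bullet : X_\bullet \to \mathrm{const}_\bullet$ from $X_\bullet$ to the constant semi-simplicial space on a point. By hypothesis (ii), $X_p \simeq X_1^p$ is $k$-connected for $p \geq 1$, while $X_0 \simeq \ast$; hence $f_p$ is $((k+2)-p)$-connected for every $p$. The lemma then gives that $\norm{f_\bullet}$ is $(k+2)$-connected, and Corollary~\ref{cor:constant-simplicial-space} identifies $\norm{\mathrm{const}_\bullet} \simeq \ast$, so $\norm{X_\bullet}$ is $(k+1)$-connected.

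For (iv), I construct an auxiliary path-space semi-simplicial space $Z_\bullet$ with $Z_p := X_{p+1}$ and face maps $d_0, \ldots, d_p$ inherited from $X_{p+1}$, together with a semi-simplicial map $\eps_\bullet : Z_\bullet \to X_\bullet$ given by $\eps_p := d_{p+1}^X$; semi-simpliciality follows from the identity $d_i d_{p+1} = d_p d_i$ for $i \leq p$, and in particular $\eps_0 = d_1 : X_1 \to X_0$. The proof reduces to verifying (a) that $\eps_\bullet$ is homotopy cartesian and (b) that $\norm{Z_\bullet}$ is weakly contractible. Given these, Theorem~\ref{lem:simplicial-hocartesianness} applied to $\eps_\bullet$ yields a homotopy cartesian square whose vertical homotopy fibres identify $Z_0 = X_1$ with the homotopy fibre of $X_0 \simeq \ast \to \norm{X_\bullet}$, that is with $\tilde\Omega\norm{X_\bullet}$, and one checks that the induced map is the tautological one. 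For (a), Lemma~\ref{lem:minimal-checking-for-hocartesianness} in its dual form reduces matters to the squares at $(p, i) = (1, 0)$ and at $i = p$ for $p \geq 1$. The $(1, 1)$-square is precisely the first hypothesis square of (iv); the $(1, 0)$-square has canonical comparison $X_2 \to X_1 \times_{X_0}^h X_1$ which, since $X_0 \simeq \ast$, is weakly equivalent to $\kappa_2 : X_2 \to X_1 \times X_1$ and so a weak equivalence by (ii). For $p \geq 2$, under the Segal identifications $X_k \simeq X_1^k$ the $(p, p)$-square reduces, up to identity on the first $p-1$ coordinates, to the shear map $(u, v) \mapsto (m(u, v), u)$ on $X_1 \times X_1$, where $m$ is the multiplication induced by $d_1 : X_2 \to X_1$ via $\kappa_2$; combined with (ii) and $X_0 \simeq \ast$, the first hypothesis square of (iv) is equivalent to this shear being a weak equivalence.

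The principal obstacle is (b). My plan is to run the spectral sequence of Section~\ref{subsec:spectralsequence} on $\norm{Z_\bullet}$: the $E^1$-page is $E^1_{p, q} = H_p(X_{q+1})$, which by (ii) and the K\"unneth formula identifies with $H_*(X_1)^{\otimes (q+1)}$ in internal degree $p$, and a direct inspection recognises the $d^1$ differential as the bar differential of the two-sided bar complex $B_\bullet(R, H_*(X_1), H_*(X_1))$ for the Pontryagin algebra $H_*(X_1)$ (whose multiplication comes from $m$, with associativity up to homotopy supplied by the Segal identification at level three). Acyclicity of this bar resolution concentrates $E^2$ at the origin with value $R$, yielding $H_*(\norm{Z_\bullet}) \cong H_*(\ast)$. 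The delicate step is upgrading homological triviality to weak contractibility: the second hypothesis square of (iv), by a symmetric shear analysis, allows the bar identification and its acyclicity to persist with arbitrary local coefficient systems on $\norm{Z_\bullet}$, which forces $\pi_1(\norm{Z_\bullet}) = 0$ and, via Whitehead, $\norm{Z_\bullet} \simeq \ast$.
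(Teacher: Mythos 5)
Part (iii) and the reduction of part (iv) to the contractibility of the path space $PX_\bullet$ (your $Z_\bullet$) match the paper closely and are correct. One small inefficiency: you invoke the \emph{dual} form of Lemma~\ref{lem:minimal-checking-for-hocartesianness}, which forces you to analyse the $(p,p)$-squares for all $p\geq 2$ via a shear-map identification. The paper uses the primal form, so that apart from the $(1,1)$-square (which is exactly the first hypothesis square of (iv)) the remaining conditions are the $(p,0)$-squares, which under $\kappa_\bullet$ become bare projection squares $X_1^{p+1}\to X_1^p$ and are homotopy cartesian for free. Your route works, but it costs you the shear analysis which the paper avoids entirely.

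The genuine gap is in step (b), the weak contractibility of $\norm{PX_\bullet}$. Your plan identifies $E^1_{p,q}=H_p(X_{q+1})$ with the bar complex $B_\bullet(R,H_*(X_1),H_*(X_1))$. Already over $R=\bZ$ this requires $H_*(X_1^{q+1})\cong H_*(X_1)^{\otimes(q+1)}$, i.e.\ a K\"unneth isomorphism without Tor-terms, which is not available without a flatness hypothesis you don't have. The paper sidesteps this entirely: rather than identifying $(E^1,d^1)$ with any bar complex, it produces a chain contraction of $(E^1_{p,*},d^1)$ directly, using maps $g_{q+1}:X_q\to X_{q+1}$ built (in the homotopy category) from a homotopy identity element $e\in X_1$ and verifying the extra-degeneracy relations $(g_q)_*d^1+d^1(g_{q+1})_*=\id$ by hand. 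This works over $\bZ$ with no flatness.

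The more serious issue is the final sentence: ``the bar identification and its acyclicity persist with arbitrary local coefficient systems, which forces $\pi_1(\norm{Z_\bullet})=0$.'' This is not a proof. With a nonconstant local system $\cL$ on $\norm{PX_\bullet}$, the $E^1$-term is $H_p(PX_q;\cL_q)$ with monodromy-twisted differentials; there is no visible bar-complex structure, the K\"unneth issue becomes worse, and the chain-contraction trick does not obviously go through either (since $g_{q+1}$ is only a map in the homotopy category, its effect on twisted coefficients is unclear without knowing something about $\pi_1$ first, which is circular). The paper instead proves $\pi_1(\norm{PX_\bullet})=0$ by a separate and quite different argument: collapse the $0$-simplices of $PX_\bullet$ to form $PX'_\bullet$, compare the homotopy cofibre sequences of $PX_\bullet$ and $\pi_0 PX_\bullet$, identify $\pi_0 PX_\bullet$ with $E_\bullet\pi_0(X_1)$ for the \emph{group} $\pi_0(X_1)$ (this is where both hypothesis squares of (iv) are used, to get a two-sided identity and invertibility on $\pi_0$), and use a retraction argument together with the fact that $X_1\to\norm{PX_\bullet}$ is nullhomotopic. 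You should replace your sketch of step (b) with arguments of this kind, or cite the paper's.
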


If $X_\bullet$ is simplicial, and not just semi-simplicial, then the statement of this theorem and its proof can be simplified, which we shall explain in Remark \ref{rem:tothm:segal-deltaspace} below.

Under assumption (ii) we can form the morphism
$$\mu: X_1 \times X_1 \overset{d_0 \times d_2}{\underset{\simeq}{\longleftarrow}} X_2 \overset{d_1}\lra X_1$$
in the homotopy category, which makes $X_1$ into a non-unital homotopy associative $H$-space. Assumptions (i) and (ii) should be thought of as saying that $X_\bullet$ is a model for the nerve of this $H$-space. 

The assumption in (iv) can be expressed, by taking vertical homotopy fibres, as asking that for each $x \in X_1$ the maps $\mu(x, -), \mu(-, x) : X_1 \to X_1$ be weak homotopy equivalences. That is, it models the $H$-space $X_1$ being \emph{grouplike}. In particular, $\mu$ induces an associative product $- \cdot - : \pi_0(X_1) \times \pi_0(X_1) \to \pi_0(X_1)$ for which $[x] \cdot -, - \cdot [x] :\pi_0(X_1) \to \pi_0(X_1)$ are bijections for all $[x]$. As $X_1 \neq \emptyset$ we may choose an $[x] \in \pi_0(X_1)$, for which there is a unique $[e] \in \pi_0(X_1)$ such that $[x] \cdot [e] = [x]$. But then for any $y$ we have
$$[x] \cdot [y] = ([x] \cdot [e]) \cdot [y] = [x] \cdot ([e] \cdot [y]),$$
so $[e] \cdot [y] = [y]$ for any $[y]$ as $[x] \cdot -$ is injective. But then
$$([y] \cdot [e]) \cdot [y] = [y] \cdot ([e] \cdot [y]) = [y] \cdot [y]$$
and so $[y] \cdot [e] = [y]$ as $- \cdot [y]$ is injective. Hence $[e]$ is an two-sided identity element for $-\cdot-$, making $(\pi_0(X_1), \cdot, [e])$ an associative unital monoid. As each $[y] \cdot -$ is a bijection, it is easy to see that this is in fact a group. One consequence is that the map $\mu(e, -) : X_1 \to X_1$ satisfies $\mu(e, \mu(e, -)) \simeq \mu(\mu(e,e), -) \simeq \mu(e, -)$ so is homotopy-idempotent, but it is also a weak equivalence, so is weakly homotopic to the identity.

\begin{proof}
The first part is an immediate consequence of Lemma \ref{lem:connecttivity-of-semisimplicial-amp}: the map $X_p \to *$ is $(k+2-p)$-connected for each $p$, and hence $\norm{X_\bullet} \to \norm{*_\bullet}$ is $(k+2)$-connected. But the geometric realisation of the terminal semi-simplicial space is contractible, and so $\norm{X_\bullet}$ is $(k+1)$-connected.

For the second part, we use the \emph{semi-simplicial path space} $PX_\bullet$. This is the semi-simplicial space $PX_p := X_{p+1}$, with face maps $d_i: PX_p \to PX_{p-1}$ given by those of $X_\bullet$ having the same names. The maps $d_{p+1}: PX_p \to X_p$ define a simplicial map $PX_\bullet \to X_\bullet$ and we will prove that it is homotopy-cartesian. To verify this, we use Lemma \ref{lem:minimal-checking-for-hocartesianness}. The condition for $(p,i)=(1,1)$ holds by hypothesis, so it remains to prove that the diagrams
\[
\xymatrix{
X_{p+1} \ar[r]^{d_0} \ar[d]^{d_{p+1}} & X_p \ar[d]^{d_p}\\
X_p \ar[r]^{d_0} & X_{p-1}
}
\]
are homotopy cartesian. Under the weak equivalences $\kappa_i$ (for $p-1 \leq i \leq p+1$), this diagram becomes 
\[
\xymatrix{
X_1^{p+1} \ar[rr]^{pr_{\{2, \ldots, p+1 \}}} \ar[d]^{pr_{\{1, \ldots, p \}}} & & X_1^{p} \ar[d]^{pr_{\{1, \ldots, p-1 \}}}\\
X_1^{p} \ar[rr]^{pr_{\{2, \ldots, p \}}} & & X_1^{p-1},
}
\] 
which is obviously homotopy cartesian. Therefore
\[
\xymatrix{
X_1 \ar[r] \ar[d]^{d_1} & \norm{PX_\bullet} \ar[d] \\
X_0 \ar[r] & \norm{X_\bullet}
}
\]
is homotopy cartesian, by Theorem \ref{lem:simplicial-hocartesianness}. 

We will now show that $\norm{PX_\bullet}$ is weakly contractible. Using the simplicial identities, one quickly checks that the maps $\eps_p = d_0^{p+1} : PX_p =X_{p+1} \to X_0$ form an augmentation $PX_\bullet \to PX_{-1} := X_0$. We shall show this is a weak equivalence by showing that $H_*(PX_{-1}, \norm{PX_\bullet};\bZ)=0$ and then showing that $\norm{PX_\bullet}$ is simply-connected: the claim then follows from Whitehead's Theorem.

To see that the homology of the pair $(PX_{-1}, \norm{PX_\bullet})$ vanishes, consider the morphism
$$g_{q+1} : X_q \overset{\kappa_q}\lra X_1^q \overset{e \times \mathrm{Id}}\lra X_1^{q+1} \overset{\kappa_{q+1}}{\underset{\simeq}{\longleftarrow}} X_{q+1}$$
in the homotopy category, where $e \in X_1$ represents the identity element of $\pi_0(X_1)$ as discussed above. This satisfies the identities of Lemma \ref{lem:ExtraDeg} \emph{up to weak homotopy}. Thus in the spectral sequence
$$E^1_{p,q} = H_p(PX_q;\bZ) \Longrightarrow H_{p+q}(PX_{-1}, \norm{PX_\bullet};\bZ)$$
the maps $g_{q+1}$ give a chain contraction of $(E^1_{p, *}, d^1)$, as we have
\begin{align*}
(g_q)_* d^1 + d^1 (g_{q+1})_* &= \left(\sum_{i=0}^q (-1)^i (g_q)_*(d_i)_*\right) + \left(\sum_{j=0}^{q+1} (-1)^j (d_j)_* (g_{q+1})_*\right)\\
 &= (d_0)_* (g_{q+1})_* + \sum_{i=0}^q (-1)^i \left((g_q)_*(d_i)_* - (d_{i+1})_* (g_{q+1})_*\right)\\
 &= \mathrm{Id}
\end{align*}
Thus $E^2_{*,*}=0$ and hence $H_{*}(PX_{-1}, \norm{PX_\bullet};\bZ)=0$ as claimed.

To show that $\norm{PX_\bullet}$ is simply-connected, let $PX'_\bullet$ be obtained by collapsing down the $0$-simplices of $PX_\bullet$ to a point. Consider the map of homotopy cofibre sequences
\begin{equation*}
\xymatrix{
X_1 \ar[d] \ar[r]& \norm{PX_\bullet} \ar[d] \ar[r]& \norm{PX_\bullet'} \ar[d]\\
\pi_0X_1 \ar[r] & \norm{\pi_0PX_\bullet} \ar[r]& \norm{\pi_0 PX_\bullet'}
}
\end{equation*}
The map $X_1 \to \pi_0X_1$ is 1-connected. The map $PX'_p \to \pi_0(PX'_p)$ is $(2-p)$-connected, so $\norm{PX'_\bullet} \to \norm{\pi_0 PX'_\bullet}$ is 2-connected by Lemma \ref{lem:connecttivity-of-semisimplicial-amp}. The semi-simplicial set $\pi_0PX_\bullet$ is in bijection with $\pi_0(X_1)^{p+1}$ in degree $p$, and can be identified with $E_\bullet \pi_0(X_1)$ for the group $\pi_0(X_1)$, so $\norm{\pi_0PX_\bullet} \simeq *$. Now the map $X_1 \to \norm{PX_\bullet}$ is nullhomotopic (it is homotopic to $x \mapsto \mu(x,e) \in X_1 \subset \norm{PX_\bullet}$ which in turn is homotopic to $x \mapsto \pi_2(x,e) = e \in X_1 \subset \norm{PX_\bullet}$), so the middle map is a retract of the right-hand map, so is also an isomorphism on fundamental groups. Thus $\norm{PX_\bullet}$ is simply-connected.
\end{proof}

\begin{remark}\label{rem:tothm:segal-deltaspace}
If $X_\bullet$ is a simplicial space satisfying (i) and (ii) of Theorem \ref{thm:SegalDeltaSpace}, then instead of the hypothesis of (iv) it is enough to just ask for the square
\[
\xymatrix{
X_2 \ar[r]^{d_1} \ar[d]^{d_2} & X_1 \ar[d]^{d_1}\\
X_1 \ar[r]^{d_1} & X_0
}
\]
to be homotopy cartesian, for the same conclusion to hold. This is because the maps $h_{p+1}=s_{p+1}: PX_{p}=X_{p+1} \to PX_{p+1}=X_{p+2}$ form a system of extra degeneracies, so Lemma \ref{lem:ExtraDeg} shows that the augmentation map $\norm{PX_\bullet} \to X_0$ is a weak homotopy equivalence, and we have assumed that $X_0 \simeq *$. 
\end{remark}

\section{(Non-unital) topological categories}\label{sec:TopCat}

\begin{defn}
A \emph{non-unital topological category} $\cC$ consists of an object space $\cC_0=\Ob (\cC)$, a morphism space $\cC_1 =\Mor(\cC)$ and three maps
\[
s,t: \cC_1 \lra \cC_0 \quad \text{ and } \quad m: \cC_1 \times_{\cC_0} \cC_1 := \{ (f,g) \in \cC_1 \times \cC_1 \,\vert\, t(f)=s(g)\ \lra \cC_1,
\]
such that
\[
m (m(f,g),h) = m(f,m(g,h));\quad t (m(f,g))= t (g); \quad s (m(f,g)) = s (f)
\]
for all $f,g,h$ for which these expressions are defined.
\end{defn}

One thinks of $s$ as the map associating to a morphism its source, $t$ as the map associating to a morphism its target, and $m$ as the composition of morphisms, whence we write $g \circ f := m(f,g)$. We write $\cC(b_0,b_1) := (s,t)^{-1} (b_0,b_1)$ for the space of morphisms from $b_0$ to $b_1$.
A functor $F: \cC \to \cD$ between non-unital topological categories is a pair of continuous maps $F_i: \cC_i \to \cD_i$, $i=0,1$ such that $s F_1 = F_0 s$, $t F_1 = F_0 t$, and $m\circ (F_1 \times F_1) = F_1 \circ m$. The set $\Fun(\cC, \cD)$ of functors is endowed with a topology as a subspace of $\mathrm{map}(\cC_0, \cD_0) \times \mathrm{map}(\cC_1, \cD_1)$.

\begin{defn}
A \emph{unital topological category} is a non-unital topological category $\cC$ together with a map $u: \Ob (\cC) \to \Mor (C)$ such that $t \circ u = s \circ u = \id$ and $m (f,u(t(f)))=f$ and $m (u(s (f)), f)=f$ for all $f \in \Mor (C)$. 
\end{defn}

We shall say, slightly informally, that $\cC$ has units if there is the structure of a unital topological category on it.

\begin{defn}
Let $\cC$ be a non-unital topological category. The \emph{(semi-simplicial) nerve} $N_\bullet \cC=\cC_{\bullet}$ of $\cC$ is the semi-simplicial space whose space of $p$-simplices is the space $\Fun ([p],\cC)$. For a morphism $\alpha: [q] \to [p]$, the map $\alpha^*: N_p \cC \to N_q \cC$ is given by precomposition with $\alpha$. 

The \emph{classifying space} $B\cC$ of $\cC$ is by definition the geometric realisation of its nerve, $B \cC := \norm{\cC_{\bullet}}$. A functor $F : \cC \to \cD$ induces a semi-simplicial map $F_\bullet: \cC_\bullet \to \cD_\bullet$ of semi-simplicial spaces and hence a map $BF : B\cC \to B\cD$ of classifying spaces.
\end{defn}

More explicitly, $N_0 \cC= \cC_0$, $N_1 \cC = \cC_1$, $d_1 =s, \, d_0 = t : \cC_1 \to \cC_0$. For higher values of $p$, $N_p \cC$ is the space $\cC_p := \cC_1 \times_{\cC_0} \cC_1 \times_{\cC_0} \cdots \times_{\cC_0} \cC_1$ ($p$ factors) with face maps given by composition, and even more explicitly, the points of $N_p \cC$ are the sequences 
\[
 c_0 \stackrel{f_1}{\lra} c_1 \lra \cdots \lra c_{p-1} \stackrel{f_p}{\lra} c_p
\]
of composable morphisms in $\cC$, and the face maps are given by
\[
 d_i (f_1, \ldots,f_p):=
 \begin{cases}
  (f_2, \ldots, f_p) & i=0\\
  (f_1, \ldots, f_{i+1} \circ f_{i}, f_p) & 0 < i <p\\
  (f_1, \ldots, f_{p-1}) & i=p.
 \end{cases}
\]
From this point of view, the data of a non-unital topological category is captured precisely by spaces of 0-, 1-, and 2-simplices of $\cC_\bullet$ and the face maps between them: the source and target maps are given by $d_1: \cC_1 \to \cC_0$ and $d_0: \cC_1 \to \cC_0$ respectively, and composition of morphisms is given by $d_1 : \cC_2 \to \cC_1$. For this reason we shall freely confuse the target and source maps with $d_0, d_1 : \cC_1 \to \cC_0$.

\begin{lem}\label{lem:NatTrans}
If $\eta : F \Rightarrow G : \cC \to \cD$ is a natural transformation, then there is an induced homotopy $BF \simeq BG : B\cC \to B\cD$ of maps on classifying spaces.
\end{lem}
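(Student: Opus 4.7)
The plan is to produce a semi-simplicial homotopy between the semi-simplicial maps $F_\bullet, G_\bullet : \cC_\bullet \to \cD_\bullet$ in the sense of Lemma~\ref{lem:semisimplicialhomotopy}, and then apply that lemma to obtain the required homotopy of classifying spaces. A natural transformation $\eta : F \Rightarrow G$ here means a continuous map $\eta : \cC_0 \to \cD_1$ with $s \circ \eta = F_0$, $t \circ \eta = G_0$, and $G(f) \circ \eta_c = \eta_{c'} \circ F(f)$ for every morphism $f : c \to c'$ of $\cC$.

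For each $p \geq 0$ and each $i \in \{0, 1, \ldots, p\}$ I will define
\[
h_{p+1,i} : N_p \cC \lra N_{p+1} \cD
\]
by sending a composable chain $c_0 \xrightarrow{f_1} c_1 \to \cdots \xrightarrow{f_p} c_p$ to the composable chain
\[
F(c_0) \xrightarrow{F(f_1)} \cdots \xrightarrow{F(f_i)} F(c_i) \xrightarrow{\eta_{c_i}} G(c_i) \xrightarrow{G(f_{i+1})} \cdots \xrightarrow{G(f_p)} G(c_p)
\]
of $p+1$ morphisms in $\cD$. Composability is immediate from the source and target of $\eta_{c_i}$, and continuity follows from continuity of $F_1$, $G_1$, $\eta$, and the composition map of $\cD$. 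Geometrically, $h_{p+1,i}$ labels the $i$-th top-dimensional simplex in the standard triangulation of the prism $\Delta^1 \times \Delta^p$ employed in the proof of Lemma~\ref{lem:semisimplicialhomotopy}.

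The boundary identities will evaluate to $d_0 h_{p+1,0} = G_p$ and $d_{p+1} h_{p+1,p} = F_p$, so applying Lemma~\ref{lem:semisimplicialhomotopy} will deliver a homotopy between $BG$ and $BF$ (equivalently between $BF$ and $BG$). The identities relating $d_i h_{p+1,j}$ to $h_{p,j'} d_{i'}$ for $i \notin \{j, j+1\}$ hold by direct inspection from functoriality of $F$ and $G$, since in those cases the face map combines either two consecutive $F$-morphisms or two consecutive $G$-morphisms, and this commutes with applying $F$ or $G$. The essential non-formal identity is the shared interior face equation $d_i h_{p+1,i} = d_i h_{p+1,i-1}$ for $0 < i \leq p$: both sides produce identical chains of length $p$ except at the middle morphism, which reads $\eta_{c_i} \circ F(f_i)$ on one side and $G(f_i) \circ \eta_{c_{i-1}}$ on the other, and these agree by the naturality square for $\eta$ applied to $f_i$. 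The main obstacle is just the bookkeeping required to align the prism combinatorics with the face-map identities; the only genuinely non-trivial geometric input is the naturality of $\eta$, and it enters at exactly the shared interior faces of the decomposition.
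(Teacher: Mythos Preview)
Your proposal is correct and follows exactly the paper's approach: the paper defines the same maps $h_{p+1,i}(f_1,\ldots,f_p) = (F(f_1),\ldots,F(f_i),\eta_{c_i},G(f_{i+1}),\ldots,G(f_p))$ and invokes Lemma~\ref{lem:semisimplicialhomotopy}, simply asserting that the hypotheses are immediately verified. You have supplied the verification the paper omits, including the observation that the homotopy runs from $BG$ to $BF$ rather than the other way round.
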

\begin{proof}
We apply Lemma \ref{lem:semisimplicialhomotopy} with
$$h_{p+1, i}(f_1, f_2, \ldots, f_p) = (F(f_1), F(f_2), \ldots, F(f_i), \eta_{c_i}, G(f_{i+1}), \ldots,  G(f_p))$$
where the hypotheses are immediately verified.
\end{proof}

\subsection{Fibrancy conditions}

We shall only be able to make homotopical statements about the classifying spaces of (non-unital) topological categories when some of the structure maps involved are fibrations (see Remark \ref{rem:WhatIsAFibration} for a discussion of what ``fibration" can be taken to mean).

\begin{defn}
A non-unital topological category $\cC$ is called \emph{left fibrant} if the source map $d_1: \cC_1 \to \cC_0$ is a fibration. It is called \emph{right fibrant} if the target map $d_0: \cC_1 \to \cC_0$ is a fibration. 

Moreover, $\cC$ is called \emph{fibrant} if $(d_0,d_1): \cC_1 \to \cC_0 \times \cC_0$ is a fibration. 
\end{defn}

If $\cC$ is fibrant then it is both left and right fibrant, but the converse need not hold: consider the topological category with objects and morphisms a space $X$, and all structure maps the identity; this is always left and right fibrant, but is fibrant only if there are no non-constant paths in $X$.

\begin{lem}\label{inheritance-of-fibrances}
If $\cC$ is left fibrant, then $d_p: \cC_p \to \cC_{p-1}$ is a fibration. If $\cC$ is right fibrant, then $d_0: \cC_p \to \cC_{p-1}$ is a fibration. 
\end{lem}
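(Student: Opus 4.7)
The strategy is to exhibit each of the two face maps in question as a pullback of one of the basic face maps $d_0,d_1:\cC_1\to\cC_0$, and then invoke the fact that fibrations are stable under pullback in compactly generated spaces. Since this stability holds for both Hurewicz and Serre fibrations, the argument will be insensitive to the precise meaning of ``fibration'' discussed in Remark \ref{rem:WhatIsAFibration}.

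For the first statement, I would decompose
$$\cC_p \;\cong\; \cC_{p-1}\times_{\cC_0}\cC_1$$
by grouping the first $p-1$ morphisms into a single element of $\cC_{p-1}$. Here the map $\cC_{p-1}\to\cC_0$ sends $(f_1,\ldots,f_{p-1})$ to $t(f_{p-1})$, and the map $\cC_1\to\cC_0$ is $s=d_1$. Under this identification, the face map $d_p:\cC_p\to\cC_{p-1}$, which forgets the last morphism, is precisely the projection onto the first factor; consequently it is the pullback of $d_1:\cC_1\to\cC_0$. Left fibrancy of $\cC$ says exactly that $d_1$ is a fibration, so $d_p$ is a fibration as well.

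Dually, I would write
$$\cC_p \;\cong\; \cC_1\times_{\cC_0}\cC_{p-1}$$
by grouping the last $p-1$ morphisms. Here $\cC_1\to\cC_0$ is $t=d_0$ and $\cC_{p-1}\to\cC_0$ sends $(f_2,\ldots,f_p)$ to $s(f_2)$. Under this description, $d_0:\cC_p\to\cC_{p-1}$ is the projection to the second factor, hence the pullback of $d_0:\cC_1\to\cC_0$. Right fibrancy then gives that $d_0:\cC_p\to\cC_{p-1}$ is a fibration.

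No induction on $p$ is required: the two decompositions above handle $d_p$ and $d_0$ in one stroke. I do not foresee any real obstacle; the only book-keeping point is that all fibre products are to be formed in the category of compactly generated spaces, in which fibrations pull back to fibrations (and in which the iterated fibre product defining $\cC_p$ genuinely agrees with the two 2-fold fibre products above).
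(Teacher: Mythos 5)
Your proposal is correct and matches the paper's argument: the paper also exhibits $d_p$ (resp.\ $d_0$) as the pullback of $d_1 : \cC_1 \to \cC_0$ (resp.\ $d_0 : \cC_1 \to \cC_0$) along the map $\cC_{p-1} \to \cC_0$ that remembers the appropriate end vertex, via the cartesian squares with horizontal maps $d_0^{p-1} : \cC_p \to \cC_1$ and $d_2\cdots d_p : \cC_p \to \cC_1$. The only cosmetic difference is that the paper labels the horizontal maps as composites of face operators rather than describing them in terms of the fibre-product decomposition of $\cC_p$.
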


\begin{proof}
The follows because the squares
\[
\xymatrix{
\cC_{p} \ar[d]^{d_p} \ar[r]^{d_0 \cdots d_0} & \cC_1 \ar[d]^{d_1} & \cC_{p} \ar[d]^{d_0} \ar[r]^{d_2 \cdots d_p} & \cC_1 \ar[d]^{d_0}\\
\cC_{p-1} \ar[r]^{d_0 \cdots d_0} & \cC_0 & \cC_{p-1} \ar[r]^{d_1 \cdots d_{p-1}} & \cC_0
}
\]
are cartesian.
\end{proof}

\subsection{The unitalisation}

If $\cC$ has units, then the semi-simplicial space $N_\bullet \cC$ has the structure of a simplicial space \cite{Segal-classifying}. Just as we can freely add degeneracies to a semi-simplicial object to form a simplicial one, we can freely add units to a non-unital topological category to form a unital one.

\begin{defn}
The \emph{unitalisation} of a non-unital topological category $\cC$ is the topological category $\cC^+$ with object space $\Ob (\cC^+)= \Ob (\cC)$ and morphism space $\Mor (\cC^+)= \Mor (\cC) \sqcup \Ob (\cC)$. The source and target maps are extended by the identity on $\Ob (\cC)$. The composition map $m^+ $ for $\cC^+$ is defined so that $c \in \Ob (\cC) \subset \Mor (\cC^+)$ behaves as the identity morphism at $c$.
\end{defn}

The category $\cC^+$ is never fibrant unless the object space $\Ob (\cC)$ has no non-constant paths. However, $\cC^+$ is left (or right) fibrant if $\cC$ is left (or right) fibrant. This limits the use of the unitalisation. But unitalisation has one very pleasant property, which we learnt from M.\ Krannich \cite[Lemma 1.3.11]{Krannich}.

\begin{proposition}\label{prop:krannich-lemma}
Let $\cC$ be a non-unital topological category. Then the natural map $B \cC \to B  \cC^+$ is a weak homotopy equivalence.
\end{proposition}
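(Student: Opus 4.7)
The plan is to identify the underlying semi-simplicial space of $N_\bullet \cC^+$ with $E(N_\bullet \cC)$, the result of freely adjoining degeneracies to the semi-simplicial nerve of $\cC$, in such a way that the natural inclusion $N_\bullet \cC \hookrightarrow N_\bullet \cC^+$ corresponds to the unit $u: N_\bullet \cC \to F E(N_\bullet \cC)$ of the adjunction. Once this is done, the proposition follows immediately from Lemma \ref{lem:AdjUnitsTop}, applied to $X_\bullet = N_\bullet \cC$.

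To produce the identification, I unpack what a $p$-simplex of $N_\bullet \cC^+$ is: a composable chain $c_0 \xrightarrow{f_1} c_1 \to \cdots \xrightarrow{f_p} c_p$ with each $f_i \in \Mor(\cC) \sqcup \Ob(\cC)$. Writing $S \subseteq \{1, \ldots, p\}$ for the set of indices where $f_i \in \Mor(\cC)$ and setting $q = |S|$, there is a unique monotone surjection $\alpha \colon [p] \twoheadrightarrow [q]$ that jumps precisely at the indices of $S$. Extracting the $f_i$ with $i \in S$ gives a $q$-simplex of $N_\bullet \cC$, and the remaining $f_i$'s are forced to be the (formal) identities at the adjacent objects. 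This yields a bijection
\[
N_p \cC^+ \;\xrightarrow{\;\cong\;}\; \coprod_{\alpha \colon [p] \twoheadrightarrow [q]} N_q \cC \;=\; E(N_\bullet \cC)_p.
\]
The bijection is in fact a homeomorphism: $(\Mor(\cC) \sqcup \Ob(\cC))^p$ decomposes as a disjoint union of products indexed by the choice of ``type'' (morphism or object) at each position, $N_p\cC^+$ is obtained by imposing the closed composability conditions on each piece separately, and each resulting piece is homeomorphic to the corresponding $N_q\cC$.

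Next I verify naturality in face maps. The face operator $d_i$ on $N_\bullet \cC^+$ either drops the outermost morphism ($i = 0$ or $i = p$) or composes two adjacent ones ($0 < i < p$), and composing with a formal identity leaves the other morphism unchanged. Under the identification above this matches the face operator of $E(N_\bullet \cC)$ exactly: given $(\alpha, \sigma)$ with $\alpha \colon [p] \twoheadrightarrow [q]$ and $\sigma \in N_q\cC$, one factors $\alpha \circ \delta^i \colon [p-1] \to [q]$ as $\alpha' \colon [p-1] \twoheadrightarrow [q']$ followed by $\beta' \colon [q'] \rightarrowtail [q]$, and the face is $(\alpha', (\beta')^*\sigma)$. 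The three cases (collapsing an identity edge versus composing with an identity versus composing two genuine morphisms) correspond precisely to whether $\alpha$ already repeats $\alpha(i)$ and whether $i$ is interior or boundary.

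Under this identification, the inclusion of semi-simplicial spaces $N_\bullet\cC \hookrightarrow N_\bullet\cC^+$ induced by the functor $\cC \to \cC^+$ lands in the summand indexed by the identity surjection $\id \colon [p] \to [p]$ and is precisely the unit $u$. Applying Lemma \ref{lem:AdjUnitsTop} gives that
\[
B\cC = \norm{N_\bullet \cC} \xrightarrow{\;\norm{u}\;} \norm{E(N_\bullet \cC)} \cong \norm{N_\bullet \cC^+} = B\cC^+
\]
is a weak homotopy equivalence, as required. The only non-formal input is Lemma \ref{lem:AdjUnitsTop}; the rest is bookkeeping, and the sole place one must be a little careful is in checking that the topology on $N_p\cC^+$ really is the disjoint union topology, which is true because $\Mor(\cC^+) = \Mor(\cC) \sqcup \Ob(\cC)$ as topological spaces.
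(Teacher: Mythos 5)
Your proof is correct and takes essentially the same approach as the paper: both identify $N_\bullet \cC^+ \cong E(N_\bullet\cC)$ with the inclusion corresponding to the unit of the adjunction, and then invoke Lemma \ref{lem:AdjUnitsTop}. The paper states this identification in a single sentence; you have supplied the explicit verification, but the underlying argument is identical.
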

\begin{proof}
There is an isomorphism $N_\bullet \cC^+ \cong E(N_{\bullet} \cC)$ of simplicial spaces, such that the inclusion $N_\bullet \cC \to N_\bullet \cC^+$ corresponds to the unit map $N_\bullet \cC \to E N_\bullet \cC$. Apply Lemma \ref{lem:AdjUnitsTop}.
\end{proof}

\subsection{Soft units}\label{subsec:softunits}

From the point of view of the homotopy theory of classifying spaces of (unital, discrete) categories, such as Quillen's Theorems A and B, an important role is played by over-categories $\cC/c$ (and dually under-categories $c \backslash \cC$). 

Recall that for an object $c \in \Ob(\cC)$, the \emph{over-category} $\cC / c$ has objects the arrows $f:b \to c$, and morphisms $(g:a \to c) \to (f:b \to c)$ given by a morphism $h:a \to b$ such that $f \circ h = g$. This definition can be made equally well for non-unital topological categories, by topologising both objects and morphisms as subspaces of $\cC_1$. Dually (by reversing arrows), one defines the under-category $c \backslash \cC$.

If $\cC$ is a \emph{unital} topological category then $\cC/c$ has an object $\id_c : c \to c$ which is terminal: there is a natural transformation from $\id_{\cC/c}$ to the constant functor to $\id_c$. By Lemma \ref{lem:NatTrans} this gives a contraction of $B(\cC/c)$. Similarly, $B (c \backslash \cC)$ is contractible if $\cC$ is unital.

If $\cC$ is a \emph{non-unital} topological category then $B(\cC/c)$ need not be contractible: for example, it can be empty. Instead, we axiomatise this property as follows.

\begin{defn}\label{defn:soft-units}
A non-unital topological category $\cC$ has \emph{soft left units} if for each $c \in \cC_0$ we have $B(\cC/c) \simeq *$. It has \emph{soft right units} if for each $c \in \cC_0$ we have $B(c \backslash \cC) \simeq *$.
\end{defn}

We will describe a convenient property, more general than having units, which implies that a non-unital topological category has soft left or right units. This property arises naturally for non-unital topological categories such as cobordism categories.

\begin{lem}\label{lem:overcategories-contractible}
Let $\cC$ be a non-unital topological category and let $f \in \cC(c,c')$ be a morphism in $\cC$. Then the induced functor $f_*: \cC/c \to \cC/c'$ given by postcomposition with $f$ induces a nullhomotopic map on classifying spaces.

Therefore if an object $c \in \cC_0$ is either the source or target of a morphism $f$ which induces a weak equivalence on over-categories, it follows that $B(\cC/c) \simeq *$. The analogous statement holds for under-categories.
\end{lem}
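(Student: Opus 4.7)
The plan is to exhibit a \emph{semi-simplicial nullhomotopy} (Lemma \ref{lem:semisimplicialcontraction}) from $N_\bullet(f_*) : N_\bullet(\cC/c) \to N_\bullet(\cC/c')$ to the constant $0$-simplex at $f : c \to c'$, viewed as an object of $\cC/c'$. Writing a $p$-simplex of $N_\bullet(\cC/c)$ as a tuple $\sigma = (g_0, \ldots, g_p;\ k_1, \ldots, k_p)$ with $g_i : a_i \to c$ and $k_i : a_{i-1} \to a_i$ in $\cC$ subject to $g_i \circ k_i = g_{i-1}$, I would set
\[
h_{p+1}(\sigma) := (f \circ g_0,\ \ldots,\ f \circ g_p,\ f;\ k_1,\ \ldots,\ k_p,\ g_p).
\]
This is a well-defined $(p+1)$-simplex of $N_\bullet(\cC/c')$: the new last object $f : c \to c'$ lies in $\cC/c'$, and the appended morphism $g_p$ is a legitimate arrow $(f \circ g_p) \to f$ there, since $f \circ g_p = f \circ g_p$. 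Continuity of $h_{p+1}$ is clear from the formula.

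Next I would verify the three face identities in Lemma \ref{lem:semisimplicialcontraction}. The equation $d_{p+1} h_{p+1} = N_p(f_*)$ is immediate on stripping off the rightmost appended data, and $d_0 h_1$ is clearly constant at the vertex $f$. The compatibility $d_i h_{p+1} = h_p d_i$ for $0 \le i \le p$ is combinatorial bookkeeping, whose only nontrivial case is $i = p$: there the ``seam'' identity reduces precisely to the defining relation $g_p \circ k_p = g_{p-1}$ of a morphism in $\cC/c$, which is exactly the point of appending $g_p$ at the end. Lemma \ref{lem:semisimplicialcontraction} will then produce the desired homotopy from $Bf_*$ to the constant map at $f \in B(\cC/c')$, establishing the first assertion. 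This face-identity check is the only substantive step of the argument; everything else is formal.

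For the second assertion, if $c$ is the source or target of a morphism $f$ whose induced functor on over-categories is a weak equivalence, then $Bf_*$ is simultaneously a weak homotopy equivalence and (by the first part) nullhomotopic, so on each homotopy group with each basepoint it is both an isomorphism and the zero map, forcing $B(\cC/c) \simeq *$. The under-category case is handled dually: for $\sigma$ a $p$-simplex of $N_\bullet(c' \backslash \cC)$ I would instead \emph{prepend}, setting $h_{p+1}(\sigma) := (f,\ g_0 \circ f,\ \ldots,\ g_p \circ f;\ g_0,\ k_1,\ \ldots,\ k_p)$, and invoke the evident dual form of Lemma \ref{lem:semisimplicialcontraction} (valid by the semi-simplicial symmetry reversing the order of vertices).
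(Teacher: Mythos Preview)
Your proof is correct and takes essentially the same approach as the paper: both construct the semi-simplicial nullhomotopy by appending the morphism $g_p : a_p \to c$ (equivalently, extending the chain $a_0 \to \cdots \to a_p \to c$ by $c \xrightarrow{f} c'$) and then invoke Lemma~\ref{lem:semisimplicialcontraction}. You simply spell out the face-identity verification in more detail than the paper does.
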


\begin{proof}
We consider the case of the over-categories. There are maps $h_p: N_p (\cC/c) \to N_{p+1}(\cC/c')$, given by sending a $p$-tuple of composable morphisms $c_0 \to c_1 \to \cdots \to c_p \to c$ in $\cC/c$ to the $(p+1)$-tuple of composable morphisms $c_0 \to c_1 \to \cdots \to c_p \to c \overset{f}\to c'$ in $\cC/c'$. These form a semi-simplicial nullhomotopy from $N_\bullet f_*$ to the constant map to $(c \stackrel{f}{\to} c')$. Then apply Lemma \ref{lem:semisimplicialcontraction}.
\end{proof}

This observation may be applied to many non-unital topological categories arising in practice, because while they do not have units they do have many morphisms composition with which which induce weak equivalences on morphism spaces, as follows.

\begin{defn}
Let $\cC$ be a topological category. We say that $\cC$ has \emph{weak left units} if for each object $b \in \cC_0$, there is a morphism $u: b\to b'$ in $\cC$ so that the map
\[
\cC (-,b) := d_0^{-1} (b)\stackrel{u \circ -}{\lra}\cC (-,b')
\]
is a weak homotopy equivalence. Dually, $\cC$ has \emph{weak right units} if for each object $b \in \cC_0$, there is a morphism $u:b'\to b$ in $\cC$ such that
\[
\cC (b,-):=d_1^{-1}(b) \stackrel{- \circ u}{\lra} \cC (b',-)
\]
is a weak homotopy equivalence. 
\end{defn}

\begin{remark}\label{rem:weak-units-fibrant}
If $\cC$ is left fibrant, then $u \circ - : \cC(-,b) \to \cC(-,b')$ is a weak equivalence if and only if $u \circ - : \cC(a,b) \to \cC(a,b')$ is a weak equivalence for each $a \in \cC_0$.
\end{remark}

\begin{lem}\label{overcategories-nonunital}
If $\cC$ has weak left units and is right fibrant, then it has soft left units. 
Dually, if $\cC$ has weak right units and is left fibrant, then it has soft right units. 
\end{lem}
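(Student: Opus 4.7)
The strategy is to prove the first assertion (weak left units $+$ right fibrant $\Rightarrow$ soft left units); the second will follow by a symmetric argument using under-categories. Fix $c \in \cC_0$ and use the weak left units hypothesis to produce a morphism $u : c \to c'$ such that $u \circ - : \cC(-, c) \to \cC(-, c')$ is a weak homotopy equivalence. The plan is to show that the induced map $Bu_* : B(\cC/c) \to B(\cC/c')$ is a weak homotopy equivalence; combined with Lemma \ref{lem:overcategories-contractible}, which already gives that $Bu_*$ is nullhomotopic, this will force $B(\cC/c) \simeq *$, since a nullhomotopic weak equivalence between nonempty spaces has weakly contractible source and target. (Nonemptiness of $B(\cC/c)$ is automatic: the weak equivalence $u \circ -$ lands in the nonempty space $\cC(-, c') \ni u$, so $\cC(-, c) = N_0(\cC/c) \subseteq B(\cC/c)$ is also nonempty.)

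To show $Bu_*$ is a weak equivalence, by Theorem \ref{thm:levelwiseequivalence} it suffices to prove this levelwise, i.e., that $N_p(u_*) : N_p(\cC/c) \to N_p(\cC/c')$ is a weak equivalence for every $p \geq 0$. Unwinding the definition of the over-category, a $p$-simplex of $N(\cC/c)$ is a chain of $p$ composable morphisms in $\cC/c$; writing its objects as morphisms $f_i : b_i \to c$ and its morphisms as arrows $h_i : b_{i-1} \to b_i$ satisfying $f_i h_i = f_{i-1}$, this data is equivalent to the data of a chain
\[
b_0 \xrightarrow{h_1} b_1 \xrightarrow{h_2} \cdots \xrightarrow{h_p} b_p \xrightarrow{f_p} c
\]
of $p+1$ composable morphisms in $\cC$ (the $f_0,\ldots,f_{p-1}$ are recovered by iterated composition with $f_p$). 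Hence there is a natural homeomorphism
\[
N_p(\cC/c) \;\cong\; \cC_p \times_{\cC_0} \cC(-, c),
\]
where the fibre product is formed along the ``last vertex'' map $\tau_p := d_0^p : \cC_p \to \cC_0$ and the source map $d_1 : \cC(-, c) \to \cC_0$, and under this identification $N_p(u_*)$ becomes $\mathrm{id}_{\cC_p} \times_{\cC_0} (u \circ -)$.

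The final step invokes right fibrancy: by Lemma \ref{inheritance-of-fibrances}, each $d_0 : \cC_k \to \cC_{k-1}$ is a fibration, so $\tau_p$ is a composition of fibrations and hence itself a fibration. The map $u \circ -$ is a weak equivalence over $\cC_0$, and pulling a weak equivalence of spaces over $\cC_0$ back along the fibration $\tau_p$ preserves weak equivalence: applying the long exact sequences of homotopy groups of the two induced fibrations $\cC_p \times_{\cC_0} \cC(-, c) \to \cC(-, c)$ and $\cC_p \times_{\cC_0} \cC(-, c') \to \cC(-, c')$ (whose fibres coincide, as $u \circ -$ is over $\cC_0$) together with the 5-lemma yields the claim. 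This gives the desired levelwise weak equivalence. The main obstacle, and the crucial use of the fibrancy hypothesis, is precisely this last point: without right fibrancy, the strict pullback defining $N_p(\cC/c)$ need not compute the homotopy pullback, and the level-$0$ weak equivalence $u \circ -$ could fail to propagate to higher levels. For the dual statement, the analogous identification is $N_p(c \backslash \cC) \cong \cC(c, -) \times_{\cC_0} \cC_p$, with the fibre product formed along the ``first vertex'' map $\cC_p \to \cC_0$, which is a fibration under the left-fibrancy hypothesis by the dual part of Lemma \ref{inheritance-of-fibrances}.
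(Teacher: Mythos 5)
Your proof is correct and takes essentially the same approach as the paper: both identify $N_p(\cC/c)$ as the strict pullback $\cC_p \times_{\cC_0} \cC(-,c)$ along the last-vertex map, use right fibrancy (via Lemma \ref{inheritance-of-fibrances}) to see that this pullback is homotopy-invariant, conclude that $u_*$ is a levelwise weak equivalence, and then combine Theorem \ref{thm:levelwiseequivalence} with the nullhomotopy from Lemma \ref{lem:overcategories-contractible}. The paper phrases the key step via cartesian squares of fibrations being homotopy cartesian, while you unwind this to an explicit long-exact-sequence/5-lemma argument, and you also explicitly record the (needed but implicit in the paper) nonemptiness of $B(\cC/c)$; these are cosmetic differences.
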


\begin{proof}
We only treat the first case. Let $u \in \cC(c,c')$ be a weak left unit. The squares
\[
\xymatrix{
N_p (\cC /c) \ar[r]^{N_p(u_*)}\ar[d]^{d_0^p} & N_p(\cC /c')\ar[d]^{d_0^p} \ar[r] & N_p(\cC) \ar[d]^-{d_0^p} \\
N_0 (\cC /c) \ar[r]^{N_0(u_*)}  & N_0 (\cC/c') \ar[r]& N_0(\cC)
}
\]
are both cartesian. By Lemma \ref{inheritance-of-fibrances} the right-hand vertical map is a fibration, and so all the vertical maps are fibrations and hence both squares are homotopy cartesian. We now consider the left-hand square: since the bottom horizontal map is a weak equivalence by assumption, it follows that the upper horizontal one is as well. Therefore, the functor $u_* : \cC/c \to \cC/c'$ induces a levelwise equivalence on nerves. But the map $Bu_*: B(\cC/c) \to B(\cC/c')$ is also nullhomotopic by Lemma \ref{lem:overcategories-contractible}, so $B(\cC/c) \simeq *$.
\end{proof}

\section{Quillen's Theorems A and B and bi-semi-simplicial resolutions}\label{sec:Resolution}

Let $F: \cC\to \cD$ be a functor of discrete and unital categories. Quillen's Theorem A \cite{Quillen} is a classical and well-known criterion to show that $BF: B \cC \to B \cD$ is a weak equivalence. Similarly, Quillen's Theorem B \cite{Quillen} is a device to identify the homotopy fibre of $BF$. In this section, we prove generalisations of Quillen's Theorems for topological and nonunital categories. Those are stated as Theorems \ref{thm:quillena}, \ref{thm:quillena1} and \ref{thm:quillenb} below, but before we can state them precisely, we need to introduce a construction that is used in the proofs. 

\begin{defn}
Let $F: \cC \to \cD$ be a continuous functor between non-unital topological categories. Let $(F/\cD)_{p,q}$ be the space of all pairs in $N_p\cC \times N_{q+1}\cD$ of the form $(a_0 \to \cdots \to a_p, F(a_p) \to b_0 \to \cdots \to b_q)$ (of course, the unnamed arrows are part of the data). The $(F/\cD)_{p,q}$ form, in an evident way, a bi-semi-simplicial space. It has augmentation maps 
\[
\eps_{p,q}: (F/\cD)_{p,q} \lra \cC_p; \; (a_0 \to \cdots \to a_p, F(a_p) \to b_0 \to \cdots \to b_q) \longmapsto (a_0 \to \cdots \to a_p)
\]
and 
\[
\eta_{p,q}: (F/\cD)_{p,q} \lra \cD_q; \; (a_0 \to \cdots \to a_p, F(a_p) \to b_0 \to \cdots \to b_q) \longmapsto (b_0 \to \cdots \to b_q).
\]

Dually, let $(\cD/F)_{p,q}$ be the space of all pairs in $N_p\cC \times N_{q+1}\cD$ of the form $(a_0 \to \cdots \to a_p,  b_0 \to \cdots \to b_q \to F(a_0))$. The $(\cD/F)_{p,q}$ form, in an obvious way, a bi-semi-simplicial space. It has augmentation maps 
\[
\xi_{p,q}: (\cD/F)_{p,q} \lra \cC_p; \; (a_0 \to \cdots \to a_p,  b_0 \to \cdots \to b_q \to F(a_0)) \longmapsto (a_0 \to \cdots \to a_p)
\]
and 
\[
\zeta_{p,q}: (\cD/F)_{p,q} \lra \cD_q; \;(a_0 \to \cdots \to a_p,  b_0 \to \cdots \to b_q \to F(a_0))  \longmapsto (b_0 \to \cdots \to b_q).
\]
\end{defn}

For the rest of this section we shall makes statements about both constructions, but only prove them in the first case: the second is dual.

\begin{lem}\label{lem:augmentation-trick-triangle-commutes}
The diagrams
\[
\xymatrix{
 & \norm{(F/\cD)_{\bullet,\bullet}} \ar[dl]_{\norm{\eps_{\bullet,\bullet}}} \ar[dr]^{\norm{\eta_{\bullet,\bullet}}} & \\
\norm{\cC_{\bullet}} \ar[rr]^{\norm{ F_\bullet }} & & \norm{\cD_{\bullet}}
}
\]
and 
\[
\xymatrix{
 & \norm{(\cD/F)_{\bullet,\bullet}} \ar[dl]_{\norm{\xi_{\bullet,\bullet}}} \ar[dr]^{\norm{\zeta_{\bullet,\bullet}}} & \\
\norm{\cC_{\bullet}} \ar[rr]^{\norm{F_\bullet }} & & \norm{\cD_{\bullet}}
}
\]
are (naturally) homotopy commutative.
\end{lem}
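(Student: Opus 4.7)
My plan is to exhibit an explicit homotopy by concatenating the $\cC$-chain and $\cD$-chain of a point in $(F/\cD)_{p,q}$ into a single chain of composable arrows in $\cD$. For each $p,q\geq 0$ I would define a continuous ``concatenation'' map
\[
\mathcal{N}_{p,q}\colon (F/\cD)_{p,q}\lra \cD_{p+q+1},\qquad (a_0\to\cdots\to a_p,\,F(a_p)\to b_0\to\cdots\to b_q)\longmapsto (F(a_0)\to\cdots\to F(a_p)\to b_0\to\cdots\to b_q).
\]
The point of $\mathcal{N}_{p,q}$ is that both maps of interest factor through it after an appropriate face operator: $d_{p+1}d_{p+2}\cdots d_{p+q+1}\circ \mathcal{N}_{p,q}=F_\bullet\circ\eps_{p,q}$, and $d_0d_1\cdots d_p\circ \mathcal{N}_{p,q}=\eta_{p,q}$.

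Let $\iota_p\colon \Delta^p\hookrightarrow \Delta^{p+q+1}$ and $\iota_q\colon \Delta^q\hookrightarrow \Delta^{p+q+1}$ be the affine embeddings corresponding to these two face maps. Then in $\norm{\cD_\bullet}$ we have $[F_\bullet\eps_{p,q}(x),t]=[\mathcal{N}_{p,q}(x),\iota_p(t)]$ and $[\eta_{p,q}(x),s]=[\mathcal{N}_{p,q}(x),\iota_q(s)]$. Since $\Delta^{p+q+1}$ is convex, the straight-line path $u\mapsto (1-u)\iota_p(t)+u\iota_q(s)$ stays inside $\Delta^{p+q+1}$, and I would use this to define
\[
H_{p,q}\colon (F/\cD)_{p,q}\times\Delta^p\times\Delta^q\times[0,1]\lra \norm{\cD_\bullet},\qquad (x,t,s,u)\longmapsto \big[\mathcal{N}_{p,q}(x),\,(1-u)\iota_p(t)+u\iota_q(s)\big].
\]
By construction $H_{p,q}(-,-,-,0)$ realises $\norm{F_\bullet}\circ\norm{\eps_{\bullet,\bullet}}$ on the $(p,q)$-piece and $H_{p,q}(-,-,-,1)$ realises $\norm{\eta_{\bullet,\bullet}}$.

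The substantive step is then to verify that the family $\{H_{p,q}\}$ descends to a continuous homotopy $H\colon \norm{(F/\cD)_{\bullet,\bullet}}\times [0,1]\to\norm{\cD_\bullet}$. For $\alpha\in\Delta_{\inj}([p'],[p])$ and $\beta\in\Delta_{\inj}([q'],[q])$, introduce the injection $\widehat{\alpha\times\beta}\colon [p'+q'+1]\hookrightarrow [p+q+1]$ equal to $\alpha$ on $[p']$ and to $j\mapsto p+1+\beta(j)$ on the remaining block. A short direct calculation shows that $\mathcal{N}_{p',q'}\circ (\alpha\times\beta)^*=(\widehat{\alpha\times\beta})^*\circ \mathcal{N}_{p,q}$, and that the affine maps satisfy $\widehat{\alpha\times\beta}_*\iota_{p'}=\iota_p\alpha_*$ and $\widehat{\alpha\times\beta}_*\iota_{q'}=\iota_q\beta_*$; together these identities imply that the image of $H_{p,q}$ in $\norm{\cD_\bullet}$ is invariant under the relation $((\alpha\times\beta)^*x,t,s)\sim (x,\alpha_*t,\beta_*s)$, so $H$ descends to the quotient. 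This bookkeeping check is the only step requiring real care; once done, evaluating $H$ at $u=0,1$ gives the desired homotopy. The second triangle is treated identically with the dual concatenation $(\cD/F)_{p,q}\to \cD_{p+q+1}$ sending $(a_0\to\cdots\to a_p,\,b_0\to\cdots\to b_q\to F(a_0))$ to $(b_0\to\cdots\to b_q\to F(a_0)\to\cdots\to F(a_p))$, with the roles of $\iota_p$ and $\iota_q$ interchanged.
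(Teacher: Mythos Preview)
Your proposal is correct and is essentially identical to the paper's proof: both push a point of $(F/\cD)_{p,q}$ forward to $\cD_{p+q+1}$ via the same concatenation map and use the straight-line path in $\Delta^{p+q+1}$ between the two face inclusions (the paper writes the path as $(tr,(1-t)s)$, which is exactly your $(1-u)\iota_p(t)+u\iota_q(s)$ up to reparametrisation). You have spelled out the compatibility check with the bi-semi-simplicial identifications more explicitly than the paper, which simply asserts that the formula ``respects the simplicial relations''.
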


\begin{proof}
For $p,q\geq 0$, we define a map
\[
H_{p,q}: I \times (F/\cD)_{p,q} \times \Delta^p \times \Delta^q \lra \norm{\cD_\bullet}
\]
by sending $(t;a_0\to\cdots\to a_p,F(a_p)\to b_0 \to \cdots \to b_q;r,s)$ to
\[
(F(a_0) \to \cdots \to F(a_p)\to b_0 \to \cdots \to b_q;tr,(1-t)s) \in  \cD_{p+q+1} \times \Delta^{p+q+1}.
\]
This respects the simplicial relations and hence descends to a map $H: I \times \norm{(F/\cD)_{\bullet,\bullet}} \to \norm{\cD_\bullet}$ (we have used that taking products preserves quotient maps in the category of compactly generated spaces). This satisfies $H(0,-)=\norm{\eta_{\bullet,\bullet}}$ and $H(1,-)= \norm{F_\bullet} \circ \norm{\eps_{\bullet,\bullet}}$.
\end{proof}

\begin{lem}\label{lem:map-from-resolution-back-to-source-is-equiv}
If $\cD$ is unital, then $\norm{\eps_{\bullet,\bullet}} : \norm{(F/\cD)_{\bullet,\bullet}} \to \norm{\cC_{\bullet}}$ and $\norm{\xi_{\bullet,\bullet}} : \norm{(\cD/F)_{\bullet,\bullet}} \to \norm{\cC_{\bullet}}$ are weak homotopy equivalences. 
\end{lem}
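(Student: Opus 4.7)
The plan is to fix $p$, construct an extra degeneracy on the augmented semi-simplicial space $\eps_{p,\bullet} : (F/\cD)_{p,\bullet} \to \cC_p$, conclude that $\norm{(F/\cD)_{p,\bullet}} \to \cC_p$ is a homotopy equivalence for each $p$, and then deduce the statement about $\norm{\eps_{\bullet,\bullet}}$ from Theorem \ref{thm:levelwiseequivalence} via the iterated realisation formula \eqref{geometric-realization-bisemisimplicial}.

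Concretely, fix $p$ and define maps $g_{q+1} : (F/\cD)_{p,q} \to (F/\cD)_{p,q+1}$ for $q \geq -1$ by inserting the identity morphism of $F(a_p)$ at the beginning of the $\cD$-chain, that is,
\[
g_{q+1}\bigl(a_0 \to \cdots \to a_p,\; F(a_p) \to b_0 \to \cdots \to b_q\bigr) \;=\; \bigl(a_0 \to \cdots \to a_p,\; F(a_p) \xrightarrow{\id} F(a_p) \to b_0 \to \cdots \to b_q\bigr),
\]
with the convention that for $q=-1$, writing $(F/\cD)_{p,-1} = \cC_p$, the map $g_0$ sends $(a_0 \to \cdots \to a_p)$ to $(a_0 \to \cdots \to a_p,\, F(a_p)\xrightarrow{\id} F(a_p))$. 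This uses that $\cD$ is unital. I would then verify directly that the identities of the dual part of Lemma \ref{lem:ExtraDeg} hold: $d_0 g_{q+1} = \mathrm{Id}$ (because the composition $F(a_p) \xrightarrow{\id} F(a_p) \to b_0$ equals the original arrow $F(a_p) \to b_0$), $d_i g_{q+1} = g_q d_{i-1}$ for $0 < i \leq q+1$ (the inserted identity is untouched when one removes any $b_j$, so the operations commute), and $\eps_0 g_0 = \mathrm{Id}$ on $\cC_p$.

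By Lemma \ref{lem:ExtraDeg} (dual form) this gives, for each $p$, a homotopy equivalence $\norm{\eps_{p,\bullet}} : \norm{(F/\cD)_{p,\bullet}} \to \cC_p$. These assemble into a map of semi-simplicial spaces $[p] \mapsto \norm{(F/\cD)_{p,\bullet}} \to \cC_p$, which is a levelwise weak homotopy equivalence. Using the homeomorphism
\[
\norm{(F/\cD)_{\bullet,\bullet}} \;\cong\; \norm{[p] \mapsto \norm{(F/\cD)_{p,\bullet}}}
\]
from \eqref{geometric-realization-bisemisimplicial} and Theorem \ref{thm:levelwiseequivalence}, the induced map on geometric realisations $\norm{\eps_{\bullet,\bullet}} : \norm{(F/\cD)_{\bullet,\bullet}} \to \norm{\cC_\bullet}$ is a weak homotopy equivalence, as required. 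The proof for $\xi_{\bullet,\bullet}$ is dual, inserting $\id_{F(a_0)}$ at the end of the $\cD$-chain and applying the first part of Lemma \ref{lem:ExtraDeg}.

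The only real obstacle is bookkeeping: one must be careful that the face maps in the $q$-direction of $(F/\cD)_{p,\bullet}$ correspond to face maps of the underlying $(q+1)$-simplex of $\cD$ that fix the initial vertex $F(a_p)$ (so $d_i$ here removes $b_i$, not the $F(a_p)$), and then check that the inserted identity plays the role of a would-be extra degeneracy $s_{-1}$. Once the indexing is sorted out, the verification is formal and entirely uses the existence of units only in $\cD$.
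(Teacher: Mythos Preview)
Your proof is correct and follows essentially the same approach as the paper: both fix $p$, insert $\id_{F(a_p)}$ at the start of the $\cD$-chain to produce an extra degeneracy of the second type in Lemma \ref{lem:ExtraDeg}, and then invoke Theorem \ref{thm:levelwiseequivalence} to pass to the realisation in the $p$-direction. Your explicit verification of the identities and your remark about the indexing of the $q$-direction face maps are accurate and match the paper's construction.
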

\begin{proof}
By Theorem \ref{thm:levelwiseequivalence}, it is enough to prove that $\norm{(F/\cD)_{p,\bullet}}\to \cC_p$ is a weak homotopy equivalence for all $p$. We we show that the augmented semi-simplicial space $\epsilon_{p, \bullet} : (F/\cD)_{p,\bullet}\to \cC_p$ has an extra degeneracy of the second type described in Lemma \ref{lem:ExtraDeg}. Define $g_0 : \cC_p \to (F/\cD)_{p,0}$ by
$$(a_0\to \cdots\to a_p) \longmapsto (a_0\to \cdots\to a_p, F(a_p)\stackrel{\id}{\to}F(a_p))$$
and $g_{q+1} : (F/\cD)_{p,q} \to (F/\cD)_{p,q+1}$ by 
\begin{align*}
&(a_0\to \cdots \to a_p, F(a_p)\to b_0 \to \cdots \to a_q)\\
&\quad \longmapsto (a_0\to \cdots \to a_p,F(a_p)\stackrel{\id}{\to} F(a_p)\to b_0 \to \cdots \to a_q).
\end{align*}
These satisfy the conditions in Lemma \ref{lem:ExtraDeg}, showing that $\norm{(F/\cD)_{p,\bullet}}\to \cC_p$ is a homotopy equivalence.
\end{proof}

For non-unital categories, the conclusion of Lemma \ref{lem:map-from-resolution-back-to-source-is-equiv} does not hold without further hypotheses. If we do not have units then, rather than the explicit homotopy coming from an extra degeneracy used in the proof of the last lemma, note that for $a=(a_0 \to \cdots \to a_p) \in \cC_p$, we have
\[
\eps_{p,\bullet}^{-1}(a)= N_\bullet (F(a_p) / \cD)
\]
and 
\[
\xi_{p,\bullet}^{-1}(a)= N_\bullet (\cD / F(a_0)),
\]
the semi-simplicial nerves of over- and under-categories. We have axiomatised the contractibility of these as soft left- or right-units, and we will show that under appropriate  fibrancy conditions this is enough to get the conclusion of Lemma \ref{lem:map-from-resolution-back-to-source-is-equiv}.

\begin{lem}\label{lem:map-from-resolution-to-source-fibraton}
If $\cD$ is left fibrant, then the augmentation map $\eps_{p,q}: (F/\cD)_{p,q}\to \cC_p$ is a fibration. 
If $\cD$ is right fibrant, then the augmentation map $\xi_{p,q}: (\cD/F)_{p,q}\to \cC_p$ is a fibration. 
\end{lem}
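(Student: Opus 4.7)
My plan is to realise $\eps_{p,q}$ as an iterated pullback of the source fibration $d_1 : \cD_1 \to \cD_0$, which is a fibration by the left fibrancy of $\cD$. The basic observation is that a point of $(F/\cD)_{p,q}$ amounts to a chain $a = (a_0 \to \cdots \to a_p) \in \cC_p$ together with a $(q+1)$-tuple $(g_0, \ldots, g_q)$ of morphisms in $\cD$ subject to the matching conditions $d_1(g_0) = F(a_p)$ and $d_1(g_{i+1}) = d_0(g_i)$ for $0 \leq i < q$. In other words, the chain $F(a_p) \to b_0 \to \cdots \to b_q$ is recovered by listing its constituent arrows in $\cD_1$.

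Concretely I would set $Y_{-1} := \cC_p$ equipped with the structure map $\sigma_{-1} := F \circ d_0^p : \cC_p \to \cD_0$ (which extracts the last object $a_p$ and applies $F$). For $0 \leq i \leq q$ I then inductively form the pullback
\[
\xymatrix{
Y_i \ar[r]^-{\pr_2} \ar[d]_-{\pi_i} & \cD_1 \ar[d]^-{d_1} \\
Y_{i-1} \ar[r]^-{\sigma_{i-1}} & \cD_0
}
\]
and define the next structure map $\sigma_i := d_0 \circ \pr_2 : Y_i \to \cD_0$, which records the target of the most recently appended morphism. Unwinding the definitions gives a natural homeomorphism $Y_q \cong (F/\cD)_{p,q}$ under which the composition $\pi_0 \circ \pi_1 \circ \cdots \circ \pi_q : Y_q \to Y_{-1}$ is identified with $\eps_{p,q}$.

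Since pullbacks of fibrations in compactly generated spaces are again fibrations, each $\pi_i$ is a fibration, and therefore so is their composition $\eps_{p,q}$. The dual claim for $\xi_{p,q}$ is proved by exactly the same argument with sources and targets interchanged: one builds up the chain $b_0 \to \cdots \to b_q \to F(a_0)$ from the right by iteratively pulling back the target fibration $d_0 : \cD_1 \to \cD_0$, which is a fibration by the right fibrancy hypothesis. I do not anticipate any real obstacle here; the only thing to be careful about is choosing the ``current endpoint'' map $\sigma_{i-1}$ at each stage so that the pullback actually attaches a morphism composable with the previously constructed chain, but this is just a matter of bookkeeping.
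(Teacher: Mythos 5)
Your proof is correct. Both you and the paper realise $\eps_{p,q}$ as an iterated pullback of the source map $d_1 : \cD_1 \to \cD_0$, but you slice the pullback differently. The paper uses exactly two cartesian squares: it first pulls $(F/\cD)_{p,q} \to \cC_p$ back to the case $p=0$ along $d_0^p : \cC_p \to \cC_0$, and then exhibits $\eps_{0,q}$ as the pullback of $d_1 \cdots d_{q+1} : \cD_{q+1} \to \cD_0$ along $F_0$; the latter map is a fibration by appealing to Lemma~\ref{inheritance-of-fibrances}. You instead build up the chain $F(a_p) \to b_0 \to \cdots \to b_q$ one arrow at a time, giving $q+1$ small pullback squares, each a base change of $d_1$ itself. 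This is slightly more elementary in that it bypasses the citation of Lemma~\ref{inheritance-of-fibrances} (whose proof is in any case the same kind of cartesian-square argument, here effectively inlined), at the cost of a longer tower of pullbacks; the paper's two-square version is more compact and keeps the $p$- and $q$-directions conceptually separate. Your treatment of the dual statement, pulling back $d_0 : \cD_1 \to \cD_0$ from the other end of the chain, is likewise correct.
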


\begin{proof}
Observe that both squares
\[
\xymatrix{
(F/\cD)_{p,q} \ar[d]^{\eps_{p,q}} \ar[r]^{d_0^p} & (F/\cD)_{0,q} \ar[r]^{\gamma}\ar[d]^{\eps_{0,q}} &\cD_{q+1} \ar[d]^{d_1 \cdots d_{q+1}}\\
\cC_p \ar[r]^{d_0^p} & \cC_0 \ar[r]^{F_0} & \cD_0,
}
\]
where $\gamma ( a_0, F(a_0) \to b_0 \to \cdots \to b_q) :=(F(a_0) \to b_0 \to \cdots \to b_q)$, are cartesian, and use Lemma \ref{inheritance-of-fibrances}.
\end{proof}

\begin{cor}\label{lem:map-from-resolution-back-to-source-is-equiv-nonunital}
If $\cD$ is left fibrant and has soft right units, then $\norm{\eps_{\bullet,\bullet}}: \norm{(F/\cD)_{\bullet,\bullet}} \to B\cC$ is a weak equivalence. Dually, if $\cD$ is right fibrant and has soft left units, then $\norm{\xi_{\bullet,\bullet}}: \norm{(\cD/F)_{\bullet,\bullet}} \to B\cC$ is a weak equivalence.\qedhere
\end{cor}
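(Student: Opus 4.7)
The plan is to combine Theorem \ref{thm:levelwiseequivalence} with Lemma \ref{lem:augmented-fibration}. Regarding $\cC_\bullet$ as a bi-semi-simplicial space constant in the second direction, Corollary \ref{cor:constant-simplicial-space} together with the homeomorphism \eqref{geometric-realization-bisemisimplicial} identifies $\norm{\eps_{\bullet,\bullet}}$ with the realisation of the semi-simplicial map
\[
[p] \longmapsto \bigl( \norm{(F/\cD)_{p,\bullet}} \xrightarrow{\norm{\eps_{p,\bullet}}} \cC_p \bigr).
\]
By Theorem \ref{thm:levelwiseequivalence} it therefore suffices to prove that $\norm{\eps_{p,\bullet}}$ is a weak homotopy equivalence for each fixed $p$.

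Fix $p \geq 0$. Since $\cD$ is left fibrant, Lemma \ref{lem:map-from-resolution-to-source-fibraton} shows that $\eps_{p,q} : (F/\cD)_{p,q} \to \cC_p$ is a fibration for every $q$, hence in particular a quasifibration. Lemma \ref{lem:augmented-fibration} then yields, for each $a = (a_0 \to \cdots \to a_p) \in \cC_p$, a weak homotopy equivalence
\[
\norm{\eps_{p,\bullet}^{-1}(a)} \stackrel{\sim}{\lra} \hofib_a \norm{\eps_{p,\bullet}}.
\]

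Unwinding the definitions, $\eps_{p,\bullet}^{-1}(a)$ is canonically identified with the nerve $N_\bullet(F(a_p) \backslash \cD)$, whose realisation is the classifying space $B(F(a_p) \backslash \cD)$; this is contractible by the soft right units hypothesis on $\cD$. Consequently every homotopy fibre of $\norm{\eps_{p,\bullet}}$ is contractible, and crucially non-empty, from which it follows (via the long exact sequence, noting that a non-empty contractible hofibre gives surjectivity on $\pi_0$ and bijectivity on all higher homotopy groups) that $\norm{\eps_{p,\bullet}}$ is a weak homotopy equivalence. The dual claim is proved by the same argument with over- and under-categories interchanged, using right fibrancy in place of left. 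The only step requiring genuine care is the initial reduction to the levelwise statement, keeping track of which fibrancy and softness hypotheses line up with which augmentation; once that is done the result is a direct assembly of Lemmas \ref{lem:map-from-resolution-to-source-fibraton} and \ref{lem:augmented-fibration}, Corollary \ref{cor:constant-simplicial-space}, and the soft-units hypothesis.
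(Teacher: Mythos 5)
Your proof is correct and takes essentially the same approach as the paper: verify via Lemma \ref{lem:map-from-resolution-to-source-fibraton} that the $\eps_{p,q}$ are fibrations, apply Lemma \ref{lem:augmented-fibration} to identify the fibres of $\norm{\eps_{p,\bullet}}$ with classifying spaces of under-categories, invoke soft right units to get contractibility, and then realise in the $p$-direction using Theorem \ref{thm:levelwiseequivalence}. The appeal to Corollary \ref{cor:constant-simplicial-space} is superfluous (the reduction to the levelwise statement is already immediate from \eqref{geometric-realization-bisemisimplicial} and the fact that the maps $\norm{\eps_{p,\bullet}}$ form a semi-simplicial map into $\cC_\bullet$ itself), but this is harmless.
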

\begin{proof}
By Lemma \ref{lem:map-from-resolution-to-source-fibraton} the maps $\eps_{p,q}: (F/\cD)_{p,q}\to \cC_p$ are fibrations, so Lemma \ref{lem:augmented-fibration} applies to $\eps_{p,\bullet}: (F/\cD)_{p,\bullet}\to \cC_p$ and so for each $a=(a_0 \to \cdots \to a_p) \in \cC_p$ the map
$$B(F(a_p)/\cD) = \norm{\eps_{p,\bullet}^{-1}(a)} \lra \hofib_a \norm{\eps_{p,\bullet}}$$
is a weak equivalence. But as $\cD$ has soft right units the source of this map is contractible, and hence $\norm{(F/\cD)_{p,\bullet}} \to \cC_p$ is a weak equivalence. Then claim then follows by geometrically realising in the $p$-direction and using Theorem \ref{thm:levelwiseequivalence}.
\end{proof}

To make use of these resolutions, we shall also need to know that the maps $\eta_{p,q}$ and $\zeta_{p,q}$ are fibrations, and the final result of this section is a criterion for this to hold.

\begin{lem}\label{lem:map-to-target-fibration}
\mbox{}
\begin{enumerate}[(i)]
\item If $\eta_{p,0}$ is a fibration, then so is $\eta_{p,q}$ for all $q \geq 0$.

\item If $\eta_{0,0}$ is a fibration and $\cC$ is right fibrant, then $\eta_{p,0}$ is a fibration, for all $p \geq 0$.

\item If $F_0: \cC_0 \to \cD_0$ is a fibration and $\cD$ is right fibrant, then $\eta_{0,0}$ is a fibration.
\end{enumerate}
Dually,
\begin{enumerate}[(i)]
\setcounter{enumi}{3}
\item If $\zeta_{p,0}$ is a fibration, then so is $\zeta_{p,q}$ for all $q \geq 0$.

\item If $\zeta_{0,0}$ is a fibration and $\cC$ is left fibrant, then $\zeta_{p,0}$ is a fibration, for all $p \geq 0$.

\item If $F_0: \cC_0 \to \cD_0$ is a fibration and $\cD$ is left fibrant, then $\zeta_{0,0}$ is a fibration.
\end{enumerate}
\end{lem}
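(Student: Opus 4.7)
The plan is to prove each of (i), (ii), (iii) by exhibiting the relevant augmentation map as either a composition of fibrations or a pullback of a fibration, then invoking stability of fibrations under pullback and composition, together with Lemma \ref{inheritance-of-fibrances}, which upgrades the fibrancy hypotheses on $\cC$ or $\cD$ to fibrancy of iterated face maps of the nerve.

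For (iii), I would observe that $(F/\cD)_{0,0}$ is by definition the pullback $\cC_0 \times_{\cD_0} \cD_1$ of the source map $d_1 : \cD_1 \to \cD_0$ along $F_0$, and that $\eta_{0,0}$ is the composition
\[
(F/\cD)_{0,0} \lra \cD_1 \xrightarrow{d_0} \cD_0
\]
of the other projection with the target map. If $F_0$ is a fibration, then the projection is a fibration (as a pullback); if $\cD$ is right fibrant, $d_0$ is a fibration; hence $\eta_{0,0}$ is a fibration.

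For (ii), I would verify that the square
\[
\xymatrix{
(F/\cD)_{p,0} \ar[r] \ar[d]_{\eps_{p,0}} & (F/\cD)_{0,0} \ar[d]^{\eps_{0,0}} \\
\cC_p \ar[r]_{d_0^p} & \cC_0
}
\]
is cartesian, where the top horizontal map sends $(a_0 \to \cdots \to a_p, F(a_p) \to b_0)$ to $(a_p, F(a_p) \to b_0)$. Right fibrancy of $\cC$ together with Lemma \ref{inheritance-of-fibrances} makes $d_0^p$ a fibration, so the top map is too; then $\eta_{p,0}$ equals the composition of this top map with $\eta_{0,0}$, both fibrations. For (i), I would analogously verify that the square
\[
\xymatrix{
(F/\cD)_{p,q} \ar[r] \ar[d]_{\eta_{p,q}} & (F/\cD)_{p,0} \ar[d]^{\eta_{p,0}} \\
\cD_q \ar[r]_{d_1 d_2 \cdots d_q} & \cD_0
}
\]
is cartesian, with top map forgetting $b_1 \to \cdots \to b_q$ and bottom map extracting the initial vertex $b_0$; then $\eta_{p,q}$ is a fibration as a pullback of $\eta_{p,0}$.

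The dual statements (iv)--(vi) are proved by symmetric arguments, with source and target maps interchanged: for the dual of (iii) one exhibits $(\cD/F)_{0,0}$ as a pullback of the target map $d_0 : \cD_1 \to \cD_0$ along $F_0$ and uses left fibrancy of $\cD$ to get $d_1 : \cD_1 \to \cD_0$ a fibration; in the analogues of (ii) and (i), left fibrancy of $\cC$ supplies a fibration $\cC_p \to \cC_0$ extracting the initial vertex. The argument is purely mechanical; the only mild point requiring attention is keeping careful track of which face maps are sources versus targets so that every pullback square is oriented correctly.
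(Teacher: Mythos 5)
Your proof is correct and follows exactly the same route as the paper: for (iii) the pullback of $d_1 : \cD_1 \to \cD_0$ along $F_0$ exhibiting $\gamma : (F/\cD)_{0,0} \to \cD_1$ as a pullback of $F_0$, followed by composition with $d_0$; for (ii) the cartesian square over $d_0^p : \cC_p \to \cC_0$ and Lemma~\ref{inheritance-of-fibrances}; for (i) the cartesian square over $d_1 \cdots d_q : \cD_q \to \cD_0$. The squares are drawn with axes swapped relative to the paper, but the content is identical, and the dual statements are handled the same way.
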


\begin{proof}
The square
\[
\xymatrix{
(F/\cD)_{p,q} \ar[rr]^{d_1 \cdots d_q} \ar[d]^{\eta_{p,q}} & & (F/\cD)_{p,0} \ar[d]^{\eta_{p,0}}\\
\cD_q \ar[rr]^{d_1 \cdots d_q} & & \cD_0
}
\]
is cartesian, which proves (i). For part (ii), use that
\[
\xymatrix{
(F/\cD)_{p,0} \ar[r]^{\eps_{p,0}} \ar[d]^{d_0 \cdots d_0} & \cC_p \ar[d]^{d_0 \cdots d_0}\\
(F/\cD)_{0,0} \ar[r]^{\eps_{0,0}} & \cC_0
}
\]
is cartesian, Lemma \ref{inheritance-of-fibrances}, and that $\eta_{p,0} = \eta_{0,0} \circ (d_0)^p$. For part (iii), let $\gamma: (F/\cD)_{0,0}\to \cD_1$ be given by $\gamma (a,F(a)\to b)= (F(a)\to b)$. The diagram
\[
\xymatrix{
(F/\cD)_{0,0} \ar[r]^{\gamma} \ar[d]^{\eps_{0,0}} &  \cD_1 \ar[d]^{d_1}\\
\cC_0 \ar[r]^{F_0} &\cD_0
}
\]
is cartesian, so $\gamma$ is a fibration, hence so is $d_0 \circ \gamma = \eta_{0,0}$. 
\end{proof}

We can now state and prove our version of Quillen's Theorems A and B for non-unital topological categories.

\begin{thm}[Quillen's Theorem A]\label{thm:quillena}
Let $F: \cC \to \cD$ be a continuous functor. Assume that
\begin{enumerate}[(i)]
 \item $B(F / b)$ is contractible for each $b \in \cD_0$,
 \item $\norm{\eps_{\bullet,\bullet}}: \norm{(F/\cD)_{\bullet,\bullet}} \to B \cC$ is a weak equivalence,  
 \item $\eta_{p,0}: (F/\cD)_{p,q} \to N_q \cD$ is a fibration for each $p \geq 0$.
\end{enumerate}
Then $BF: B \cC \to B \cD$ is a weak homotopy equivalence.

Conditions (ii) and (iii) are satisfied if either
\begin{enumerate}[(i)]
\setcounter{enumi}{3}
 \item $\cC$ is right fibrant, $\cD$ is left fibrant and has soft right units and $\eta_{0,0}$ is a fibration or
 \item $\cC $ is right fibrant, $\cD$ has units and $\eta_{0,0}$ is a fibration.
\end{enumerate}
\end{thm}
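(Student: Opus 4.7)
The plan is to use the bi-semi-simplicial resolution $(F/\cD)_{\bullet,\bullet}$ as a bridge between $B\cC$ and $B\cD$. Lemma \ref{lem:augmentation-trick-triangle-commutes} provides a homotopy commutative triangle with vertices $\norm{(F/\cD)_{\bullet,\bullet}}$, $B\cC$, $B\cD$ and legs $\norm{\eps_{\bullet,\bullet}}$, $\norm{\eta_{\bullet,\bullet}}$, and $BF$. Assumption (ii) says that $\norm{\eps_{\bullet,\bullet}}$ is a weak equivalence, so it suffices to prove that $\norm{\eta_{\bullet,\bullet}}: \norm{(F/\cD)_{\bullet,\bullet}} \to B\cD$ is one as well.

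To do this, I first realise in the $p$-direction for each fixed $q$. Assumption (iii) combined with Lemma \ref{lem:map-to-target-fibration}(i) ensures $\eta_{p,q}$ is a fibration for all $p, q \geq 0$, so Lemma \ref{lem:augmented-fibration} applies to the augmented semi-simplicial space $\eta_{\bullet,q}: (F/\cD)_{\bullet,q} \to \cD_q$. The fibre of $\eta_{p,q}$ over $y = (b_0 \to \cdots \to b_q)$ consists of tuples $(a_0 \to \cdots \to a_p, F(a_p) \to b_0)$, and hence is precisely $N_p(F/b_0)$; its geometric realisation is $B(F/b_0) \simeq *$ by assumption (i). Consequently each $\norm{\eta_{\bullet,q}}: \norm{(F/\cD)_{\bullet,q}} \to \cD_q$ is a weak equivalence. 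Using the iterated-realisation homeomorphism \eqref{geometric-realization-bisemisimplicial}, $\norm{\eta_{\bullet,\bullet}}$ is the realisation of this levelwise weak equivalence of semi-simplicial spaces in the $q$-variable, and Theorem \ref{thm:levelwiseequivalence} finishes the main statement.

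For the addendum, both (iv) and (v) include the assumption that $\cC$ is right fibrant and $\eta_{0,0}$ is a fibration, so Lemma \ref{lem:map-to-target-fibration}(i)(ii) gives condition (iii) in either case. Under (iv), condition (ii) follows from Corollary \ref{lem:map-from-resolution-back-to-source-is-equiv-nonunital} applied with $\cD$ left fibrant and having soft right units, and under (v) it follows directly from Lemma \ref{lem:map-from-resolution-back-to-source-is-equiv} since $\cD$ is unital. The only step I expect to require genuine care is the identification of the fibres of $\eta_{\bullet,q}$ with the nerves of the slice categories $F/b_0$ and checking that Lemma \ref{lem:augmented-fibration} applies; the remainder is essentially an assembly of the structural lemmas set up earlier in the section.
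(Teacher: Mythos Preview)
Your proof is correct and follows essentially the same route as the paper's: reduce via Lemma~\ref{lem:augmentation-trick-triangle-commutes} and assumption (ii) to showing $\norm{\eta_{\bullet,\bullet}}$ is a weak equivalence, upgrade (iii) to all $\eta_{p,q}$ via Lemma~\ref{lem:map-to-target-fibration}(i), identify the fibres of $\norm{\eta_{\bullet,q}}$ with $B(F/b_0)$ via Lemma~\ref{lem:augmented-fibration}, and conclude levelwise; the addendum is handled by the same lemmas you cite. Your write-up is in fact slightly more explicit than the paper's about the fibre identification and the final appeal to Theorem~\ref{thm:levelwiseequivalence}.
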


There is a dual version, with a parallel proof.

\begin{thm}[Quillen's Theorem A, dual version]\label{thm:quillena1}
Let $F: \cC \to \cD$ be a continuous functor. Assume that
\begin{enumerate}[(i)]
 \item $B(b/F)$ is contractible for each $b \in \cD_0$,
 \item $\norm{\xi_{\bullet,\bullet}}: \norm{(\cD/F)_{\bullet,\bullet}} \to B \cC$ is a weak equivalence,  
 \item $\zeta_{p,q}: (\cD/F)_{p,q} \to N_q \cD$ is a fibration for each $p,q \geq 0$.
\end{enumerate}
Then $BF: B \cC \to B \cD$ is a weak homotopy equivalence.

Conditions (ii) and (iii) are satisfied if either
\begin{enumerate}[(i)]
\setcounter{enumi}{3}
 \item $\cC$ is left fibrant, $\cD$ is right fibrant and has soft left units and $\zeta_{0,0}$ is a fibration or
 \item $\cC $ is left fibrant, $\cD$ has units and $\zeta_{0,0}$ is a fibration.
\end{enumerate}
\end{thm}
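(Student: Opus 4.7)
The strategy exploits the homotopy-commutative triangle from Lemma \ref{lem:augmentation-trick-triangle-commutes}, which yields $BF \circ \norm{\xi_{\bullet,\bullet}} \simeq \norm{\zeta_{\bullet,\bullet}}$. Since hypothesis (ii) already gives that $\norm{\xi_{\bullet,\bullet}}$ is a weak equivalence, it suffices to prove that $\norm{\zeta_{\bullet,\bullet}}: \norm{(\cD/F)_{\bullet,\bullet}} \to B\cD$ is a weak equivalence; then $BF$ is a weak equivalence by 2-out-of-3. The plan is therefore to show the latter by fibre analysis in the $p$-direction followed by levelwise realisation in the $q$-direction.

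Fix $q \geq 0$ and consider the augmented semi-simplicial space $\zeta_{\bullet,q}:(\cD/F)_{\bullet,q}\to \cD_q$. By hypothesis (iii) each $\zeta_{p,q}$ is a fibration, so Lemma \ref{lem:augmented-fibration} applies: for every point $b_\bullet=(b_0\to\cdots\to b_q)\in\cD_q$ one gets a weak equivalence
\[
\norm{\zeta_{\bullet,q}^{-1}(b_\bullet)} \simeq \hofib_{b_\bullet}\norm{\zeta_{\bullet,q}}.
\]
Inspecting the definition, $\zeta_{p,q}^{-1}(b_\bullet)$ is the space of pairs $(a_0\to\cdots\to a_p,\, b_q\to F(a_0))$ (the chain $b_0\to\cdots\to b_q$ being fixed), and the $p$-direction face maps match those of $N_p(b_q/F)$. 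Hence $\norm{\zeta_{\bullet,q}^{-1}(b_\bullet)} = B(b_q/F)$, which is contractible by hypothesis (i). Therefore $\norm{\zeta_{\bullet,q}}:\norm{(\cD/F)_{\bullet,q}}\to\cD_q$ is a weak equivalence for every $q$.

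Now realise in the $q$-direction: the maps $q\mapsto\norm{\zeta_{\bullet,q}}$ form a map of semi-simplicial spaces which is a levelwise weak equivalence, so Theorem \ref{thm:levelwiseequivalence} combined with the homeomorphism \eqref{geometric-realization-bisemisimplicial} gives that $\norm{\zeta_{\bullet,\bullet}}:\norm{(\cD/F)_{\bullet,\bullet}}\to B\cD$ is a weak equivalence. Combined with the first paragraph, this proves that $BF$ is a weak equivalence.

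For the ``moreover'' clauses, condition (iii) is derived from the hypothesis that $\zeta_{0,0}$ is a fibration: left fibrancy of $\cC$ and Lemma \ref{lem:map-to-target-fibration}(v) promote this to $\zeta_{p,0}$ being a fibration for all $p$, and Lemma \ref{lem:map-to-target-fibration}(iv) then upgrades it to all $\zeta_{p,q}$. Condition (ii) follows in case (iv) from the dual half of Corollary \ref{lem:map-from-resolution-back-to-source-is-equiv-nonunital} (using right fibrancy of $\cD$ together with its soft left units), and in case (v) from the dual half of Lemma \ref{lem:map-from-resolution-back-to-source-is-equiv} (using that $\cD$ is unital). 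The one piece requiring genuine care is the identification of $\zeta_{\bullet,q}^{-1}(b_\bullet)$ with $N_\bullet(b_q/F)$ as semi-simplicial spaces, i.e.\ checking that the $p$-direction face maps on $(\cD/F)_{\bullet,q}$, after restriction to the fibre, reproduce precisely the face maps of the nerve of the under-category $b_q/F$; everything else is a straightforward assembly of the cited results.
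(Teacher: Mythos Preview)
Your proof is correct and follows essentially the same approach as the paper's proof of Theorem \ref{thm:quillena} (of which the paper simply says Theorem \ref{thm:quillena1} has a ``parallel proof''): reduce via the homotopy-commutative triangle to showing $\norm{\zeta_{\bullet,\bullet}}$ is a weak equivalence, then use Lemma \ref{lem:augmented-fibration} in the $p$-direction to identify fibres with $B(b_q/F)$, and finish with Theorem \ref{thm:levelwiseequivalence} in the $q$-direction. Your handling of the ``moreover'' clauses via Lemma \ref{lem:map-to-target-fibration}(iv)--(v), Corollary \ref{lem:map-from-resolution-back-to-source-is-equiv-nonunital}, and Lemma \ref{lem:map-from-resolution-back-to-source-is-equiv} is exactly what the paper has in mind.
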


In the case of discrete (unital) categories, this is a classical result of Quillen \cite{Quillen}. A version for (unital) simplicial categories was proven by Waldhausen \cite[\S 4]{Waldhausen}.

\begin{proof} [Proof of Theorem \ref{thm:quillena}]
That conditions (iv) or (v) imply conditions (ii) and (iii) follows from Lemmas \ref{lem:map-from-resolution-back-to-source-is-equiv}, Lemma \ref{lem:map-to-target-fibration} and Corollary \ref{lem:map-from-resolution-back-to-source-is-equiv-nonunital}. 

By Lemma \ref{lem:augmentation-trick-triangle-commutes}, it is enough to prove that $\norm{\eta_{\bullet,\bullet}} : \norm{(F/\cD)_{\bullet, \bullet}} \to \norm{\cD_\bullet}$ is a weak equivalence. Since each $\eta_{p,0}$ is a fibration, it follows by Lemma \ref{lem:map-to-target-fibration} that $\eta_{p,q}$ is a fibration for all $p,q \geq 0$, so by Lemma \ref{lem:augmented-fibration} for each $b= (b_0 \to \cdots \to b_q )\in \cD_q$ the natural map
\[B (F/ b) = \norm{\eta_{\bullet,q}^{-1}(b)} \lra \hofib_{b} \norm{\eta_{\bullet,q}}\]
is a weak equivalence. The source is contractible by assumption, so $\norm{\eta_{\bullet,q}}$ is a weak equivalence.
\end{proof}

Quillen's Theorem B \cite{Quillen} gives a criterion for identifying the homotopy fibre of a functor between ordinary categories. We now state and prove a version of this for non-unital topological categories; in fact we give a mild generalisation, due to Blumberg--Mandell \cite [Theorem 4.5]{BluMan}. In this case we only state one version: it has a dual version which we leave to the reader.

\begin{thm}[Quillen's Theorem B]\label{thm:quillenb}
Let 
\[
\xymatrix{
 \cA \ar[r]^{J} \ar[d]^{G} & \cC \ar[d]^{F}\\
 \cB \ar[r]^{H} & \cD
}
\]
be a commuting square of non-unital topological categories. Assume that 
\begin{enumerate}[(i)]
 \item $\cB$ and $\cD$ are left fibrant and have soft right units.
 \item $\cA$ and $\cC$ are right fibrant, and the maps $\eta_{0,0}: (G/\cB)_{0,0} \to \cB_0$ and $\eta_{0,0}: (F/\cD)_{0,0}\to \cD_0$ are fibrations.
 \item For each morphism $u: d \to d'$ in $\cD$, the functor $u_* : F/d \to F/d'$ induced by composition with $u$ induces a weak equivalence on classifying spaces.
 \item For each object $b \in \cB_0$, the functor $G/b \to F/H(b)$ induced by $J$ and $H$ is a weak equivalence.
\end{enumerate}
Then the square
\[
 \xymatrix{
B \cA \ar[r]^{BJ} \ar[d]^{BG} & B\cC \ar[d]^{BF}\\
B \cB \ar[r]^{BH} & B\cD
 }
\]
is homotopy cartesian.
\end{thm}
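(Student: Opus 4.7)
The plan is to use the bi-semi-simplicial resolutions of Section \ref{sec:Resolution} to replace the original square by an equivalent one, then apply Segal's Theorem \ref{lem:simplicial-hocartesianness} columnwise to reduce the question to one about classifying spaces of over-categories. Hypotheses (i) and (ii) are exactly what is needed so that Corollary \ref{lem:map-from-resolution-back-to-source-is-equiv-nonunital} provides weak equivalences $\norm{\eps^G_{\bullet,\bullet}}\colon \norm{(G/\cB)_{\bullet,\bullet}} \to B\cA$ and $\norm{\eps^F_{\bullet,\bullet}}\colon \norm{(F/\cD)_{\bullet,\bullet}} \to B\cC$, and so that Lemma \ref{lem:map-to-target-fibration} makes every $\eta^G_{p,q}$ and $\eta^F_{p,q}$ into a fibration. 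Combining this with the homotopy commuting triangles of Lemma \ref{lem:augmentation-trick-triangle-commutes}, it suffices to show that the square
\[
\xymatrix{
\norm{(G/\cB)_{\bullet,\bullet}} \ar[r] \ar[d]^{\norm{\eta^G_{\bullet,\bullet}}} & \norm{(F/\cD)_{\bullet,\bullet}} \ar[d]^{\norm{\eta^F_{\bullet,\bullet}}}\\
B\cB \ar[r]^{BH} & B\cD
}
\]
is homotopy cartesian.

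Rewriting each column as the realisation of a semi-simplicial map in $q$, say $\norm{\eta^F_{\bullet,q}}\colon \norm{(F/\cD)_{\bullet,q}}\to \cD_q$, my plan is to verify the hypothesis of Theorem \ref{lem:simplicial-hocartesianness}: that these semi-simplicial maps in $q$ are homotopy cartesian in the sense of Definition \ref{defn:hocartesian.morpism}. By Lemma \ref{lem:minimal-checking-for-hocartesianness}, it suffices to check the $d_0$ face maps at each level $q$, plus the case $(q,i)=(1,1)$. Because $\eta^F_{p,q}$ is a fibration, Lemma \ref{lem:augmented-fibration} identifies the homotopy fibre of $\norm{\eta^F_{\bullet,q}}$ over a chain $d_0\to\cdots\to d_q\in\cD_q$ with $B(F/d_0)$, and similarly on the $G$ side. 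In the case $(q,i)=(1,1)$ the face map preserves the initial object of the chain, so it induces the identity on fibres and the relevant square is homotopy cartesian for formal reasons.

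The main obstacle is the $d_0$ face maps, where the induced map on fibres is post-composition with the first morphism of the chain: $u_*\colon B(F/d_0)\to B(F/d_1)$ on the $F$ side, and $v_*\colon B(G/b_0)\to B(G/b_1)$ on the $G$ side. The former is a weak equivalence directly by hypothesis (iii). The latter is not a hypothesis of the theorem, and the workaround is the naturality square
\[
\xymatrix{
B(G/b_0) \ar[r]^{v_*} \ar[d] & B(G/b_1) \ar[d]\\
B(F/H(b_0)) \ar[r]^{H(v)_*} & B(F/H(b_1))
}
\]
induced by the commuting square of functors, in which the vertical maps are weak equivalences by (iv) and the bottom map is a weak equivalence by (iii) applied to the morphism $H(v)$ in $\cD$; by two-out-of-three the top map is too. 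Hence both $\norm{\eta^F_{\bullet,\bullet}}$ and $\norm{\eta^G_{\bullet,\bullet}}$ are homotopy cartesian as semi-simplicial maps in $q$.

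Theorem \ref{lem:simplicial-hocartesianness} now applies to each column separately; for $b\in\cB_0$, the homotopy fibre of $\norm{\eta^G_{\bullet,\bullet}}$ over $b$ is identified with that of $\norm{\eta^G_{\bullet,0}}\colon\norm{(G/\cB)_{\bullet,0}}\to\cB_0$ over $b$, which by Lemma \ref{lem:augmented-fibration} applied to the fibration $\eta^G_{p,0}$ is $B(G/b)$; analogously the homotopy fibre of $\norm{\eta^F_{\bullet,\bullet}}$ over $H(b)\in\cD_0$ is $B(F/H(b))$. The induced comparison $B(G/b)\to B(F/H(b))$ is a weak equivalence by hypothesis (iv). Since $\cB_0\to B\cB$ is $\pi_0$-surjective, this suffices to make the reduced square, and hence the original square, homotopy cartesian.
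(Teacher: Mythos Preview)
Your proof is correct and follows essentially the same route as the paper's: reduce to the $\eta$-square via Corollary \ref{lem:map-from-resolution-back-to-source-is-equiv-nonunital} and Lemma \ref{lem:augmentation-trick-triangle-commutes}, identify the fibres of $\norm{\eta^F_{\bullet,q}}$ and $\norm{\eta^G_{\bullet,q}}$ as classifying spaces of over-categories via Lemma \ref{lem:augmented-fibration}, verify homotopy cartesianness in the $q$-direction using (iii) for $F$ and the naturality square together with (iii) and (iv) for $G$, then apply Theorem \ref{lem:simplicial-hocartesianness} and conclude with (iv) and $0$-connectivity of $\cB_0\to B\cB$. The only cosmetic difference is that you invoke Lemma \ref{lem:minimal-checking-for-hocartesianness} to reduce to the $d_0$ squares and the $(1,1)$ square, whereas the paper simply observes that for $i>0$ the map on fibres is the identity; either way the substantive case is $d_0$.
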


\begin{proof}
Using the resolutions of the functors $F$ and $G$, by assumption (i) and Corollary \ref{lem:map-from-resolution-back-to-source-is-equiv-nonunital} it is enough to show that the square
\[
 \xymatrix{
  \norm{(G/\cB)_{\bullet,\bullet}} \ar[r] \ar[d]^{\norm{\eta^G_{\bullet,\bullet}}}&  \norm{(F/\cD)_{\bullet,\bullet}}\ar[d]^{\norm{\eta^F_{\bullet,\bullet}}}\\
  B \cB \ar[r]^{BH} & B \cD
  }
\]
is homotopy cartesian. Arguing as in the proof of Theorem \ref{thm:quillena}, which requires assumption (ii), we see that the maps
\[
\norm{\eta^F_{\bullet,q}}: \norm{(F/\cD)_{\bullet,q}} \lra \cD_q \quad\quad\quad  \norm{\eta^G_{\bullet,q}}:\norm{(G/\cB)_{\bullet,q}} \lra \cB_q
\]
are quasifibrations. In the commutative square
\begin{equation}\label{diagram:proofquillenb}
\begin{gathered}
\xymatrix{
\norm{(F/\cD)_{\bullet,q}} \ar[r]^{d_i} \ar[d]^{\norm{\eta^F_{\bullet,q}}}  & \norm{(F/\cD)_{\bullet,q-1}} \ar[d]^{\norm{\eta^F_{\bullet,q-1}}} \\
\cD_q \ar[r]^{d_i} & \cD_{q-1}
}
\end{gathered}
\end{equation}
the fibre over $x=(d_0 \stackrel{u_1}{\to} \cdots \stackrel{u_q}{\to} d_q) \in \cD_q$ is $B(F/d_0)$, and the induced map on fibres is either the identity (if $i>0$) or it is the fibre transport map $(u_1)_* : B(F/d_0) \to B(F/d_1)$, which is a weak equivalence by assumption (iii). Therefore by Theorem \ref{lem:simplicial-hocartesianness}, the squares
\begin{equation}\label{diagram:proofquillenb2}
 \begin{gathered}
\xymatrix{
 B (F/d) \ar[d] \ar[r] & \norm{(F/\cD)_{\bullet,0}} \ar[d] \ar[r] & \norm{(F/\cD)_{\bullet,\bullet}} \ar[d]^{\norm{\eta^F_{\bullet,\bullet}}} \\
 \{d\} \ar[r] & \cD_0 \ar[r] & B\cD
 }
\end{gathered}
\end{equation}
are both homotopy cartesian. For each morphism $f:b \to b'$ in $\cB$, the induced map $f_* : B (G/b) \to B (G/b')$ is a weak equivalence, since it fits into a commutative diagram
\[
 \xymatrix{
 B (G/b) \ar[r]^-{\simeq} \ar[d]^-{f_*} & B(F /Hb) \ar[d]^-{H(f)_*}_{\simeq}\\
 B (G/b') \ar[r]^-{\simeq} & B(F/Hb')
 }
\]
in which all other maps are weak equivalences by assumption (iii) and (iv). Therefore, in the analogue of the diagram (\ref{diagram:proofquillenb2}) for the functor $G$ both squares are also homotopy cartesian. For $b \in \cB_0$ the composition
\[
 B(G/b) \stackrel{\simeq}{\lra} \hofib_b \norm{\eta^G_{\bullet,\bullet}} \stackrel{BJ}{\lra} \hofib_{Hb} \norm{\eta^F_{\bullet,\bullet}}
\]
is equal to the composition
\[
 B (G/b) \stackrel{\simeq}{\lra} B F/(Hb) \stackrel{\simeq}{\lra} \hofib_{Hb} \norm{\eta^F_{\bullet,\bullet}}.
\]
Therefore, $BJ : \hofib_b \norm{\eta^G_{\bullet,\bullet}} \to \hofib_{Hb} \norm{\eta^F_{\bullet,\bullet}}$ is a weak equivalence for each $b \in \cB_0$; since the inclusion $\iota : \cB_0 \to B \cB$ is $0$-connected, this finishes the proof.
\end{proof}

\begin{remark}\label{rem:WhatIsAFibration}
We wish to record a technical point about the meaning of the term ``fibration" in Theorems \ref{thm:quillena} and \ref{thm:quillenb} (which is also implicitly used in the term ``fibrant"). While we have in mind Serre fibrations, what is used in the argument is: Hurewicz fibrations are ``fibrations"; ``fibrations" are preserved under pullback; composition of ``fibrations" are ``fibrations"; ``fibrations" are quasifibrations. For example, this allows one to take the class of Dold fibrations or, even more generally, Dold--Serre fibrations (i.e.\ maps which have the weak covering homotopy property with respect to discs).
\end{remark}

\section{Base changing spaces of objects}

For a non-unital topological category $\cC$ and a continuous map $f : X \to \cC_0$, we may form a new non-unital topological category $\cC^X$ as follows. We let $\cC^X_0$ be $X$, and $F_0 : \cC^X_0  \to \cC_0$ be $f$. Then we define $\cC^X_1$ as the pullback
\begin{equation}\label{eq:PullbackCat}
\begin{gathered}
\xymatrix{
\cC^X_1  \ar[r]^-{F_1} \ar[d] & \cC_1 \ar[d]^-{s \times t}\\
\cC^X_0 \times \cC^X_0 \ar[r]^-{F_0 \times F_0} & \cC_0 \times \cC_0.
}
\end{gathered}
\end{equation}
The left-hand maps define $s, t : \cC^X_1 \to \cC^X_0$, and the universal property of the pullback provides a map $c : \cC^X_1 \times_{\cC^X_0} \cC^X_1\to \cC^X_1$; this defines a non-unital topological category, and the $F_i$ define a continuous functor $F : \cC^X \to \cC$. (If $\cC$ had units, then $\cC^X$ does too.) 

\begin{thm}
If $\cC$ is fibrant and has weak right (or left) units, and $f$ is 0-connected, then $BF : B\cC^X \to B\cC$ is a weak equivalence.
\end{thm}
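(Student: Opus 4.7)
The plan is to apply Quillen's Theorem A (Theorem \ref{thm:quillena}) to $F : \cC^X \to \cC$ after a sequence of reductions; we treat the weak right units case, the other being dual. First I would reduce to the case where $f$ is a Serre fibration. Induction on $p$, using the pullback decomposition $\cC_{p} = \cC_{p-1} \times_{\cC_0} \cC_1$ and the fibrancy of $\cC$, shows that the object-recording map $\cC_p \to \cC_0^{p+1}$ is a Serre fibration for every $p \geq 1$. Factoring $f$ as $X \xrightarrow{\sim} X' \twoheadrightarrow \cC_0$ with the first map a weak equivalence and the second a Serre fibration, the induced map $\cC^X_p \to \cC^{X'}_p$ is the pullback of a levelwise weak equivalence $X^{p+1} \to (X')^{p+1}$ along the fibration $\cC_p \to \cC_0^{p+1}$, hence is itself a weak equivalence. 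Theorem \ref{thm:levelwiseequivalence} then gives $B\cC^X \xrightarrow{\sim} B\cC^{X'}$, so we may replace $f$ by this Serre fibration. Being both $0$-connected and a Serre fibration, the new $f$ is surjective.

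The square \eqref{eq:PullbackCat} shows that $\cC^X$ is fibrant. It also inherits weak right units from $\cC$: for $x \in X$, a weak right unit $u : b' \to f(x)$ in $\cC$ together with any lift $x' \in f^{-1}(b')$ (provided by surjectivity) yields a morphism $(x', x, u) \in \cC^X_1$ that is a weak right unit at $x$, via Remark \ref{rem:weak-units-fibrant} and the identification $\cC^X(y, x) = \cC(f(y), f(x))$. By Lemma \ref{overcategories-nonunital}, both $\cC$ and $\cC^X$ then have soft right units. I would now apply Theorem \ref{thm:quillena} to $F$: its condition (iv) is met, since $\cC^X$ is right fibrant, $\cC$ is left fibrant with soft right units, and Lemma \ref{lem:map-to-target-fibration}(iii) ensures $\eta_{0,0}$ is a fibration (because $F_0 = f$ is a fibration and $\cC$ is right fibrant). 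Hence conditions (ii) and (iii) of Theorem \ref{thm:quillena} hold automatically, and the only remaining task is condition (i): $B(F/b) \simeq *$ for every $b \in \cC_0$.

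The hard part is this last step, and it is where the $0$-connectedness of $f$ must be used in an essential way. The plan is to exploit the canonical identification of non-unital topological categories $F/b \cong (\cC/b)^{(F/b)_0}$, where $(F/b)_0 = X \times_{\cC_0} t^{-1}(b) \to t^{-1}(b) = (\cC/b)_0$ is the pullback of $f$ along the fibration $s : t^{-1}(b) \to \cC_0$ (itself a pullback of the fibrant structure $(s,t) : \cC_1 \to \cC_0 \times \cC_0$), and is therefore itself a $0$-connected Serre fibration. The category $\cC/b$ is right fibrant, and inherits weak right units, hence soft right units, from $\cC$. The main obstacle is that $\cC/b$ is not in general fully fibrant --- composition in $\cC$ need not be a fibration --- so the theorem cannot be applied to $\cC/b$ in a formal, recursive way. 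The plan is instead to run the bi-semi-simplicial resolution arguments of Section \ref{sec:Resolution} directly for the comparison functor $\Phi : F/b \to \cC/b$, using the right fibrancy and soft right units of $\cC/b$ together with Lemma \ref{lem:augmented-fibration} to identify the relevant homotopy fibres, and so deduce $B(F/b) \simeq *$ from the $0$-connectedness of $(F/b)_0 \to (\cC/b)_0$.
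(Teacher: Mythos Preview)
Your overall strategy---apply Theorem~\ref{thm:quillena} to $F$ via the resolution $(F/\cC)_{\bullet,\bullet}$---is exactly the paper's approach, and your verification that $\cC^X$ is fibrant and inherits weak (hence soft) units is also what the paper does. But you miss the key shortcut that makes the ``hard part'' trivial, and you add an unnecessary reduction.

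First, the reduction to $f$ a fibration is not needed. The paper observes that $(F/\cC)_{0,0}$ sits in a cartesian square
\[
\xymatrix{
(F/\cC)_{0,0} \ar[r] \ar[d]^-{\eps_{0,0} \times \eta_{0,0}}& \cC_1 \ar[d]^-{s \times t}\\
X \times \cC_0 \ar[r]^-{f \times \mathrm{id}} & \cC_0 \times \cC_0,
}
\]
so the \emph{full} fibrancy of $\cC$ makes $\eps_{0,0}\times\eta_{0,0}$ a fibration, and hence $\eta_{0,0}$ is a fibration by composing with the projection. Lemma~\ref{lem:map-to-target-fibration} then gives all $\eta_{p,q}$ fibrations, with no hypothesis on $f$.

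Second, and more importantly, the step you call ``the hard part'' evaporates once you notice that for any $x_0\in X$ the pullback square \eqref{eq:PullbackCat} gives a literal identification of non-unital categories
\[
F/F(x_0) \;=\; \cC^X/x_0.
\]
Indeed, an object of $F/F(x_0)$ is a pair $(a\in X,\ g:f(a)\to f(x_0))$, and $\cC^X(a,x_0)=\cC(f(a),f(x_0))$ by definition of $\cC^X$; the same holds for morphisms. You have already shown $\cC^X$ has soft units, so $B(\cC^X/x_0)\simeq *$, and hence $B(F/b_0)\simeq *$ whenever $b_0$ lies in the image of $f$. Since each $\eta_{\bullet,q}$ is a fibration, its homotopy fibres are constant on path-components, and $0$-connectedness of $f$ then gives $B(F/b_0)\simeq *$ for \emph{all} $b_0\in\cC_0$. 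No recursion into $\cC/b$, and no direct resolution argument for $\Phi:F/b\to\cC/b$, is required.

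Your proposed recursive plan is not obviously wrong, but it is vague precisely where it matters (you correctly note that $\cC/b$ is not fibrant), and the paper's one-line identification above renders it unnecessary.
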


\begin{proof}
We consider the resolution $(F/\cC)_{\bullet, \bullet}$ of the functor $F$. As $\cC$ is left fibrant and has weak right (say) units, it has soft right units by Lemma \ref{overcategories-nonunital}, and so Corollary \ref{lem:map-from-resolution-back-to-source-is-equiv-nonunital} applies and shows that $\norm{\epsilon_{\bullet, \bullet}} : \norm{(F/\cC)_{\bullet, \bullet}} \to B\cC^X$ is a weak equivalence. It remains to show that $\norm{\eta_{\bullet, \bullet}} : \norm{(F/\cC)_{\bullet, \bullet}} \to B\cC$ is a weak equivalence.

The space $(F/\cC)_{0,0}$ fits in to a cartesian square
\begin{equation*}
\xymatrix{
(F/\cC)_{0,0} \ar[r] \ar[d]^-{\epsilon_{0,0} \times \eta_{0,0}}& \cC_1 \ar[d]^-{s \times t}\\
\cC^X_0 \times \cC_0 \ar[r]^-{F_0 \times \mathrm{id}} & \cC_0 \times \cC_0,
}
\end{equation*}
and as $\cC$ is fibrant the right-hand vertical map is a fibration, and so $\eta_{0,0}$ is a fibration too. Furthermore, as $\cC$ is fibrant, \eqref{eq:PullbackCat} shows that $\cC^X$ is too. Hence by applying Lemma \ref{lem:map-to-target-fibration} (ii) then (i), each $\eta_{p,q}$ is a fibration. Hence, by Lemma \ref{lem:augmented-fibration}, for each $b = (b_0 \to \cdots \to b_q) \in \cC_q$ the map
$$B(\cC^X/b_0) = \norm{\eta_{\bullet,q}^{-1}(b)} \lra \hofib_b\norm{\eta_{\bullet,q}}$$
is a weak equivalence, so it is enough that the over-categories $B(\cC^X/b_0)$ be contractible for some object $b_0 \in \cC_0$ in each path-component. As $f : X \to \cC_1$ is 0-connected, we may suppose that $b_0 = F(x_0)$, but in this case $\cC^X/F(x_0) = \cC^X/x_0$, by \eqref{eq:PullbackCat}, so it is enough to show that $\cC^X$ has soft right units. As $\cC$ has weak right units so does $\cC^X$ (by Remark \ref{rem:weak-units-fibrant} and because both categories are fibrant), so by Lemma \ref{overcategories-nonunital} $\cC^X$ has soft right units as required.
\end{proof}

A typical application of this result is to take $X = \cC_0^\delta$ to be the set of objects of $\cC$ with the discrete topology, and $f : \cC_0^\delta \to \cC_0$ to be the identity function (which is 0-connected). This yields a category $\cC^\delta$ with discrete space of objects but the same space of maps between any two objects, which has a homotopy equivalent classifying spaces under the conditions given above.

\section{The Group-Completion Theorem}\label{sec:groupcompletion}

We shall take care to formulate and prove the group-completion theorem, and the main technical result underlying it, for homology with local coefficients. We therefore make the following definitions.

\begin{defn}
Let $\cL$ be a local coefficient system of $R$-modules on a space $X$. 
\begin{enumerate}[(i)]
\item The \emph{monodromy} of $\cL$ at $x \in X$ if the homomorphism $ \mu_x:\pi_1 (X,x) \to \Aut_{R\text{-Mod}} (\cL(x))$ induced from $\cL$. 
\item $\cL$ is called \emph{constant} if all monodromy homomorphisms are trivial.
\item $\cL$ is called \emph{abelian} if the images of all monodromy homomorphisms are abelian groups. 
\end{enumerate}
\end{defn}

\begin{assumption}
In the sequel, let $\cA$ be either 
\begin{enumerate}[(i)]
\item the class of constant local coefficient systems of $R$-modules, or
\item the class of abelian local coefficient systems of $R$-modules, or
\item the class of all local coefficient systems of $R$-modules.
\end{enumerate}
We say a map $f : X \to Y$ is an \emph{$\cA$-equivalence} if for every local coefficient system $\cL$ on $Y$ in the class $\cA$, the map
$$f_* : H_*(X ; f^*\cL) \lra H_*(Y;\cL)$$
is an isomorphism.
\end{assumption}

\begin{defn}
A commutative square of spaces
\[
 \xymatrix{
 W \ar[r] \ar[d]^{g} & X \ar[d]^{f} \\
 Z \ar[r]^{h} & Y
  }
\]
is called \emph{$\cA$-cartesian} if the induced map $\hofib_z (g) \to \hofib_{h(z)} (f)$ is an $\cA$-equivalence, for all $z \in Z$.
\end{defn}

\begin{remark}
Unlike for homotopy cartesian diagrams, the symmetry explained in Remark \ref{remark.hocartesianness-symmetric} does not generally hold for $\cA$-cartesian diagrams (though it does in case (iii)). A counterexample in case (i) is $R=\bZ$ when $W=Z=Y=*$ and $X=BG$ is the classifying space of an infinite acyclic group.
\end{remark}

The following homological analogue of Theorem \ref{lem:simplicial-hocartesianness} is the technical heart of the ``group-completion theorem'' and is due to McDuff and Segal \cite{McDuffSeg}. The notion of an $\cA$-cartesian map $f_\bullet: X_\bullet \to Y_\bullet$ is defined in analogy to Definition \ref{defn:hocartesian.morpism}.

\begin{thm}\label{thm:Acart}
If $f_\bullet: X_\bullet \to Y_\bullet$ is a $\cA$-cartesian map of semi-simplicial spaces, then the diagram
\begin{equation}\label{eq:good}
\begin{gathered}
 \xymatrix{
 X_0 \ar[r] \ar[d]^{f_0} & \norm{X_\bullet} \ar[d]^{\norm{f_\bullet}} \\
 Y_0 \ar[r] & \norm{Y_\bullet}
 }
\end{gathered}
\end{equation}
is $\cA$-cartesian. 
\end{thm}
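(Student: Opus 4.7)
The plan is to adapt the second proof of Theorem \ref{lem:simplicial-hocartesianness}, replacing its appeal to the Dold--Thom quasifibration criterion by a direct spectral-sequence argument with local coefficients. First, using the functorial factorisation exactly as in that proof, I would reduce to the case where every $f_p$ is a Hurewicz fibration. Factoring $f_p$ as $X_p \xrightarrow{h_p} Z_p \xrightarrow{g_p} Y_p$ with $h_p$ a weak equivalence and $g_p$ a Hurewicz fibration yields a semi-simplicial map $g_\bullet : Z_\bullet \to Y_\bullet$, and by Theorem \ref{thm:levelwiseequivalence} the induced map $\norm{X_\bullet} \to \norm{Z_\bullet}$ is a weak equivalence; the face squares of $g_\bullet$ remain $\cA$-cartesian because each $h_p$ is a weak equivalence. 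So we may assume each $f_p$ is a Hurewicz fibration, in which case the hypothesis reads: for each face $d_i : Y_p \to Y_{p-1}$ and each $y \in Y_p$, the inclusion of strict fibres $f_p^{-1}(y) \hookrightarrow f_{p-1}^{-1}(d_i(y))$ is an $\cA$-equivalence.

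The heart of the proof is a ``quasifibration-up-to-$\cA$'' statement: for any local system $\cL \in \cA$ on $\norm{Y_\bullet}$ and any $y_0 \in Y_0$, the inclusion $f_0^{-1}(y_0) \hookrightarrow \norm{f_\bullet}^{-1}(y_0) \simeq \hofib_{y_0}(\norm{f_\bullet})$ induces an isomorphism on $H_*(-;\cL)$. To prove it, observe that the $\cA$-cartesian hypothesis implies that, for each fixed $n$, the groups $H_n(f_p^{-1}(y);\cL)$ form a locally constant sheaf $\cH_n$ on $Y_p$ that is compatible with face maps, and hence descends to a graded local coefficient system $\cH_\bullet$ on $\norm{Y_\bullet}$. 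Combining the Leray spectral sequence of each Hurewicz fibration $f_q$ with the skeletal-filtration spectral sequence of $\norm{X_\bullet}$ (Section \ref{subsec:spectralsequence}) one should obtain a Serre-type spectral sequence
\[
E^2_{s,t} = H_s\bigl(\norm{Y_\bullet};\cH_t \otimes \cL \bigr) \Longrightarrow H_{s+t}\bigl(\norm{X_\bullet}; \norm{f_\bullet}^*\cL\bigr),
\]
whose restriction along $\{y_0\} \hookrightarrow \norm{Y_\bullet}$ degenerates to give the claimed isomorphism. Constructing this spectral sequence rigorously, and identifying its edge homomorphism with the map induced by the inclusion of a strict fibre, is the principal technical obstacle. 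An alternative approach that avoids an explicit double complex is to induct on skeleta, showing that each preimage $\norm{f_\bullet}^{-1}\!\bigl(\norm{Y_\bullet}^{(n)}\bigr) \to \norm{Y_\bullet}^{(n)}$ is an $\cA$-quasifibration by combining the pushout description of Section \ref{subsec:spectralsequence} with a Mayer--Vietoris comparison using $\cL$-coefficients; both routes rely on the hypothesis exclusively through the fact that face maps act on fibres by $\cA$-equivalences.

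Granted this quasifibration-up-to-$\cA$ statement, the theorem follows immediately. For each $x \in X_0$, setting $y_0 := f_0(x)$, the induced map $\hofib_x(f_0) \to \hofib_{y_0}(\norm{f_\bullet})$ is identified, after the reduction to fibrations in Step 1, with the inclusion $f_0^{-1}(y_0) \hookrightarrow \norm{f_\bullet}^{-1}(y_0)$, which we have just shown is an $\cA$-equivalence. This is precisely the definition of the square \eqref{eq:good} being $\cA$-cartesian at $x$, completing the argument.
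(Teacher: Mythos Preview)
Your reduction to levelwise fibrations is fine, but the core step has a genuine gap. To show that \eqref{eq:good} is $\cA$-cartesian you must prove, for each $y_0\in Y_0$ and each $\cL\in\cA$ \emph{on the homotopy fibre} $\hofib_{y_0}(\norm{f_\bullet})$, that $f_0^{-1}(y_0)\to\hofib_{y_0}(\norm{f_\bullet})$ is an $\cL$-homology isomorphism. You instead take $\cL$ on $\norm{Y_\bullet}$; its pullback to any fibre is constant, so at best this addresses case~(i). More seriously, the spectral sequence you sketch converges to $H_*(\norm{X_\bullet};\norm{f_\bullet}^*\cL)$, not to the homology of $\hofib_{y_0}(\norm{f_\bullet})$, and extracting the latter from the former would require comparing with the Serre spectral sequence of the homotopy fibration---which presupposes exactly the fibre identification you are trying to establish. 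The Mayer--Vietoris/skeletal-induction alternative has the same defect: $\cA$-equivalences do not satisfy a cube theorem or a Dold--Thom gluing principle, so knowing that fibre-transport maps are $\cA$-equivalences does not let you propagate an ``$\cA$-quasifibration'' property across a pushout of base pieces. The difficulty you flag as ``the principal technical obstacle'' is not a technicality; it is the theorem.

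The paper's argument is organised precisely to avoid ever having to compute the homology of an unknown homotopy fibre. It first treats the case where $Y_\bullet$ is a simplicial set with $\norm{Y_\bullet}\simeq *$ (Lemma~\ref{groupcompletionproof-lemma1}): then $\hofib_{y_0}(\norm{f_\bullet})\simeq\norm{X_\bullet}$, so any $\cL\in\cA$ on the fibre \emph{is} a coefficient system on the total space, and now the skeletal spectral sequence of Section~\ref{subsec:spectralsequence} does compute the right thing. The general simplicial case is reduced to this one by discretisation (Lemma~\ref{groupcompletionproof-discretizationlemma}), descent along genuinely homotopy-cartesian maps (Lemma~\ref{groupcompletionproof-lemma3}), and a path-space resolution making the base contractible (Lemma~\ref{lem:groupcompletion-resolution}); the semi-simplicial case then follows via $E$. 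These reductions are the substance of the proof, not packaging: they replace the inaccessible homotopy fibre by the accessible total space before any spectral sequence is run.
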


The presentation of McDuff--Segal omits many details, to say the least. A more detailed exposition of the proof, with some imprecisions fixed, can be found in \cite{MilPal}. These proofs involve some fairly complicated point-set topology. There are proofs of an analogous result in the context of bi-simplicial sets by Jardine \cites{Jardine, Goerss-Jardine},  Moerdijk \cite{Moerdijk}, and Pitsch--Scherer \cite{PitschScherer}. These proofs use heavy machinery from simplicial homotopy theory (either model structures on the category of bi-simplicial sets, or (unpublished) results for manipulating homotopy colimits). The proof we shall give is essentially that of McDuff--Segal, but our argument replaces the point-set topology considerations with simplicial arguments. 

\subsection{Proof of Theorem \ref{thm:Acart}}

The main portion of the proof of Theorem \ref{thm:Acart} will be to prove the following version for simplicial spaces; the last step is the generalization to semi-simplicial spaces. We shall say that a map $f_\bullet:X_\bullet \to Y_\bullet$ of simplicial spaces is $\cA$-cartesian if the underlying map of semi-simplicial spaces has this property.

\begin{proposition}\label{prop:groupcompletion-simplicialanalog}
Let $f_\bullet: X_\bullet \to Y_\bullet$ be an $\cA$-cartesian map of simplicial spaces. Then the diagram \eqref{eq:good} is $\cA$-cartesian. 
\end{proposition}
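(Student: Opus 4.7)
The plan is to deduce the proposition from the stronger statement that $\norm{f_\bullet}$ is an \emph{$\cA$-quasifibration}: for each $y \in \norm{Y_\bullet}$, the inclusion $\norm{f_\bullet}^{-1}(y) \hookrightarrow \hofib_y \norm{f_\bullet}$ is an $\cA$-equivalence. After functorially replacing each $f_p$ by a fibration (which preserves the simplicial structure and the $\cA$-cartesianness hypothesis), the simplicial map $\norm{f_\bullet}$ is dimension-preserving, so $\norm{f_\bullet}^{-1}(y_0) = f_0^{-1}(y_0)$ for $y_0 \in Y_0$. Combined with the quasifibration property of $\norm{f_\bullet}$ and the fibration property of $f_0$, this gives the $\cA$-cartesianness of \eqref{eq:good}.

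The main technical ingredient I would establish first is an $\cA$-version of the Dold--Thom criterion: if $p: E \to B$ restricts to $\cA$-quasifibrations over each open set in a cover $B = U_0 \cup U_1$ and over $U_0 \cap U_1$, with compatible fibre inclusions, then $p$ itself is an $\cA$-quasifibration. This may be proven via a Mayer--Vietoris comparison of the spectral sequences for $\cL$-homology (for $\cL$ any local system in $\cA$) of the kind discussed in Section~\ref{subsec:spectralsequence}. The argument is a straightforward adaptation of the classical Dold--Thom proof, with the hypothesis $\cL \in \cA$ ensuring that monodromy contributions can be controlled.

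The $\cA$-quasifibration property of $\norm{f_\bullet}$ is then established by induction on $n$, proving that $p_n = \norm{f_\bullet}|_{\norm{X_\bullet}^{(n)}} : \norm{X_\bullet}^{(n)} \to \norm{Y_\bullet}^{(n)}$ is an $\cA$-quasifibration. The base case $n=0$ holds because $p_0 = f_0$ is a fibration. For the inductive step, I would cover $\norm{Y_\bullet}^{(n+1)}$ by the open sets $U_0^Y$ and $U_1^Y$ from Lemma~\ref{lem:inclusion-skeleta}. Over $U_0^Y$ the $\cA$-quasifibration property follows from the inductive hypothesis and a fibre-preserving deformation retract onto $\norm{Y_\bullet}^{(n)}$; over $U_1^Y \simeq Y_{n+1} \times \inter{\Delta^{n+1}}$ the restriction is a trivial quasifibration with fibres those of $f_{n+1}$; and over the intersection $\simeq Y_{n+1} \times \partial \Delta^{n+1}$ the fibres compare via the face maps.

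The main obstacle is verifying the compatibility hypothesis of the $\cA$-Dold--Thom criterion: one must check that the fibre $f_{n+1}^{-1}(y)$ over $y \in Y_{n+1}$ is $\cA$-equivalent to the fibre of $p_n$ over the image of $(y, v)$ for $v$ a vertex of $\Delta^{n+1}$, via a comparison map compatible with the monodromy along the corresponding path in $\norm{Y_\bullet}$. This is precisely the content of the $\cA$-cartesianness hypothesis applied to the squares \eqref{diag:hocartesian.morpism}, chained along the face maps $d_i$. The simplicial structure enters critically here: the degeneracies $s_i$ provide canonical homotopies ensuring that the abstract fibre-transport furnished by the hypothesis agrees with the geometric monodromy along the corresponding paths in $\norm{Y_\bullet}$, allowing a coherent application of the gluing criterion at every stage.
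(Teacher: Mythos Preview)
Your approach is essentially the original McDuff--Segal argument: factor through fibrations, then prove that $\norm{f_\bullet}$ is an ``$\cA$-quasifibration'' by skeletal induction using a Dold--Thom-style patching criterion. The paper's proof is deliberately different: it introduces the auxiliary notion of a \emph{basic} simplicial space (one for which the conclusion holds for every $\cA$-cartesian map into it), and then shows every simplicial space is basic through a chain of reductions---first contractible simplicial sets via a spectral sequence (Lemma~\ref{groupcompletionproof-lemma1}), then arbitrary simplicial spaces via a discretisation step using $\Sing_\bullet$ and the diagonal (Lemma~\ref{groupcompletionproof-discretizationlemma}), a descent lemma along homotopy cartesian maps (Lemma~\ref{groupcompletionproof-lemma3}), and a path-space-style resolution (Lemma~\ref{lem:groupcompletion-resolution}). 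The paper explicitly notes that it has chosen this route precisely to avoid the point-set topology of the McDuff--Segal argument.

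Your sketch is on the right track and your identification of the ``main obstacle'' is accurate, but the two places where you wave your hands are exactly where the difficulty lies. First, the ``$\cA$-Dold--Thom criterion'' is not a triviality: the classical criterion is not simply that the map be a quasifibration over $U_0$, $U_1$, $U_0\cap U_1$, but requires fibrewise deformations inducing weak equivalences on fibres (Dold--Thom's Hilfssatz~2.10); formulating and proving the correct $\cA$-analogue, and checking that the radial deformation near $\norm{Y_\bullet}^{(n)}$ lifts in a way that induces $\cA$-equivalences on fibres, is real work. Second, your appeal to degeneracies to reconcile ``abstract fibre transport'' with ``geometric monodromy'' is suggestive but not an argument; in the fat realisation the degeneracies are not used to form the quotient, so making this precise requires care. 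None of this is wrong, but a referee would not accept the sketch as written---the detailed version is essentially the content of Miller--Palmer \cite{MilPal}.
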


The proof will be sequence of lemmas, each of which extends the class of base spaces $Y_\bullet$ for which the conclusion of Proposition \ref{prop:groupcompletion-simplicialanalog} holds. To this end, let us say that a simplicial space $Y_\bullet$ is \emph{basic} if for every  $\cA$-cartesian map of simplicial spaces $f_\bullet: X_\bullet \to Y_\bullet$ the diagram \eqref{eq:good} is $\cA$-cartesian. Given this definition, the statement of Proposition \ref{prop:groupcompletion-simplicialanalog} is that every simplicial space is basic.

\begin{lem}\label{groupcompletionproof-lemma1}
If $Y_\bullet$ is a simplicial set with contractible geometric realisation then it is basic. 
\end{lem}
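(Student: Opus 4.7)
The plan is to reduce the lemma to the statement that, for every $y_0 \in Y_0$ and every local coefficient system $\cL \in \cA$ on $\norm{X_\bullet}$, the inclusion $\iota: F := f_0^{-1}(y_0) \hookrightarrow \norm{X_\bullet}$ induces an isomorphism
\[
\iota_\ast: H_\ast\bigl(F;\iota^\ast\cL\bigr) \stackrel{\cong}{\lra} H_\ast\bigl(\norm{X_\bullet};\cL\bigr).
\]
This reduction uses that $Y_0$ is discrete, so $\hofib_x(f_0) \simeq f_0^{-1}(f_0(x))$, and that $\norm{Y_\bullet}$ is contractible, so $\hofib_{f_0(x)}(\norm{f_\bullet}) \simeq \norm{X_\bullet}$; under these identifications the comparison map becomes $\iota$.

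I would prove this isomorphism via the skeletal spectral sequence of Section \ref{subsec:spectralsequence}. Discreteness of $Y_q$ gives $X_q = \bigsqcup_{y\in Y_q} f_q^{-1}(y)$ and hence
\[
E^1_{p,q} = \bigoplus_{y\in Y_q} H_p\bigl(f_q^{-1}(y);\,\cL_q|_{f_q^{-1}(y)}\bigr) \Longrightarrow H_{p+q}\bigl(\norm{X_\bullet};\cL\bigr).
\]
The $\cA$-cartesian hypothesis, applied to each face-map square of $f_\bullet$ over the discrete pair $Y_q \to Y_{q-1}$, says precisely that every restriction $d_i: f_q^{-1}(y) \to f_{q-1}^{-1}(d_i y)$ is an $\cA$-equivalence (each of the classes (i)--(iii) is closed under restriction, so the restricted system on each fibre is again in $\cA$); together with the monodromy maps $\phi_i$ of Section \ref{subsec:spectralsequence} this makes every summand of $d^1$ an isomorphism. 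I would then identify $(E^1_{p,\ast},d^1)$ with the non-normalised cellular chain complex of $\norm{Y_\bullet}$ with coefficients in a local system $\cM_p$ whose stalk over $y\in Y_0$ is $H_p(f_0^{-1}(y);\cL_0|_{f_0^{-1}(y)})$, and whose parallel transport along $\sigma\in Y_1$ is furnished by the zig-zag of $\cA$-equivalences $f_0^{-1}(d_1\sigma) \xleftarrow{d_1} f_1^{-1}(\sigma) \xrightarrow{d_0} f_0^{-1}(d_0\sigma)$. Since $\norm{Y_\bullet}$ is contractible, hence simply connected, $\cM_p$ is forced to be the constant system with value $H_p(F;\iota^\ast\cL)$; consequently $E^2_{p,q}$ vanishes for $q>0$ and equals $H_p(F;\iota^\ast\cL)$ for $q=0$, and the spectral sequence collapses. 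The edge homomorphism $H_p(F;\iota^\ast\cL) \hookrightarrow E^1_{p,0} \twoheadrightarrow E^\infty_{p,0} \hookrightarrow H_p(\norm{X_\bullet};\cL)$ is then $\iota_\ast$ by construction (as $F$ is the $y_0$-summand of $X_0$, and any summand maps isomorphically onto $H_0(\norm{Y_\bullet};\cM_p)$), finishing the argument.

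The main obstacle lies in identifying $(E^1_{p,\ast},d^1)$ with the cellular chain complex of $\norm{Y_\bullet}$ in coefficients $\cM_p$: one must check that the zig-zag transport is coherently defined, that it is compatible with the monodromy maps $\phi_i$, and that the alternating sum of face-map actions matches the cellular boundary once the summands $H_p(f_q^{-1}(y);\cL_q|_{f_q^{-1}(y)})$ have been canonically identified with $H_p(F;\iota^\ast\cL)$. The simple connectedness of $\norm{Y_\bullet}$ is precisely what forces $\cM_p$ to be constant and allows the three classes (i)--(iii) of $\cA$ to be handled uniformly.
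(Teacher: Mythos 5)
Your proposal is correct and follows essentially the same route as the paper: reduce to showing the inclusion of a fibre into $\norm{X_\bullet}$ is a $\cL$-homology isomorphism, run the skeletal spectral sequence, use discreteness of $Y_q$ to split the $E^1$-page over simplices of $Y_\bullet$, use the $\cA$-cartesian hypothesis to identify $(E^1,d^1)$ with cellular chains of $\norm{Y_\bullet}$ in a local system, and invoke contractibility of $\norm{Y_\bullet}$ to collapse the $E^2$-page. The only cosmetic difference is the endgame: the paper finishes by comparing with the spectral sequence for the constant map from $\Delta^0_\bullet$, while you extract the same conclusion by tracking the edge homomorphism directly; both are equivalent (and your extra appeal to simple connectedness to make $\cM_p$ constant is harmless but not needed, since $H_*(\norm{Y_\bullet};\cM_p)$ already reduces to a single stalk by contractibility alone).
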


\begin{proof}
The proof only uses the semi-simplicial structure. Let $y \in Y_0$ be a basepoint. Since $\norm{Y_\bullet}$ is contractible, the natural map $\eta:\hofib_y (\norm{f_\bullet}) \to \norm{X_\bullet}$ is a weak equivalence. Hence any coefficient system $\cL'$ on $\hofib_y (\norm{f_\bullet})$ is of the form $\eta^* \cL$ for a coefficient system on $\norm{X_\bullet}$, and if $\cL'$ lies in the class $\cA$, then so does $\cL$.
Therefore, we have to prove that for each point $y \in Y_0$, the inclusion map $j:f_0^{-1}(y) \to \norm{X_\bullet}$ induces an isomorphism $H_* (f_0^{-1}(y);j^*\cL) \to H_* (\norm{X_\bullet};\cL)$. 

The spectral sequence of the semi-simplicial space $X_\bullet$ with coefficients in $\cL$ discussed in Section \ref{subsec:spectralsequence} takes the form
\[
 E^1_{p,q} = H_q (X_p;\cL_p) \Rightarrow H_{p+q}(\norm{X_\bullet};\cL).
\]
Since $Y_p$ is discrete, we can write the $E^1$-term as
\[
  H_q (X_p;\cL_p) = \bigoplus_{s \in Y_p} H_q (f^{-1}(s);\cL_p|_{f^{-1}(s)}).
\]
To simplify notation, we write $H_q (f^{-1}(s);\cL_p):= H_q (f^{-1}(s);\cL_p|_{f^{-1}(s)})$. Because the map $f_\bullet$ is $\cA$-cartesian, the map $H_q (f^{-1}(s);\cL_p) \to H_q (f^{-1}(d_i s);\cL_{p-1})$ induced by the face map $d_i$ is an isomorphism. Hence $s \mapsto H_q (f^{-1} (s);\cL_p)$ is a locally constant coefficient system $H_q (f;\cL)$ on the simplicial set $Y_\bullet$. Hence $E^2_{p,q}= H_p (\norm{Y_\bullet}; H_q (f;\cL))$. Because $\norm{Y_\bullet}$ is contractible, it follows that $E^2_{p,q}=0$ for $p>0$. If $y \in Y_0$ is a basepoint, the induced map $\Delta^0_\bullet \to Y_\bullet$ of simplicial sets gives a comparison diagram
\[
 \xymatrix{
 f^{-1}(s) \ar[r] \ar[d] & X_\bullet \ar[d]^{f_\bullet}\\
 \Delta^0_\bullet \ar[r]^{y} & Y_\bullet.
 }
\]
It induces an isomorphism on the $E^2$-term of the spectral sequence, and therefore $f^{-1}(y) \to \norm{X_\bullet}$ induces an isomorphism in homology with coefficients in $\cL$, as claimed.
\end{proof}

The next step is a discretisation argument. For a simplicial space $Y_\bullet$, we consider the bi-simplicial set $(p,q)\mapsto \Sing_q Y_p$ and the associated diagonal simplicial set $\delta Y_p:=\Sing_p Y_p$. By Theorem \ref{thm:eilenberg-zilber}, Lemma \ref{lem:RealSing} and Theorem \ref{thm:levelwiseequivalence}, the maps
\[
\norm{\delta Y_\bullet} \overset{D}\lra \norm{\Sing_\bullet Y_\bullet} \lra \norm{Y_\bullet}
\]
are weak equivalences.

\begin{lem}\label{groupcompletionproof-discretizationlemma}
If $Y_\bullet$ is a simplicial space such that $\delta Y_\bullet$ is basic, then $Y_\bullet$ is basic.
\end{lem}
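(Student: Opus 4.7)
My plan is to apply the hypothesis that $\delta Y_\bullet$ is basic to the diagonal simplicial-set map $\delta f_\bullet : \delta X_\bullet \to \delta Y_\bullet$ induced by $f_\bullet$, where $\delta X_\bullet$ is the diagonal of the bi-simplicial set $(p,q)\mapsto \Sing_q X_p$, and then transport the resulting $\cA$-cartesian $0$-square back to $X_\bullet, Y_\bullet$ through the weak equivalences $\norm{\delta X_\bullet}\simeq\norm{X_\bullet}$ and $\norm{\delta Y_\bullet}\simeq\norm{Y_\bullet}$ discussed in the passage just before the lemma.

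The central substep is to verify that $\delta f_\bullet$ is $\cA$-cartesian. Since every level of $\delta X_\bullet$ and $\delta Y_\bullet$ is discrete and $\cA$-equivalences of discrete sets are bijections, this amounts to showing that each face-map square is literally Cartesian as a square of sets. Using the bi-simplicial factorization $d^\delta_i = (d^X_i)_* \circ (d^i)^*$ (restriction along the coface $d^i : \Delta^{p-1} \hookrightarrow \Delta^p$ followed by postcomposition with $d^X_i : X_p \to X_{p-1}$), the square in question decomposes into two. After replacing $f_\bullet$ by a levelwise Serre-fibrant model (which preserves $\cA$-cartesianness and changes none of the relevant homotopy types), the restriction sub-square is Cartesian by naturality, and the postcomposition sub-square is addressed by a section-lifting analysis combined with the $\cA$-cartesianness of $f_\bullet$ at $(p,i)$.

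Granting this, the basic property of $\delta Y_\bullet$ applied to $\delta f_\bullet$ gives the $\cA$-cartesianness of
\[
\xymatrix{
\delta X_0 \ar[r] \ar[d] & \norm{\delta X_\bullet} \ar[d] \\
\delta Y_0 \ar[r] & \norm{\delta Y_\bullet}.
}
\]
The right-hand column is immediately converted to $\norm{X_\bullet}, \norm{Y_\bullet}$ via the cited weak equivalences. For the left-hand column the only discrepancy is the discrete topology on $\delta X_0 = X_0^\delta$ versus the original topology on $X_0$; the required homotopy-fiber comparison over each point $y \in Y_0 = \delta Y_0$ is finished by invoking the $\cA$-cartesianness of $f_\bullet$ at levels $(1,0)$ and $(1,1)$ together with Theorem~\ref{lem:simplicial-hocartesianness} applied to the constant simplicial space on $f_0 : X_0 \to Y_0$, which identifies the relevant discrete-topology fibers of $\delta f_0$ with the topological homotopy fibers of $f_0$ in an $\cA$-sense.

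\textbf{The hardest part} will be Step~1: extracting the strong condition that $\delta f_\bullet$ is Cartesian on the nose as a map of simplicial sets from the a priori weaker $\cA$-cartesianness of $f_\bullet$. The reduction to Serre fibrations is essential, but the analysis of sets of continuous sections $\Delta^{p-1}\to X_p$ in the postcomposition sub-square is delicate, and is the core technical content of the proof. The subsequent transfer in Step~3 is subtle in appearance but routine once one commits to fiber-wise comparison.
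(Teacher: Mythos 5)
Your central step---showing that $\delta f_\bullet : \delta X_\bullet \to \delta Y_\bullet$ is $\cA$-cartesian when $\delta X_p := \Sing_p X_p$---is a false claim, not merely a hard one. As you observe, in a levelwise discrete situation $\cA$-cartesianness of a face-map square amounts to literal Cartesianness of sets; but the ``restriction sub-square''
\[
\xymatrix{
\Sing_p X_p \ar[r]^{(d^i)^*} \ar[d] & \Sing_{p-1} X_p \ar[d]\\
\Sing_p Y_p \ar[r]^{(d^i)^*} & \Sing_{p-1} Y_p
}
\]
is \emph{not} Cartesian, even when $f_p$ is a Serre fibration. The induced map on fibers over $\sigma \in \Sing_p Y_p$ is the restriction from the set of lifts of $\sigma$ along $f_p$ to the set of lifts of $\sigma\circ d^i$; this is surjective (by the lifting property) but has no reason whatsoever to be injective. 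Concretely, take $Y_\bullet = *_\bullet$ and let $X_\bullet$ be the constant simplicial space on $X = [0,1]$: then $f_\bullet$ is trivially $\cA$-cartesian and every $f_p$ is already a Serre fibration, but the face map $d_1^\delta : \Sing_1 X \to \Sing_0 X = X$ is evaluation at a vertex, which is far from injective, so the $(p,i)=(1,1)$ face-map square is not Cartesian and $\delta f_\bullet$ is not $\cA$-cartesian. No ``section-lifting analysis'' can rescue the step, because the statement you need is false.

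The paper's proof avoids this problem by not discretizing $X_\bullet$ at all. Instead of the bi-simplicial \emph{set} $\Sing_q X_p$, it forms the bi-simplicial \emph{space} $X_{p,q} := \coprod_{\sigma\in\Sing_q Y_p}\lift(\sigma, f_p)$, where $\lift(\sigma, f_p)$ is the topological space (compact-open topology) of lifts $h : \Delta^q \to X_p$ of $\sigma$. The fibers over the discrete $\Sing_q Y_p$ are mapping spaces rather than discrete sets, and when $f_p$ is a fibration the $q$-direction face maps restrict to weak equivalences (in fact trivial fibrations) between these mapping spaces---exactly the assertion that fails at the level of $\pi_0$ in your approach. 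The evaluation map $u_p : \norm{X_{p,\bullet}} \to X_p$ is then shown to be a weak equivalence, giving $\norm{\delta X_\bullet} \simeq \norm{X_\bullet}$ with this richer $\delta X_\bullet$, and $\cA$-cartesianness of $f_{\bullet,\bullet}$ in both directions is inherited from $\cA$-cartesianness of $f_\bullet$ together with fibrancy. The use of lift \emph{spaces} rather than lift \emph{sets} is the essential idea your proposal is missing: it converts the failure of strict Cartesianness into a harmless weak equivalence.
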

\begin{proof}
The proof uses the simplicial structure in an essential way. As in the second proof of Lemma \ref{lem:simplicial-hocartesianness} we may assume that $f_p : X_p \to Y_p$ is a fibration for each $p$. Let $Y_{p,q} := \Sing_q Y_p$, giving a bi-simplicial set $Y_{\bullet, \bullet}$, and define a bi-simplicial space $X_{\bullet,\bullet}$ and a map $f_{\bullet,\bullet} : X_{\bullet,\bullet} \to Y_{\bullet,\bullet}$ as follows. Let $X_{p,q} := \coprod_{\sigma \in Y_{p,q}} \lift (\sigma,f_p)$, where $\lift (\sigma,f_p)$ is the space of all maps $h: \Delta^q \to X_p$ with $f_p \circ h = \sigma$, equipped with the compact-open topology. The simplicial structure in the $p$ direction is given by $ h \mapsto d_i \circ h$ and in the $q$-direction by $h \mapsto h \circ d^j$ (similarly for the degeneracy maps). The evident maps $f_{p,q}:X_{p,q} \to Y_{p,q}$ are the components of a bi-simplicial map. Because $f_p$ is a fibration, the map $f_{p,q}^{-1} (\sigma) \to f_{p,q-1}^{-1}(d_i \sigma)$ is a weak equivalence, for each 
$q$ and $i$. Hence the simplicial map $f_{p,
\bullet}: X_{p,\bullet} \to Y_{p,\bullet}$ is homotopy cartesian.

Analogous to the evaluation map $\norm{Y_{p,\bullet}} \to Y_p$, let $u_p: \norm{X_{p,\bullet}} \to X_p$ be the map which sends $(h,t) \in X_{p,q} \times \Delta^q$ to $h(t)\in X_p$. These are the components of a map of simplicial spaces, and the diagram
\[
\xymatrix{
X_{p,0} \ar[d]^{f_{p,0}} \ar[r] & \norm{X_{p,\bullet}} \ar[r]^{u_p} \ar[d]^-{\norm{f_{p,\bullet}}}& X_p \ar[d]^{f_p}\\
Y_{p,0} \ar[r] & \norm{Y_{p,\bullet}} \ar[r] & Y_p
}
\]
commutes. As $f_{p,\bullet}$ is homotopy cartesian, it follows from Theorem \ref{lem:simplicial-hocartesianness} that the left-hand square is homotopy cartesian. The space $Y_{p,0}$ is $Y_p$ with the discrete topology, and $f_{p,0}^{-1} (y)= f_p^{-1}(y)$. Therefore, the outer rectangle is homotopy cartesian. Moreover, $Y_{p,0} \to \norm{Y_{p,\bullet}}$ is $0$-connected, so it follows that the right-hand square is homotopy cartesian as well. The bottom right-hand map is a weak equivalence by Lemma \ref{lem:RealSing}, so the map $u_p$ is also a weak equivalence.

So far, we set the stage for the following diagonal argument. Consider the commutative square
\[
\xymatrix{
X_{0,0} \ar[d]^{f_{0,0}} \ar[r] & \norm{\delta X_{\bullet}} \ar[d]^{\delta f_\bullet} \ar[r]^{\simeq}  & \norm{X_{\bullet,\bullet}} \ar[d]^{\norm{f_{\bullet,\bullet}}} \ar[r]_{\norm{u_\bullet}}^{\simeq} & \norm{X_\bullet} \ar[d]^{\norm{f_\bullet}}\\
Y_{0,0} \ar[r] & \norm{\delta Y_{\bullet}}  \ar[r]^{\simeq} & \norm{Y_{\bullet,\bullet}} \ar[r]^{\simeq} & \norm{Y_\bullet},
}
\]
where the weak equivalences in the middle come from Theorem \ref{thm:eilenberg-zilber}. 

Since $f_p$ is a fibration, and the original map $f_\bullet$ was $\cA$-cartesian, it follows that $f_{\bullet,\bullet}$ is $\cA$-cartesian (in the obvious sense: we require that the diagrams in Definition \ref{defn:hocartesian.morpism} to be $\cA$-cartesian in both simplicial directions), and hence that $\delta f_\bullet$ is $\cA$-cartesian. By the hypothesis of the lemma, $\delta Y_\bullet$ is basic and so the left square is $\cA$-cartesian. 
Since the other horizontal maps are weak equivalences, it follows that the outer rectangle is $\cA$-cartesian, which concludes the proof.
\end{proof}

The next step is to show that the property of being basic descends along homotopy cartesian maps. 

\begin{lem}\label{groupcompletionproof-lemma3}
Let $h_\bullet:Z_\bullet \to Y_\bullet$ be a homotopy cartesian map of simplicial spaces and assume that $h_0$ is $0$-connected. If $Z_\bullet$ is basic then $Y_\bullet$ is basic.
\end{lem}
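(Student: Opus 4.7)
Given an $\cA$-cartesian map $f_\bullet : X_\bullet \to Y_\bullet$ of simplicial spaces, the plan is to pull it back along $h_\bullet$ to obtain a map $\tilde f_\bullet : W_\bullet \to Z_\bullet$ which is again $\cA$-cartesian and lives over the basic simplicial space $Z_\bullet$, and then to transfer the resulting $\cA$-cartesianness back to the original square. To make the pullback construction strictly simplicial, I would first use the functorial mapping-path-space factorisation (exactly as in the second proof of Theorem \ref{lem:simplicial-hocartesianness}) to replace $f_\bullet$ by a levelwise Hurewicz fibration $f'_\bullet : X'_\bullet \to Y_\bullet$, with $X_\bullet \to X'_\bullet$ a levelwise weak equivalence; then define $W_p := X'_p \times_{Y_p} Z_p$, which inherits a simplicial structure together with projections $\tilde f_\bullet : W_\bullet \to Z_\bullet$ and $\tilde h_\bullet : W_\bullet \to X'_\bullet$. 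Since each $f'_p$ is a fibration, each defining square is simultaneously a strict pullback and a homotopy pullback.

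A short fibre-computation shows that $\tilde f_\bullet$ is $\cA$-cartesian: on strict fibres the map $W_p \to W_{p-1}$ over a face map $d_i$ is identified with the corresponding face map on fibres of $f'_\bullet$, which is an $\cA$-equivalence by hypothesis. The identical argument, with the back face of the relevant cube now being a homotopy cartesian rather than $\cA$-cartesian square, shows that $\tilde h_\bullet$ is homotopy cartesian.

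Next, consider the commutative cube whose top face is the square for $\tilde f_\bullet$ at the level of $0$-simplices and geometric realisations, and whose bottom face is the corresponding square for $f_\bullet$ (which, after the harmless replacement of $X_\bullet$ by $X'_\bullet$, is exactly what we want to show is $\cA$-cartesian). The left face is the defining homotopy pullback square for $W_0$; the front and back faces are homotopy cartesian by Theorem \ref{lem:simplicial-hocartesianness} applied to $\tilde h_\bullet$ and $h_\bullet$; and the top face is $\cA$-cartesian because $Z_\bullet$ is basic. I would first deduce that the right-hand (realisation) face is also homotopy cartesian: for $x_0 \in X_0$, chasing the homotopy fibre over $x_0$ along the front, left, and back faces in turn identifies the fibre-comparison map for the right face with a weak equivalence, and $0$-connectivity of $X_0 \to \norm{X'_\bullet}$ propagates this to all points.

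Finally, given $b \in Y_0$, use $0$-connectivity of $h_0$ to choose $z \in Z_0$ with $h_0(z)$ in the path-component of $b$; then the fibre-comparison map for the bottom face over $b$ is, up to weak equivalence, the composite
\[
\hofib_b(f'_0) \;\xleftarrow{\simeq}\; \hofib_z(\tilde f_0) \;\xrightarrow{\cA}\; \hofib_z(\norm{\tilde f_\bullet}) \;\xrightarrow{\simeq}\; \hofib_b(\norm{f'_\bullet})
\]
obtained from the left face (hocartesian), the top face ($\cA$-cartesian), and the right face (hocartesian). This is an $\cA$-equivalence, so the bottom face is $\cA$-cartesian and $Y_\bullet$ is basic. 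The one technical point to handle carefully is arranging that $W_\bullet$ is an honest simplicial space whose defining squares are homotopy cartesian, which is precisely what the functorial fibrant replacement achieves; after that, the argument is bookkeeping with homotopy fibres in a cube.
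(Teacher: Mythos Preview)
Your proof is correct and follows the same route as the paper's: replace $f_\bullet$ by a levelwise fibration, form the levelwise pullback $W_\bullet$, verify that the resulting maps $W_\bullet \to Z_\bullet$ and $W_\bullet \to X'_\bullet$ are $\cA$-cartesian and homotopy cartesian respectively, apply Theorem~\ref{lem:simplicial-hocartesianness} to $h_\bullet$ and to $\tilde h_\bullet$ to obtain that the realised square is homotopy cartesian, and then compare homotopy fibres over a point $h_0(z)$ using that $Z_\bullet$ is basic and that $h_0$ is $0$-connected. The only difference is organisational: you package the final comparison as a cube, whereas the paper writes out the same sequence of homotopy-cartesian squares directly; the final square of homotopy fibres you implicitly use is exactly the one displayed at the end of the paper's proof.
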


\begin{proof}
The proof only uses the semi-simplicial structure. 
Let $f_\bullet:X_\bullet \to Y_\bullet$ be a $\cA$-cartesian map of simplicial spaces. As in the second proof of Lemma \ref{lem:simplicial-hocartesianness} we may assume that each $f_p$ is a fibration. We form the levelwise pullback
\begin{equation}\label{eqn1.groupcompletionproof-lemma3}
\xymatrix{
 W_p \ar[r]^{k_p} \ar[d]^{g_p} & X_p \ar[d]^{f_p} \\
 Z_p \ar[r]^{h_p} & Y_p,
 }
\end{equation}
and this diagram is homotopy cartesian, because $f_p$ is a fibration. The map $g_\bullet$ is $\cA$-cartesian. To see this, let $z \in Z_p$ be a point and consider the commutative diagram
\[
 \xymatrix{
\hofib_z (g_p) \ar[d]^{\simeq} \ar[r]& \hofib_{d_i z}(g_{p-1}) \ar[d]^{\simeq}\\
 \hofib_{h_p(z)}(f_p) \ar[r] & \hofib_{d_i h_p(z)} (f_{p-1})
 }
\]
and use that $\cA$-equivalences satisfy the $2$-out-of-$3$ property. A similar argument (using also Remark \ref{remark.hocartesianness-symmetric}) shows that $k_\bullet$ is homotopy cartesian. The square
\[
 \xymatrix{
 \norm{W_\bullet} \ar[r]^{\norm{k_\bullet}} \ar[d]^{\norm{g_\bullet}} & \norm{X_\bullet} \ar[d]^{\norm{f_\bullet}} \\
 \norm{Z_\bullet} \ar[r]^{\norm{h_\bullet}} & \norm{Y_\bullet}
 }
\]
is homotopy cartesian. This follows by applying Theorem \ref{lem:simplicial-hocartesianness} to both $h_\bullet$ and $k_\bullet$, using that \eqref{eqn1.groupcompletionproof-lemma3} is homotopy cartesian for $p=0$ and using Remark \ref{remark.hocartesianness-symmetric}. Since \eqref{eqn1.groupcompletionproof-lemma3} for $p=0$ is homotopy cartesian, comparing homotopy fibres gives a commutative square
\[
 \xymatrix{
\ar[d] \hofib_{z} (g_0) \ar[r]^{\simeq} &\hofib_{h_0 (z)} (f_0) \ar[d] \\
 \hofib_{\iota(z)} (\norm{g_\bullet}) \ar[r]^{\simeq} & \hofib_{\iota(z)} (\norm{f_\bullet})
 }
\]
in which the horizontal maps are weak equivalences. Since $g_\bullet : W_\bullet \to Z_\bullet$ is $\cA$-cartesian, and by assumption $Z_\bullet$ is basic, it follows that the left vertical map is an $\cA$-equivalence. Therefore, the right vertical map is also an $\cA$-equivalence. This holds for any $z \in Z_0$, but the map $h_0$ is $0$-connected, which finishes the proof.
\end{proof}

The next lemma provides an appropriate resolution of a simplicial \emph{set} by a contractible simplicial space.

\begin{lem}\label{lem:groupcompletion-resolution}
Let $Y_\bullet$ be a $0$-connected simplicial set. Then there is a simplicial space $QY_\bullet$ with $\norm{QY_\bullet}\simeq *$ and a homotopy cartesian morphism $f_\bullet:QY_\bullet \to Y_\bullet$, such that $f_0$ is $0$-connected.
\end{lem}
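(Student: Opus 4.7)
The plan is to build a path-space resolution of $Y_\bullet$. Choose a basepoint $y_0 \in Y_0$ (possible since $Y_\bullet$ is $0$-connected, hence non-empty), and let $P := P_{y_0}\norm{Y_\bullet}$ be the space of continuous paths in $\norm{Y_\bullet}$ starting at $y_0$, equipped with the endpoint evaluation $\mathrm{ev}_1 : P \to \norm{Y_\bullet}$; this is a Hurewicz fibration with fibre $\Omega\norm{Y_\bullet}$ and contractible total space. I would define
\[
QY_p := Y_p \times_{\map(\Delta^p, \norm{Y_\bullet})} \map(\Delta^p, P),
\]
using that each $\sigma \in Y_p$ determines its characteristic map $\bar\sigma : \Delta^p \to \norm{Y_\bullet}$. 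A point is a pair $(\sigma, \tilde\sigma)$ with $\tilde\sigma : \Delta^p \to P$ a lift of $\bar\sigma$. The simplicial structure is inherited from the pullback: for $\alpha : [p] \to [q]$, set $\alpha^*(\sigma, \tilde\sigma) = (\alpha^*\sigma, \tilde\sigma \circ \alpha_*)$; the simplicial identities hold formally, and the projection $f_\bullet : QY_\bullet \to Y_\bullet$, $(\sigma, \tilde\sigma) \mapsto \sigma$, is a simplicial map whose components $f_p$ are fibrations since $Y_p$ is discrete.

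Next I would verify that $f_\bullet$ is homotopy cartesian and $f_0$ is $0$-connected. The fibre of $f_p$ over $\sigma \in Y_p$ is the space of sections of the fibration $\bar\sigma^*P \to \Delta^p$; as $\Delta^p$ is contractible this is weakly equivalent to the fibre $\Omega\norm{Y_\bullet}$. Any face map $d_i$ acts on fibres by restriction along the cofibration $d^i : \Delta^{p-1} \hookrightarrow \Delta^p$, which is a weak equivalence because $\Delta^p$ deformation retracts to $\Delta^{p-1}$. Thus each face-map square has a weak equivalence on vertical homotopy fibres, so is homotopy cartesian. For $f_0$, the fibre over $\sigma_0 \in Y_0$ is $P_{y_0, \sigma_0}\norm{Y_\bullet}$, non-empty by path-connectivity of $\norm{Y_\bullet}$, so $f_0$ is $0$-connected.

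Finally, for contractibility of $\norm{QY_\bullet}$, I would exhibit a comparison map to the contractible space $P$. Define
\[
\Phi : \norm{QY_\bullet} \lra P, \qquad [(\sigma, \tilde\sigma), t] \longmapsto \tilde\sigma(t),
\]
which is well-defined on the realisation because $\tilde\sigma \circ \alpha_* = \alpha^*\tilde\sigma$. By construction $\mathrm{ev}_1 \circ \Phi = \norm{f_\bullet}$, so $\Phi$ covers the identity on $\norm{Y_\bullet}$. The right-hand map is a Hurewicz fibration with contractible total space; the left-hand map $\norm{f_\bullet}$ is a quasifibration by the Dold--Thom argument from the second proof of Lemma \ref{lem:simplicial-hocartesianness}, since each $f_p$ is a fibration. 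By Theorem \ref{lem:simplicial-hocartesianness} applied to the now-established homotopy cartesian $f_\bullet$, the fibre of $\norm{f_\bullet}$ over $y_0$ is identified with $\Omega\norm{Y_\bullet}$, and $\Phi$ restricts there to the identity. The five-lemma on the long exact sequences of these (quasi-)fibrations then forces $\Phi$ to be a weak equivalence, giving $\norm{QY_\bullet} \simeq P \simeq *$. The main obstacle is ensuring the fibre identifications line up cleanly so that the five-lemma applies, but this is packaged by the tools assembled in Section \ref{sec:homotopy-geoemtric-realization}.
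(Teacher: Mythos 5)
Your construction is essentially the one the paper uses. The paper's $QY_p$ consists of pairs $(\sigma, h)$ with $h\colon\Delta^{p+1}\to\norm{Y_\bullet}$ restricting to the characteristic map on the last face and sending $e_{p+1}$ to the basepoint, which (viewing $\Delta^{p+1}$ as the cone on $\Delta^p$ and reversing path direction) is naturally homeomorphic to your pullback $Y_p\times_{\map(\Delta^p,\norm{Y_\bullet})}\map(\Delta^p,P)$; the verification that $f_\bullet$ is homotopy cartesian and $f_0$ is $0$-connected is the same, and your $\Phi$ is the paper's map $g$ under this homeomorphism.

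The one place you diverge is the final step. You invoke the quasifibration LES and the five lemma, flagging the ``fibre identifications'' as the loose end; this is a genuine subtlety, because the \emph{actual} fibre of $\norm{f_\bullet}$ over the basepoint $[y_0]$ is not literally $\Omega\norm{Y_\bullet}$ (it contains points coming from higher simplices whose characteristic maps pass through $[y_0]$), so one must pass through the comparison with homotopy fibres and track the maps carefully. The paper sidesteps this by assembling a three-column diagram with $\Omega_y\norm{Y_\bullet}=f_0^{-1}(y)$ on the left, noting the left square is homotopy cartesian by Theorem~\ref{lem:simplicial-hocartesianness} and the outer rectangle is a strict pullback along the fibration $\mathrm{ev}_0$, and then applying the 2-out-of-3 property for homotopy cartesian squares (using $0$-connectivity of $\norm{Y_\bullet}$) to conclude the right square is homotopy cartesian, whence $g$ is a weak equivalence. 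This avoids ever naming the actual fibre of the quasifibration. Your argument can be completed along the lines you indicate, but the paper's 2-out-of-3 route is cleaner and uses only what Theorem~\ref{lem:simplicial-hocartesianness} already provides.
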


The same statement is true for semi-simplicial sets, with the same proof. 

\begin{proof}
Fix a vertex $y\in Y_0$. For each simplex $\sigma \in Y_p$, we let $\chi_\sigma: \Delta^p \to \norm{Y_\bullet}$ denote its characteristic map. Furthermore, we view $\Delta^p \subset \Delta^{p+1}$ as the last face, i.e. the face opposite to $e_{p+1}$. We let 
\[
QY_p:= \coprod_{\sigma \in Y_p} \{(\sigma,h)\vert h: \Delta^{p+1} \to \norm{Y_\bullet} ; \; h|_{\Delta^p}= \chi_\sigma; \; h(e_{p+1})= y \},
\] 
topologised as a subspace of $Y_p \times \norm{Y_\bullet}^{\Delta^p}$.
Define $d_i: QY_p \to QY_{p-1}$ by $d_i (\sigma,h):= (d_i \sigma, h \circ d^i)$ (and the degeneracy maps in an analogous way) and $f_p: QY_p \to Y_p$ by $f_p (\sigma,h):= \sigma$. Then $f_\bullet:QY_\bullet \to Y_\bullet$ is a map of simplicial spaces. 

This should be viewed as an analogue of the path fibration, and we now verify that indeed it has the characteristic properties of that construction. The maps $d_i:f_p^{-1}(\sigma)\to f_{p-1}^{-1}(\sigma)$ are homotopy equivalences, so that $f_\bullet$ is homotopy cartesian. It remains to be shown that $\norm{QY_\bullet}\simeq *$.

First observe that the fibre $f_0^{-1}(y)$ is the based loop space $\Omega_y \norm{Y_\bullet}$. 
Let $P_y \norm{Y_\bullet}$ denote the path space: the space of all paths in $\norm{Y_\bullet}$ with endpoint $y$. The map
\begin{align*}
g: \norm{QY_\bullet} &\lra P_y \norm{Y_\bullet}\\
 (\sigma,h,t) &\longmapsto \Bigl(s \mapsto h ((1-s)t,s) \Bigr)
\end{align*}
makes the diagram
\[
\xymatrix{
\Omega_y \norm{Y_\bullet} \ar[r] \ar[d] & \norm{QY_\bullet} \ar[r]^g \ar[d]^{\norm{f_\bullet}} & P_y \norm{Y_\bullet} \ar[d]^{\mathrm{ev}_0} \\
\{y\} \ar[r] & \norm{Y_\bullet} \ar@{=}[r] & \norm{Y_\bullet}
}
\]
commute, by inspection. Since $f_\bullet$ is homotopy cartesian, the left-hand square is homotopy cartesian. The outer rectangle is also hompotopy cartesian, as it is cartesian and $\mathrm{ev}_0$ is a fibration. Thus the map between vertical homotopy fibres over $y$ of the right-hand square is an equivalence: this holds for all $y$, so the right-hand square is homotopy cartesian, and hence $g$ is a weak equivalence. Thus $\norm{QY_\bullet} \simeq *$ as desired.
\end{proof}

The deduction of Proposition \ref{prop:groupcompletion-simplicialanalog} is fairly easy.

\begin{proof}[Proof of Proposition \ref{prop:groupcompletion-simplicialanalog}]
We have to show that every simplicial space $Y_\bullet$ is basic. It is no loss of generality to assume that $\norm{Y_\bullet}$ is $0$-connected. Using the construction from Lemma \ref{lem:groupcompletion-resolution}, we consider the simplicial set $\delta (Q(\delta Y))_\bullet$. This is contractible (by Lemma \ref{lem:groupcompletion-resolution} and Theorem \ref{thm:eilenberg-zilber}) so by Lemma \ref{groupcompletionproof-lemma1} $\delta (Q(\delta Y))_\bullet$ is basic. By Lemma \ref{groupcompletionproof-discretizationlemma}, it follows that $Q(\delta Y)_\bullet$ is basic. As the map $f_\bullet : Q(\delta Y)_\bullet \to \delta Y_\bullet$ provided by Lemma \ref{lem:groupcompletion-resolution} is homotopy cartesian and $f_0$ is 0-connected, it follows from Lemma \ref{groupcompletionproof-lemma3} that $\delta Y_\bullet$ is basic. Finally, using Lemma \ref{groupcompletionproof-discretizationlemma} again, it follows that $Y_\bullet$ is basic.
\end{proof}

\begin{proof}[Proof of Theorem \ref{thm:Acart}]
By Proposition \ref{prop:groupcompletion-simplicialanalog}, every simplicial space is basic. We will make use of the functor $E : \se \Top \to \si \Top$ which freely adds degeneracies. Let $f_\bullet: X_\bullet \to Y_\bullet$ be an $\cA$-cartesian map of semi-simplicial spaces, giving a map $Ef_\bullet : EX_\bullet \to EY_\bullet$ of simplicial spaces. It follows from the description of the simplices and face maps of $EY_\bullet$ that $Ef_\bullet$ is also $\cA$-cartesian. Consider the commutative diagram
\[
\xymatrix{
X_0 \ar[d]^{f_0} \ar[r] & \norm{X_\bullet} \ar[d]^{\norm{f_\bullet}} \ar[r] & \norm{EX_\bullet} \ar[d]^{\norm{Ef_\bullet}} \\
Y_0 \ar[r] & \norm{Y_\bullet}  \ar[r] & \norm{EY_\bullet}.
}
\]
The simplicial space $EY_\bullet$ is basic by Proposition \ref{prop:groupcompletion-simplicialanalog}, so as $EX_0=X_0$ and $EY_0=Y_0$ we have that the outer rectangle is $\cA$-cartesian. As the two rightmost horizontal maps are weak equivalences, by Lemma \ref{lem:AdjUnitsTop}, it follows that the left-hand square is $\cA$-cartesian, as claimed. 
\end{proof}

\subsection{Group-completion}

Let us describe the application of Theorem \ref{thm:Acart} to group-completion. Let $M$ be a (topological) monoid acting on the left on a space $X$ and on the right on a space $Y$. One may form the \emph{two-sided bar construction} $B_\bullet(Y, M, X)$, the semi-simplicial space having $p$-simplices $Y \times M^{p} \times X$, with face maps
\begin{align*}
d_0(y, m_1, \ldots, m_p, x) &= (y \cdot m_1, m_2, \ldots, m_p, x)\\
d_i(y, m_1, \ldots, m_p, x) &= (y, m_1, \ldots, m_{i-1}, m_i \cdot m_{i+1}, m_{i+2}, \ldots, m_p, x) \text{ for } 0 < i <p \\
d_p(y, m_1, \ldots, m_p, x) &= (y, m_1, m_2, \ldots, m_p \cdot x).
\end{align*}
Now let $Y=*$ and suppose that $M$ acts on $X$ by $\cA$-equivalences. Then the projection map $B_\bullet(*, M, X) \to B_\bullet(*, M, *)$ is $\cA$-cartesian, and so by Theorem \ref{thm:Acart} the square
\begin{equation}\label{eq:Acart}
\begin{gathered}
 \xymatrix{
 X \ar[r] \ar[d]^{f_0} & \norm{B_\bullet(*, M, X)} \ar[d]^{\norm{f_\bullet}} \\
 \{*\} \ar[r] & \norm{B_\bullet(*, M, *)} \ar@{=}[r]&  BM
 }
\end{gathered}
\end{equation}
is $\cA$-cartesian.

We apply this as follows. Suppose that the set of path-components of $M$ is countable and let $m_1, m_2, m_3, \ldots \in M$ be a sequence of points  with infinitely-many in each path component. We may form the homotopy colimit
$$M_\infty = \mathrm{hocolim} (M \overset{- \cdot m_1}\to M \overset{- \cdot m_2}\to M \overset{- \cdot m_3}\to \cdots)$$
over right multiplication in the monoid $M$ by the $m_i$; this has a residual left $M$-action. If the monoid $M$ is homotopy commutative, then $H_*(M;\bZ)$ has the structure of a commutative ring, and we can identify
$$H_*(M_\infty;\bZ) \cong \colim (H_*(M;\bZ) \overset{(- \cdot m_1)_*}\to H_*(M;\bZ) \overset{(- \cdot m_2)_*}\to H_*(M;\bZ) \overset{(- \cdot m_3)_*}\to \cdots)$$
with the localisation $H_*(M;\bZ)[\pi_0(M)^{-1}]$ of the ring $H_*(M)$ at the multiplicative subset $\pi_0(M) \subset H_0(M;\bZ)$. In particular, the map $m \cdot - : M_\infty \to M_\infty$ given by left multiplication by $m$ induces an isomorphism on homology. We may thus apply the above observation to the left action of $M$ on $M_\infty$. Now $B_\bullet(*, M, M)$ has an extra degeneracy (as in Lemma \ref{lem:ExtraDeg}), so $\norm{B_\bullet(*, M, M)} \simeq *$ and hence
$$\norm{B_\bullet(*, M, M_\infty)} \simeq \mathrm{hocolim} (\norm{B_\bullet(*, M, M)} \overset{- \cdot m_1}\to \norm{B_\bullet(*, M, M)} \to \cdots ) \simeq *.$$
The homology-cartesian square \eqref{eq:Acart} therefore gives a map
\begin{equation}\label{eq:GCT}
M_\infty \lra \hofib_*(\norm{B_\bullet(*, M, X)} \to \norm{B_\bullet(*, M, *)}) \simeq \Omega BM
\end{equation}
which is an integral homology equivalence; in particular
$$H_*(M;\bZ)[\pi_0(M)^{-1}] \cong H_*(\Omega BM ; \bZ).$$

\begin{rem}
In fact, the argument of \cite{RWGC} shows that in the situation above the monoid $M$ acts on $M_\infty$ by abelian homology equivalences, and so the map \eqref{eq:GCT} is an abelian homology equivalence, but the fundamental group of the target is abelian, so it follows that \eqref{eq:GCT} is in fact an acylic map.
\end{rem}

There is also a group-completion theorem for categories, rather than monoids: it can also be deduced immediately from Theorem \ref{thm:Acart}; we refer the reader to \cite[Section 7]{GMTW} for a formulation.

\section{Products of simplicial spaces}\label{sec:products}

Let $X_{\bullet,\bullet}$ be a bi-simplicial space and let $\delta (X_{\bullet,\bullet})$ be the diagonal simplicial space. To define the \emph{diagonal map} $D: \norm{\delta(X_{\bullet,\bullet})} \to \norm{X_{\bullet,\bullet}}$, take the diagonal map $d:\Delta^p \to \Delta^p \times \Delta^p$ and 
\[
(\id_{X_{p,p}} \times d):  X_{p,p} \times \Delta^p \lra X_{p,p} \times \Delta^p \times \Delta^p.
\]
This respects the equivalence relations used for the definition of the fat geometric realisation and so induces a map $D$ as indicated. 

\begin{thm}\label{thm:eilenberg-zilber}
The diagonal map $D$ is a weak equivalence.
\end{thm}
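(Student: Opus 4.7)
The plan is to prove this Eilenberg--Zilber type statement by a two-step reduction: first replace spaces by sets by applying $\Sing$ in a third direction, then for bi-simplicial sets compare to the thin realizations where the analogous statement is a classical homeomorphism.

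\emph{Step 1 (Sing reduction).} I would form the tri-simplicial set $T_{p,q,r} := \Sing_r X_{p,q}$. Its evaluation map $\norm{T_{p,q,\bullet}} \to X_{p,q}$ is a weak equivalence by Lemma~\ref{lem:RealSing}, and iterating the homeomorphism~\eqref{geometric-realization-bisemisimplicial} together with Theorem~\ref{thm:levelwiseequivalence} applied in the $p$- and $q$-directions yields weak equivalences $\norm{T} \xrightarrow{\sim} \norm{X_{\bullet,\bullet}}$ and $\norm{\delta_{12} T} \xrightarrow{\sim} \norm{\delta X_{\bullet,\bullet}}$, where $\delta_{12}$ takes the diagonal in the first two indices only; these are compatible with the two diagonal maps by naturality. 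Realizing $\norm{T}$ in the $r$-direction last, the induced map $\norm{\delta_{12} T} \to \norm{T}$ becomes the levelwise (in $r$) realization of the diagonal map $D$ for the bi-simplicial set $T_{\bullet,\bullet,r}$, so one further application of Theorem~\ref{thm:levelwiseequivalence} reduces the theorem to the case where $X_{\bullet,\bullet}$ is a bi-simplicial \emph{set}.

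\emph{Step 2 (bi-simplicial sets).} For a bi-simplicial set $Y_{\bullet,\bullet}$ I would fit $D$ into the commutative square
\[
\xymatrix{
\norm{\delta Y} \ar[r]^-{D} \ar[d] & \norm{Y_{\bullet,\bullet}} \ar[d] \\
\abs{\delta Y} \ar[r]^-{\overline D} & \abs{Y_{\bullet,\bullet}}
}
\]
where the vertical maps are the natural quotients and $\overline D$ is given by the same diagonal formula as $D$. The classical Eilenberg--Zilber theorem for bi-simplicial sets, which is elementary in the discrete setting because both $\abs{\delta Y}$ and $\abs{Y_{\bullet,\bullet}}$ may be described as the quotient of $\coprod_p Y_{p,p} \times \Delta^p$ by the same degeneracy and face relations of the diagonal, shows that $\overline D$ is a homeomorphism. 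The left vertical map is a weak equivalence by Lemma~\ref{lem:comparison-fat-to-thin-simplicialset}. For the right vertical map, I would use~\eqref{geometric-realization-bisemisimplicial} to write $\norm{Y_{\bullet,\bullet}} \simeq \norm{[p] \mapsto \norm{F Y_{p,\bullet}}}$, pass to $\norm{[p] \mapsto \abs{Y_{p,\bullet}}}$ via Lemma~\ref{lem:comparison-fat-to-thin-simplicialset} applied levelwise together with Theorem~\ref{thm:levelwiseequivalence}, and finally compare this fat realization in the $p$-direction with $\abs{Y_{\bullet,\bullet}} = \abs{[p] \mapsto \abs{Y_{p,\bullet}}}$.

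\emph{Main obstacle.} The hard part is concentrated in the very last arrow of step~2: showing that $\norm{Z_\bullet} \to \abs{Z_\bullet}$ is a weak equivalence for the simplicial space $Z_p := \abs{Y_{p,\bullet}}$. This is an instance of the fat-to-thin comparison for ``good'' simplicial spaces and is not directly supplied by the lemmas gathered so far; however, the combinatorial structure here is particularly clean because each degeneracy $\abs{Y_{p,\bullet}} \to \abs{Y_{p+1,\bullet}}$ is an inclusion of CW-subcomplexes. I expect to handle this by induction on the $p$-skeleton using the pushout presentation~\eqref{filtration-as-pushout} together with an explicit cellular deformation retraction of the ``extra'' degenerate cells, in the spirit of the proof of Lemma~\ref{lem:AdjUnits}.
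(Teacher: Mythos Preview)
Your Step~1 reduction via $\Sing$ is correct and is exactly what the paper does in the last paragraph of its proof. The divergence is in Step~2, where you and the paper take genuinely different routes for bi-simplicial sets.

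The paper proves the bi-simplicial-set case by an acyclic models argument carried out directly on the \emph{fat} realisations: it first shows that both $\norm{\Delta^{n,m}_{\bullet,\bullet}}$ and $\norm{\delta(\Delta^{n,m})_\bullet}$ are contractible (Lemma~\ref{eilenberg-zilber-proof1}), and then builds by induction on skeleta a natural homotopy inverse $F$ to $D$ together with natural homotopies $DF\simeq\id$ and $FD\simeq\id$ (Lemma~\ref{eilenberg-zilber-proof2}). This never touches the thin realisation at all. Your route instead reduces to the classical thin-realisation homeomorphism $\abs{\delta Y}\cong\abs{Y_{\bullet,\bullet}}$ and then has to supply the fat-to-thin comparison $\norm{Z_\bullet}\to\abs{Z_\bullet}$ for the simplicial \emph{space} $Z_p=\abs{Y_{p,\bullet}}$. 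The paper is aware of this alternative: just before Lemma~\ref{eilenberg-zilber-proof1} it remarks that one could derive the theorem from the classical homeomorphism together with \cite[Proposition A.1]{Segal}, but that ``it seems to be easier to give an argument from scratch.'' Your ``main obstacle'' is precisely that Proposition~A.1 (the fat-to-thin comparison for good simplicial spaces), and your sketch of how to handle it is plausible---$Z_\bullet$ is good because its degeneracies are CW-subcomplex inclusions---but you would be reproving a result the paper deliberately sidesteps. The trade-off is that your route recycles a standard theorem and keeps the bi-simplicial-set step short, while the paper's route is fully self-contained within the fat-realisation framework it has developed and avoids importing the goodness machinery.
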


This is false if one considers bi-semi-simplicial spaces instead: if $Y_\bullet$ is an arbitrary semi-simplicial space and $X_{\bullet,\bullet} = \nabla^0_\bullet \otimes Y_\bullet$, then $\norm{X_{\bullet,\bullet}} = \norm{Y_\bullet} $ and $\norm{\delta(X)_\bullet} = Y_0$. Let us note an application of Theorem \ref{thm:eilenberg-zilber}.

\begin{thm}\label{thm:products-of-simplicial-spaces}
Let $X_\bullet$ and $Y_\bullet$ be simplicial spaces. Then the map
\[
\norm{(X\times Y)_\bullet} \lra \norm{X_\bullet} \times \norm{Y_\bullet},
\]
induced by the two projection maps $(X\times Y)_\bullet \to X_\bullet$ and $(X \times Y)_\bullet \to Y_\bullet$, is a weak homotopy equivalence.
\end{thm}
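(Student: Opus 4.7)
The plan is to deduce this from Theorem \ref{thm:eilenberg-zilber} together with the homeomorphism $\norm{X_\bullet \otimes Y_\bullet} \cong \norm{X_\bullet} \times \norm{Y_\bullet}$ recorded in \eqref{geometric-realization-bisemisimplicial2}. Recall from the discussion of exterior and interior products that
\[
(X \times Y)_\bullet = \delta(X_\bullet \otimes Y_\bullet),
\]
so the bi-simplicial space $X_\bullet \otimes Y_\bullet$ has diagonal simplicial space equal to the interior (levelwise) product $(X \times Y)_\bullet$.

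First I would apply Theorem \ref{thm:eilenberg-zilber} to the bi-simplicial space $Z_{\bullet,\bullet} := X_\bullet \otimes Y_\bullet$, which yields a weak equivalence
\[
D : \norm{(X \times Y)_\bullet} = \norm{\delta Z_{\bullet,\bullet}} \stackrel{\sim}{\lra} \norm{Z_{\bullet,\bullet}}.
\]
Composing with the homeomorphism from \eqref{geometric-realization-bisemisimplicial2} gives a weak equivalence $\norm{(X \times Y)_\bullet} \to \norm{X_\bullet} \times \norm{Y_\bullet}$.

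It then remains to identify this composition with the map induced by the two projections $(X \times Y)_\bullet \to X_\bullet$ and $(X \times Y)_\bullet \to Y_\bullet$. This is a direct unwinding of definitions on representatives: a point $((x,y); t) \in (X_p \times Y_p) \times \Delta^p$ is sent by $D$ to $((x,y); t, t) \in X_p \times Y_p \times \Delta^p \times \Delta^p$, and the homeomorphism of \eqref{geometric-realization-bisemisimplicial2} sends this to $((x;t),(y;t)) \in \norm{X_\bullet} \times \norm{Y_\bullet}$, which is exactly the value of the projection-induced map. Hence the projection-induced map equals the weak equivalence produced above, and the result follows.

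I expect the only nontrivial step here to be the invocation of Theorem \ref{thm:eilenberg-zilber}; everything else is a formal matching of definitions. In particular, no further point-set topology is needed beyond what is already absorbed into working in compactly generated spaces, which is what makes \eqref{geometric-realization-bisemisimplicial2} a genuine homeomorphism.
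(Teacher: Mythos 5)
Your proposal is correct and takes essentially the same route as the paper's proof: both reduce the claim to Theorem \ref{thm:eilenberg-zilber} applied to the bi-simplicial space $X_\bullet \otimes Y_\bullet$ together with the homeomorphism \eqref{geometric-realization-bisemisimplicial2}, with the remaining step being the (straightforward) commutativity of the resulting square. The only difference is cosmetic: the paper leaves the commutativity unjustified, whereas you spell it out on representatives, which is a harmless elaboration.
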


\begin{proof}
The diagram
\[
\xymatrix{
\norm{(X\times Y)_\bullet} \ar@{=}[d] \ar[r] & \norm{X_\bullet} \times \norm{Y_\bullet} \\
\norm{\delta (X \otimes Y)_\bullet} \ar[r]^{\simeq} & \norm{X_\bullet \otimes Y_\bullet} \ar[u]^{\cong}
}
\]
commutes, and the indicated homeomorphism and weak equivalence are true by Theorem \ref{thm:eilenberg-zilber} and \eqref{geometric-realization-bisemisimplicial2}.
\end{proof}

Theorem \ref{thm:products-of-simplicial-spaces} is important when one applies Segal's theory of $\Gamma$-spaces to deloop spaces which arise as geometric realisations of simplicial spaces. This will be done in \cite{ERW17} and has been done at various places in the literature.

One could derive Theorem \ref{thm:eilenberg-zilber} from the classical result \cite[Theorem I.3.7]{GelMan} that for a bi-simplicial set, one has a homeomorphism $|\delta (X_{\bullet,\bullet})| \cong |X_{\bullet,\bullet}|$ and from \cite[Proposition A.1]{Segal}. However, it seems to be easier to give an argument from scratch. The main bulk of work for the proof of Theorem \ref{thm:eilenberg-zilber} is the proof for bi-simplicial \emph{sets}, and the proof of that case resembles in some sense the proof of classical Eilenberg--Zilber theorem in singular homology, using the method of acyclic models. The first step is to prove that the ``models'' are contractible.

\begin{lem}\label{eilenberg-zilber-proof1}
Let $\Delta^{n,m}_{\bullet,\bullet} := \Delta_\bullet^n \otimes \Delta_\bullet^m$ be the ``bi-simplicial $(n,m)$-simplex''. The spaces $\norm{\Delta^{n,m}_{\bullet,\bullet}}$ and $\norm{\delta (\Delta^{n,m})_{\bullet}}$ are contractible. In particular, Theorem \ref{thm:eilenberg-zilber} is true when $X_{\bullet,\bullet}=\Delta^{n,m}_{\bullet,\bullet}$.
\end{lem}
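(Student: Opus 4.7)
The plan has three pieces: establish each of the two contractibility statements and then note that the remaining assertion of Theorem \ref{thm:eilenberg-zilber} in this case is immediate.

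First I would handle $\norm{\Delta^{n,m}_{\bullet,\bullet}}$. Since $\Delta^{n,m}_{\bullet,\bullet} = \Delta^n_\bullet \otimes \Delta^m_\bullet$ is an exterior product of simplicial sets, the homeomorphism \eqref{geometric-realization-bisemisimplicial2} (which holds for the fat realisation of bi-simplicial objects as bi-semi-simplicial ones, since the compactly generated category allows products and quotients to commute) gives
\[
\norm{\Delta^{n,m}_{\bullet,\bullet}} \;\cong\; \norm{\Delta^n_\bullet}\times\norm{\Delta^m_\bullet}.
\]
Each factor is contractible by Example \ref{ex:realization-of-simplicial-simplex}, so the product is contractible.

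For $\norm{\delta(\Delta^{n,m})_\bullet}$ I would imitate the extra-degeneracy argument of Example \ref{ex:realization-of-simplicial-simplex}. A $p$-simplex of $\delta(\Delta^{n,m})_\bullet$ is a pair $(\eta,\theta)$ with $\eta:[p]\to[n]$, $\theta:[p]\to[m]$ monotone. Define
\[
h_{p+1}:\delta(\Delta^{n,m})_p\lra \delta(\Delta^{n,m})_{p+1},\qquad (\eta,\theta)\longmapsto(\eta',\theta'),
\]
where $\eta'(i)=\eta(i)$, $\theta'(i)=\theta(i)$ for $i\le p$ and $\eta'(p{+}1)=n$, $\theta'(p{+}1)=m$. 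A direct verification (identical in form to the one in Example \ref{ex:realization-of-simplicial-simplex}, performed coordinatewise) gives $d_{p+1}h_{p+1}=\mathrm{id}$, $d_i h_{p+1}=h_p d_i$ for $0\le i\le p$, and $d_0 h_1$ is constantly the vertex $(n,m)\in\delta(\Delta^{n,m})_0$. Hence this is a semi-simplicial nullhomotopy from $\mathrm{id}$ to the constant map at $(n,m)$, and Lemma \ref{lem:semisimplicialcontraction} produces a contraction of $\norm{\delta(\Delta^{n,m})_\bullet}$.

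Finally, with both spaces contractible, the diagonal map $D:\norm{\delta(\Delta^{n,m})_\bullet}\to\norm{\Delta^{n,m}_{\bullet,\bullet}}$ is a map between contractible spaces and is therefore a weak homotopy equivalence, settling Theorem \ref{thm:eilenberg-zilber} for $X_{\bullet,\bullet}=\Delta^{n,m}_{\bullet,\bullet}$. There is essentially no main obstacle here: the only minor point to be careful about is invoking \eqref{geometric-realization-bisemisimplicial2} for the bi-simplicial (rather than bi-semi-simplicial) realisation, which is the one meant in the lemma statement since $\Delta^n_\bullet\otimes\Delta^m_\bullet$ is bi-simplicial and $\norm{\cdot}$ on simplicial objects was defined to be the fat realisation.
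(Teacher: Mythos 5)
Your proof is correct. The first half (using \eqref{geometric-realization-bisemisimplicial2} and Example \ref{ex:realization-of-simplicial-simplex} to see $\norm{\Delta^{n,m}_{\bullet,\bullet}}\cong\norm{\Delta^n_\bullet}\times\norm{\Delta^m_\bullet}\simeq *$) is exactly the paper's argument. For the contractibility of $\norm{\delta(\Delta^{n,m})_\bullet}$, the paper takes a slightly more conceptual route: it observes that $\delta(\Delta^{n,m})_\bullet$ is the nerve of the poset $[n]\times[m]$, which has a terminal object $(n,m)$, so the natural transformation to the constant functor and Lemma \ref{lem:NatTrans} immediately give contractibility. Your route instead writes down an explicit semi-simplicial nullhomotopy, extending the argument of Example \ref{ex:realization-of-simplicial-simplex} coordinatewise, and invokes Lemma \ref{lem:semisimplicialcontraction}. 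These are the same underlying homotopy unwound to different levels of explicitness (indeed Lemma \ref{lem:NatTrans} is itself proved via Lemma \ref{lem:semisimplicialhomotopy}); your version is more hands-on, the paper's makes the role of the terminal object visible. Both are complete and correct.
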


\begin{proof}
By \eqref{geometric-realization-bisemisimplicial2} and Example \ref{ex:realization-of-simplicial-simplex}, we have
\[
\norm{\Delta^{n,m}_{\bullet,\bullet}} \cong \norm{\Delta^n_\bullet} \times \norm{\Delta^m_\bullet} \simeq *.
\]
To prove that $\norm{\delta (\Delta^{n,m})_\bullet}\simeq *$, consider the ordered set $[n]$ as a (unital) category. Then $\Delta_\bullet^n$ is the nerve of $[n]$. Moreover, $\delta (\Delta^{n,m})_\bullet$ is the nerve of the category $[n] \times [m]$. This category has a terminal object, namely $(n,m)$, so a natural transformation from the identity functor to a constant functor. It follows from Lemma \ref{lem:NatTrans} that $\norm{\delta (\Delta [n,m])_\bullet}$ is contractible.
\end{proof}

It is in this step that the degeneracies are used. The analogous claim for bi-semi-simplicial sets is false. The role of $\Delta^{n,m}_{\bullet,\bullet}$ is then taken by $\nabla_{\bullet,\bullet}^{n,m}:=\nabla^n_\bullet \otimes \nabla_\bullet^m$. While $\norm{\nabla_{\bullet,\bullet}^{n,m}}$ is contractible, $\norm{\delta (\nabla^{n,m})_\bullet}$ usually is not. This may be seen by calculating the Euler number of these finite complexes. 

The identity $\id_{[n]}$ defines an element $\iota_n \in \Delta^n_n$ and its characteristic map $\widehat{\iota_n}: \Delta^n \to \norm{\Delta^n_\bullet}^{(n)} \subset \norm{\Delta^n_\bullet}$. The restriction to the topological boundary $\partial \Delta^n$ goes into the $(n-1)$-skeleton $\norm{\Delta^n_\bullet}^{(n-1)}$ and is denoted $\partial \widehat{\iota_n}$. 
In a similar vein, the tautological element $\iota_{n,m}=(\iota_n, \iota_m) \in \Delta^{n,m}_{n,m}$ induces a map $\widehat{\iota_{n,m}}: \Delta^n \times \Delta^m \to \norm{\Delta^{n,m}_{\bullet,\bullet}}^{n+m}$ with boundary $\partial \widehat{\iota_{n,m}} : (\Delta^n \times \partial \Delta^m \cup \partial \Delta^n \times \Delta^m)=: \partial (\Delta^n \times \Delta^m) \to \norm{\Delta^{n,m}_{\bullet,\bullet}}^{n+m-1}$. Moreover, composition with the diagonal map $d:\Delta^n \to \Delta^n \times \Delta^n$ (whose restriction to $\partial \Delta^n$ goes into $\partial (\Delta^n \times \Delta^n)$) defines a map $\widehat{\iota_{n,n}} \circ d: \Delta^n \to \norm{\delta(\Delta^{n,n})_\bullet}^{(n)}$, with boundary map $\partial (\widehat{\iota_{n,n}} \circ d): \partial \Delta^n \to \norm{\delta(\Delta^{n,n})_\bullet}^{(n-1)}$. 

Note that $X_{\bullet,\bullet} \mapsto \norm{\delta (X)_\bullet}$ and $X_{\bullet,\bullet} \mapsto \norm{X_{\bullet,\bullet}}$ are functors from the category of bi-simplicial sets to $\Top$ and the diagonal map $D$ is a natural transformation. Moreover, both $\norm{\delta (X)_\bullet}$ and $\norm{X_{\bullet,\bullet}}$ are naturally filtered spaces, their $0$-skeleta are equal:
\[
 \norm{\delta (X)_\bullet}^{(0)} =\norm{X_{\bullet,\bullet}}^{(0)}= X_{0,0},
\]
and $D$ restricts to the identity between the $0$-skeleta. 

\begin{lem}\label{eilenberg-zilber-proof2}\mbox{}
\begin{enumerate}[(i)]
\item There is a natural map $F: \norm{X_{\bullet,\bullet}} \to \norm{\delta (X)_\bullet} $ which is the identity on the $0$-skeleton.
\item The map $D \circ F: \norm{X_{\bullet,\bullet}} \to \norm{X_{\bullet,\bullet}}$ is naturally homotopic to the identity.
\item The map $F \circ D: \norm{\delta (X)_{\bullet}} \to \norm{\delta (X)_{\bullet}}$ is naturally homotopic to the identity.
\end{enumerate}
In particular, $D$ is a homotopy equivalence, for each bi-simplicial set.
\end{lem}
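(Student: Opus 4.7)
The plan is to follow the method of acyclic models, using the contractibility of the bi-simplicial models established in Lemma \ref{eilenberg-zilber-proof1} as the source of the required extensions. The map $F$ and the homotopies in (ii) and (iii) will all be built naturally in $X_{\bullet,\bullet}$, from which the conclusion that $D$ is a homotopy equivalence is formal.

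First I would construct natural continuous maps
\[
\tilde F_{p,q} : \Delta^p \times \Delta^q \lra \norm{\delta(\Delta^{p,q})_\bullet}
\]
by induction on $n = p+q$, compatible with all face and degeneracy maps of the bi-simplicial model $\Delta^{p,q}_{\bullet,\bullet}$ in both simplicial directions. The base case $\tilde F_{0,0}$ is the identity on a point. For the inductive step, the compatibility conditions together with the previously defined $\tilde F_{p',q'}$ for $p'+q' < n$ determine $\tilde F_{p,q}$ on the bi-simplicial boundary $\partial \Delta^p \times \Delta^q \cup \Delta^p \times \partial \Delta^q$, sending it into the $(n-1)$-skeleton of $\norm{\delta(\Delta^{p,q})_\bullet}$. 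Since this target is contractible by Lemma \ref{eilenberg-zilber-proof1}, the boundary map extends to all of $\Delta^p \times \Delta^q$. For a general bi-simplicial set $X$, each cell of $\norm{X_{\bullet,\bullet}}$ is the image of the characteristic map of some $\sigma \in X_{p,q}$, which is classified by a bi-simplicial map $\sigma_\# : \Delta^{p,q}_{\bullet,\bullet} \to X_{\bullet,\bullet}$. One then sets $F$ on that cell to be $\norm{\delta(\sigma_\#)_\bullet} \circ \tilde F_{p,q}$. The built-in compatibilities ensure that these definitions glue to a well-defined continuous map $F : \norm{X_{\bullet,\bullet}} \to \norm{\delta(X)_\bullet}$, natural in $X$, and since $\tilde F_{0,0} = \id$ it agrees with the identity on the common $0$-skeleton $X_{0,0}$.

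For (ii) and (iii) the same inductive technique produces natural homotopies. For (ii), build
\[
\tilde H_{p,q} : [0,1] \times \Delta^p \times \Delta^q \lra \norm{\Delta^{p,q}_{\bullet,\bullet}}
\]
from $D \circ \tilde F_{p,q}$ to the tautological map $\widehat{\iota_{p,q}}$. By induction these are already pinned down on $[0,1] \times (\partial \Delta^p \times \Delta^q \cup \Delta^p \times \partial \Delta^q) \cup \{0,1\} \times \Delta^p \times \Delta^q$, and they extend to the prism because $\norm{\Delta^{p,q}_{\bullet,\bullet}}$ is contractible by Lemma \ref{eilenberg-zilber-proof1}. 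Transporting via the classifying maps $\sigma_\#$ then yields a natural homotopy $D \circ F \simeq \id_{\norm{X_{\bullet,\bullet}}}$. The argument for (iii) is entirely analogous, with target $\norm{\delta(\Delta^{p,q})_\bullet}$, which is likewise contractible.

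The main obstacle is the bookkeeping: one must ensure that $\tilde F_{p,q}$ and the homotopies intertwine \emph{all} structure maps of $\Delta^{p,q}_{\bullet,\bullet}$ simultaneously (face maps in both directions and, crucially, degeneracies), so that their transport via $\sigma_\#$ glues to a well-defined map on the quotient space $\norm{X_{\bullet,\bullet}}$. The role of the degeneracies is exactly where having bi-\emph{simplicial} rather than bi-semi-simplicial models is essential: the failure of Lemma \ref{eilenberg-zilber-proof1} in the bi-semi-simplicial case is precisely why Theorem \ref{thm:eilenberg-zilber} does not extend to that setting, as noted after its statement. Once the compatibilities are arranged, the extensions exist for free by contractibility of the models, and parts (i)--(iii) together immediately imply that $D$ is a homotopy equivalence for every bi-simplicial set, completing the proof of Theorem \ref{thm:eilenberg-zilber}.
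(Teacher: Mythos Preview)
Your approach is essentially the same acyclic-models argument as the paper's: build the map and homotopies first on the models $\Delta^{p,q}_{\bullet,\bullet}$ by induction on $p+q$, using the contractibility of Lemma \ref{eilenberg-zilber-proof1} to extend over cells, and then transport to an arbitrary $X_{\bullet,\bullet}$ via the Yoneda classifying maps. The paper phrases it as building $F_n^X$ directly on the $n$-skeleton of $\norm{X_{\bullet,\bullet}}$ using the pushout description, but this amounts to the same thing.

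There is, however, a conceptual confusion in your write-up. You say the $\tilde F_{p,q}$ must intertwine ``face maps in both directions and, \emph{crucially}, degeneracies'' so that the transported maps glue on $\norm{X_{\bullet,\bullet}}$. This is not right: the fat geometric realisation $\norm{X_{\bullet,\bullet}}$ is formed using only the face relations, so only \emph{face} compatibilities of the $\tilde F_{p,q}$ are needed for the glued map to be well-defined, and this is all the paper imposes. The role of simpliciality (as opposed to semi-simpliciality) is \emph{not} in the gluing but solely in Lemma \ref{eilenberg-zilber-proof1}: the contractibility of $\norm{\delta(\Delta^{p,q})_\bullet}$ uses that $\delta(\Delta^{p,q})_\bullet$ is the nerve of $[p]\times[q]$, which requires the degeneracies. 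Imposing degeneracy compatibilities on the $\tilde F_{p,q}$ is unnecessary, and attempting to do so would add constraints on the interior of $\Delta^p\times\Delta^q$ (not just its boundary), which are not obviously compatible with the extension step.
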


One can add the statements that the maps $F$ and $D$ are unique up to natural homotopy among those natural maps which are the identity on the $0$-skeleton. These statements will not enter the proof of Theorem \ref{thm:eilenberg-zilber} and so we do not prove them, but the method of proof can easily be adapted.

\begin{proof}
We shall construct the map $F$ and the homotopies inductively on skeleta. More precisely, we shall construct natural maps 
\[
F_n =F_n^X : \norm{X_{\bullet,\bullet}}^{(n)} \lra \norm{\delta(X)_\bullet} 
\]
and natural homotopies
\[
h_n: F \circ D_n \leadsto \id, \; k_n : D \circ F_n \leadsto \id. 
\]
We begin with the construction of $F_n$. The map $F_0$ is the identity, and we assume that $F_0, \ldots, F_{n-1}$ are already constructed. Let $p+q =n$, and we first construct a suitable map $\mu_{p,q}: \norm{\Delta^{p,q}_{\bullet,\bullet}}^{(n)} \to \norm{\delta(\Delta^{p,q})_\bullet}$. The inclusion map $\norm{\Delta^{p,q}_{\bullet,\bullet}}^{(n-1)} \to \norm{\Delta^{p,q}_{\bullet,\bullet}}^{(n)}$ is a cellular inclusion. By Lemma \ref{eilenberg-zilber-proof1}, the space $\norm{\delta (\Delta^{p,q})_\bullet}$ is contractible. Hence there exists a solution $\mu_{p,q}$ to the extension problem
\[
 \xymatrix{
 \norm{\Delta^{p,q}_{\bullet,\bullet}}^{(n-1)}  \ar[r]^-{F_{n-1}^{\Delta^{p,q}}} \ar[d]  & \norm{\delta(\Delta^{p,q})_\bullet} \\
 \norm{\Delta^{p,q}_{\bullet,\bullet}}^{(n)} \ar@{..>}[ur]_{\mu_{p,q}}. &  
 }
\]
Now we construct $F_n^{X}$ for a bi-simplicial set $X$. Observe that
\[
 X_{p,q} = \bis \Set (\Delta_{\bullet,\bullet}^{p,q} ,X_{\bullet,\bullet}),
\]
the set of morphisms of bi-simplicial sets (this is an instance of the Yoneda lemma). For each $s\in X_{p,q}$, we have the characteristic map $\widehat{s}: \Delta^p \times \Delta^q \to \norm{X_{\bullet,\bullet}}^{p+q}$, and if we view $s$ as a map of bi-simplicial sets, we obtain $\norm{s}: \norm{\Delta^{p,q}_{\bullet,\bullet}} \to \norm{X_{\bullet,\bullet}}$. The relation between these two maps is that $\norm{s} \circ \widehat{\iota_{p,q}} = \widehat{s}$.
The following diagram is a pushout diagram
\[
 \xymatrix{
 \coprod\limits_{\substack{p+q=n\\  s \in X_{p,q}}} \partial \Delta^{p,q} \ar[r]^-{\varphi} \ar[d]^{\inc} & \norm{X_{\bullet,\bullet}}^{(n-1)} \ar[d]\\
 \coprod\limits_{\substack{p+q=n\\  s \in X_{p,q}}} \Delta^{p,q}  \ar[r]^-{\phi} & \norm{X_{\bullet,\bullet}}^{(n)},
 }
\]
where the map $\phi$ is 
\[
\phi=\coprod_{\substack{p+q=n\\  s \in X_{p,q}}} \widehat{s} = \coprod\limits_{\substack{p+q=n\\  s \in X_{p,q}}} \norm{s}^{(n)} \circ \widehat{\iota_{p,q}}
\]
and similarly
\[
\varphi = \coprod_{\substack{p+q=n \\  s \in X_{p,q}}} \norm{s}^{(n-1)} \circ \partial \widehat{\iota_{p,q}}.
\]
We claim that the two maps 
\[
F_{n-1}^{X_{\bullet,\bullet}} \circ \varphi, \coprod\limits_{\substack{p+q=n\\  s \in X_{p,q}}} \norm{\delta(s)_\bullet} \circ \mu_{p,q} \circ \widehat{\iota_{p,q}} \circ \inc:  \coprod\limits_{\substack{p+q=n\\  s \in X_{p,q}}} \norm{\Delta^{p,q}_{\bullet,\bullet}}^{(n-1)}  \lra \norm{\delta(X)_\bullet} 
\]
are equal, and so they induce a map from the pushout, i.e. $F_n^X: \norm{X_{\bullet,\bullet}}^{(n)} \to \norm{\delta (X)_\bullet}$, which finishes the inductive construction of $F_n$ (it is obvious that $F_n$ becomes a natural map). To verify this claim, it is enough to check that for each $s \in X_{p,q}$, the diagram
\[
\xymatrix{
\norm{\Delta^{p,q}_{\bullet,\bullet}}^{(n-1)} \ar[r]^{\norm{s}^{(n-1)}} \ar[d] & \norm{X_{\bullet,\bullet}}^{(n-1)} \ar[dr]^{F_{n-1}^X} & \\
\norm{\Delta^{p,q}_{\bullet,\bullet}}^{(n)} \ar[r]^{\mu_{p,q}} & \norm{\delta(\Delta^{p,q})_\bullet} \ar[r]^{\norm{\delta (s)}} & \norm{\delta(X)_\bullet}
}
\]
commutes. But this is clear because $F_{n-1}$ is a natural transformation:
\[
 F_{n-1}^X \circ \norm{s}^{(n-1)} = \norm{\delta(s)_\bullet} \circ F_{n-1}^{\Delta^{p,q}},
\]
and we constructed $\mu_{p,q}$ so that $\mu_{p,q} \circ \inc = F_{n-1}^{\Delta^{p,q}}$. This finishes the construction of $F$. 

Now we turn to the construction of natural homotopies $h_n: I \times \norm{\delta(X)_\bullet}^{(n)} \to \norm{\delta(X)_\bullet}$ from $F \circ D_n$ to the ``identity'' (i.e. inclusion map). We can take $h_0$ to be the constant homotopy. 
Assume that $h_0, \ldots, h_{n-1}$ are already constructed. As before, we first construct a certain map $\lambda_n : I \times \norm{\delta (\Delta^{n,n})_\bullet}^{(n)} \to \norm{\delta (\Delta^{n,n})_\bullet}$. The inclusion map 
\[
I \times  \norm{\delta(\Delta^{n,n})_\bullet}^{(n-1)} \cup \{0,1\} \times \norm{\delta(\Delta^{n,n})_\bullet}^{(n)} \lra I \times \norm{\delta(\Delta^{n,n})_\bullet}^{(n)}
\]
is a cellular inclusion. We define a map 
\[
 I \times  \norm{\delta(\Delta^{n,n})_\bullet}^{(n-1)} \cup \{0,1\} \times \norm{\delta(\Delta^{n,n})_\bullet}^{(n)} \lra \norm{\delta(\Delta^{n,n})_\bullet}
\]
by taking the homotopy $h_{n-1}^{\Delta^{n,n}}$ on the first part, $F \circ D_{n}$ on $\{0\} \times \norm{\delta(\Delta^{n,n})_\bullet}^{(n)}$ and the ``identity'' on $\{1\} \times \norm{\delta(\Delta^{n,n})_\bullet}^{(n)}$. Those fit together by assumption and so define a continuous map. It can be extended to a map
\[
 \lambda_n:  I \times \norm{\delta(\Delta^{n,n})_\bullet}^{(n)} \lra \norm{\delta(\Delta^{n,n})_\bullet},
\]
because the target space is contractible by Lemma \ref{eilenberg-zilber-proof1}. There is a pushout diagram
\[
\xymatrix{
\coprod\limits_{s \in X_{n,n}} I \times \partial \Delta^n \ar[r]^{\varphi} \ar[d]^{\inc} & I \times \norm{\delta(X)_\bullet}^{(n-1)} \ar[d] \\
\coprod\limits_{s \in X_{n,n}} I \times \Delta^n \ar[r]^{\phi} & I \times \norm{\delta(X)_\bullet}^{(n)}
}
\]
whose horizontal maps are given by 
\[
 \varphi = \id_I \times \left(\coprod_{s \in X_{n,n}} \norm{\delta (s)}^{(n-1)} \circ \partial  \widehat{\iota_n}\right)
\]
and
\[
 \phi=  \id_I \times \left(\coprod_{s \in X_{n,n}} \norm{\delta (s)}^{(n)} \circ \widehat{\iota_n}\right). 
\]
Let $\psi: \coprod\limits_{s \in X_{n,n}} I \times \Delta^n \to \norm{\delta(X)_\bullet}$ be the map 
\[
 \coprod_{s \in X_{n,n}} \norm{s} \circ \lambda_n \circ (\id_I \times \widehat{\iota_n}).
\]
Then $\psi \circ \inc = h_{n-1}^X \circ \varphi$ by construction, and so these maps together induce a map $h_n^X$ from the pushout $ I \times \norm{\delta(X)_\bullet}^{(n)}$ to $\norm{\delta(X)_\bullet}$ which extends $h_{n-1}^X$ and is natural. 

The construction of the homotopies $k_n$ is very similar and left to the reader. 
\end{proof}

\begin{proof}[Proof of Theorem \ref{thm:eilenberg-zilber}]
Consider the trisimplicial set $(p,q,r) \mapsto \Sing_r X_{p,q}$. The following diagram commutes:
\[
 \xymatrix{
 \norm{p \mapsto X_{p,p}} \ar[r]^{D} & \norm{(p,q) \mapsto X_{p,q}} \\
 \norm{ p \mapsto \norm{r \mapsto \Sing_r X_{p,p}}} \ar[r]^{D} \ar[u] \ar[d]^{\cong} & \norm{(p,q) \mapsto \norm{r \mapsto \Sing_r X_{p,q}}} \ar[u] \ar[d]^{\cong}\\
 \norm{r \mapsto \norm{p \mapsto \Sing_r X_{p,p}}} \ar[r]^{D} & \norm{r \mapsto \norm{(p,q) \mapsto \Sing_r X_{p,q}}}.
 }
\]
The upper vertical maps are weak equivalences, by Lemma \ref{lem:RealSing} and Theorem \ref{thm:levelwiseequivalence}. The lower vertical maps are the homeomorphisms from \eqref{geometric-realization-bisemisimplicial}. The bottom horizontal map is a weak equivalence by Lemma \ref{eilenberg-zilber-proof2} and by Theorem \ref{thm:levelwiseequivalence}. Hence so is the upper horizontal map, which proves the claim.
\end{proof}

\bibliographystyle{plain}
\bibliography{literature}

\end{document}